\newsavebox{\pullback}
\sbox\pullback{%
\begin{tikzpicture}%
\draw (0,0) -- (1ex,0ex);%
\draw (1ex,0ex) -- (1ex,1ex);%
\end{tikzpicture}}
\numberwithin{equation}{section}
\newtheorem{theorem}[equation]{Theorem}
\newtheorem{corollary}[equation]{Corollary}
\newtheorem{lemma}[equation]{Lemma}
\newtheorem{proposition}[equation]{Proposition}
\theoremstyle{definition}
\newtheorem{definition}[equation]{Definition}
\newtheorem{condition}[equation]{Condition}
\DeclareMathOperator{\Spec}{Spec}
\DeclareMathOperator{\Sspec}{\mathcal{S}pec}
\DeclareMathOperator{\Supp}{Supp}
\DeclareMathOperator{\Hom}{Hom}
\DeclareMathOperator{\Sch}{Sch}
\DeclareMathOperator{\Set}{Set}
\DeclareMathOperator{\Nat}{Nat}
\DeclareMathOperator{\Funct}{Funct}
\newcommand{\kk}{\Bbbk}%
\DeclareMathOperator{\Hilb}{Hilb}
\DeclareMathOperator{\Quot}{Quot}
\DeclareMathOperator{\Bilin}{Bilin}
\DeclareMathOperator{\GL}{GL}%
\DeclareMathOperator{\Sym}{Sym}%
\DeclareMathOperator{\PP}{\mathbb P}
\DeclareMathOperator{\Gr}{Gr}%
\newcommand{\mm}{\mathfrak{m}}%
\DeclareMathOperator{\pr}{pr}
\newcommand{\tM}{\widetilde{M}}
\newcommand{\tF}{\widetilde{F}}
\newcommand{\tK}{\widetilde{K}}
\newcommand{\tQ}{\widetilde{Q}}
\newcommand{\keps}{\Bbbk[\varepsilon]}
\newcommand{\Seps}{S[\varepsilon]}
\newcommand{\Z}{\mathbb Z}
\newcommand{\A}{\mathbb A}
\newcommand{\CC}{\mathbb C}
\newcommand{\U}{\mathcal{U}}
\newcommand{\I}{\mathcal{I}}
\newcommand{\Oo}{\mathcal{O}}
\newcommand{\Q}{\mathcal Q}
\newcommand{\B}{\mathcal B}
\newcommand{\T}{\mathcal{T}}
\newcommand{\M}{\mathcal{M}}
\newcommand{\Nn}{\mathcal{N}}
\newcommand{\E}{\mathcal{E}}
\newcommand{\W}{\mathcal{W}}
\newcommand{\Zz}{\mathcal{Z}}
\newcommand{\nn}{\mathfrak{n}}
\newcommand{\xrightarrowdbl}[2][]{%
  \xrightarrow[#1]{#2}\mathrel{\mkern-14mu}\rightarrow
}
\begin{document}

\title{The geometry and singularities of the Bilinear scheme}
\author{Weronika Obcowska}
\email{w.obcowska@student.uw.edu.pl}
\thanks{Supported by NCN grant number 2023/50/E/ST1/00336.}
\date{\today{}}
\maketitle

\begin{abstract}
    The goal is to study the geometry of the Bilinear scheme $\Bilin_{d_1,d_2,d_3}^{r_1, r_2}(\A^n)$ 
    introduced by Joachim Jelisiejew. 
    This functor can be viewed as a generalization of the Quot scheme, 
    giving the moduli space of bilinear maps of locally free modules. 
    We describe the relation to the Quot scheme by proving that the Bilinear functor can be realized 
    as a closed subfunctor of a product of Quot schemes, 
    hence the Bilinear functor is representable by a closed subscheme of the product of Quot schemes.  
    We use this result to compute the tangent space to the Bilinear scheme representing $\Bilin_{d_1,d_2,d_3}^{r_1, r_2}(\A^n)$. 
    We define two types of loci: the locus corresponding to tuples of points, 
    and the totally degenerate locus. 
    The first locus gives the main irreducible component of the Bilinear scheme. 
    We use the theory of minimal border rank tensors and secant varieties, and find that 
    $\Bilin_{d_1,d_2,d_3}^{r_1, r_2}(\A^n)$ is reducible for all $n$ whenever $r_i \geq d \geq 3$. 
    We describe the $\kk$-points of $\Bilin_{2,2,2}^{2, 2}(\A^1)$ in detail. 
\end{abstract}

\section{Introduction}
We are interested in the moduli space that arises in a natural way in the classification problem 
of concise minimal border rank tensors in $\CC ^m \otimes \CC ^m \otimes \CC ^m$. 
As explained in \cite{jlp}, this question remains open and turns out to be extremely difficult. 
The contributions of \cite{jlp, jjjj} solve the classification problem in the special case $m=5$. 
One of the tools introduced as a new invariant of a concise tensor is the 111-algebra structure. 
A tensor $\tau \in V_1 \otimes V_2 \otimes V_3$ with a 111-algebra $A$ 
induces a map $\tau \colon V_1 ^{\vee} \otimes V_2 ^{\vee} \rightarrow V_3$ that is $A$-bilinear in a canonical way. 
We want to study a moduli space parametrizing such bilinear maps, 
which motivates the construction of the \emph{Bilinear functor}. 
Following \cite[Problem XXXVIII]{jj}, we define the Bilinear functor 
\begin{equation*}
    \Bilin_{d_1,d_2,d_3}^{r_1, r_2} (\A^n) \colon \Sch_{/\Bbbk}^{op} \rightarrow \Set
\end{equation*}
on $\kk$-points as
\[
  \Bilin_{d_1,d_2,d_3}^{r_1, r_2}(\A^n)(\Bbbk) = 
  \left\{\begin{minipage}[c]{\widthof{\text{quotients of $S$-modules $S^{\oplus r_i} \xrightarrowdbl[]{p_i} M_i = \sfrac{S^{\oplus r_i}}{K_i}$ for $i=1,2$}}}
    \centering
    \text{\text{quotients of $S$-modules $S^{\oplus r_i} \xrightarrowdbl[]{p_i} M_i = \sfrac{S^{\oplus r_i}}{K_i}$ for $i=1,2$ }}\\
    \text{together with a surjection $M_1 \otimes_S M_2 \xrightarrowdbl[]{\pi} M_3 = \sfrac{S^{\oplus r_1 r_2}}{K_3}$}\\
    \text{such that $\dim_{\Bbbk} M_j =d_j$ for $j=1,2,3$}
  \end{minipage}\right\}
\]
\noindent
where $S = \kk[x_1, \ldots, x_n]$, over an arbitrary infinite algebraically closed field $\kk$. 
This construction gives a generalization of the Hilbert and Quot schemes of points.  
We discuss the connection to the Quot scheme in more detail. 
To this end, we define a closed embedding into a product of Quot schemes 
\begin{align*}
    \Bilin_{d_1,d_2,d_3}^{r_1, r_2}(\A^n) \hookrightarrow \Quot_{d_1}^{r_1}(\A^n) \times \Quot_{d_2}^{r_2}(\A^n) \times \Quot_{d_3}^{r_1 r_2}(\A^n)
\end{align*}
\noindent
and use it to show that the Bilinear functor is representable, which means that now we can refer to it as the Bilinear scheme. 
We review the definition of tangent space to the Quot scheme and use this result to
compute the tangent space to the Bilinear scheme. 

For the Hilbert and Quot schemes, we describe the irreducible components corresponding to tuples of points. 
They are called the smoothable component of the Hilbert scheme and the principal component of the Quot scheme. 
Those components have been studied extensively due to their connection to other areas of mathematics. 
The smoothable component of the Hilbert scheme has applications in combinatorics \cite{jb, haiman}, 
and the principal component of the Quot scheme arises naturally in the study of the variety of commuting matrices \cite{jjks}. 
We generalize those constructions to the setting of $\Bilin_{d,d,d}^{r_1, r_2}(\A^n)$ 
and define its main irreducible component as follows. 
For $p_1 \colon S^{\oplus r_1} \twoheadrightarrow M_1$, assume that $M_1$ corresponds to a tuple of points. 
Then there must be isomorphisms $M_2 \simeq M_1$ and $M_3 \simeq M_1$, 
so all three $S$-modules correspond to the same tuple of points. 
Moreover, we find that the third map $\pi \colon M_1 \otimes_S M_2 \twoheadrightarrow M_3$ must be a uniquely defined isomorphism. 
In order to compute the dimension of the main component, 
we study certain open subschemes of the principal component of the Quot scheme. 
We show that a subscheme of this form is isomorphic 
to a vector bundle over the smoothable component of the Hilbert scheme. 
Using this result, we prove that the main component of the Bilinear scheme is of dimension $nd + (r_1 -1)d + (r_2 -1)d$.  

We claim that the Bilinear scheme is reducible in general and prove it by describing the locus that must lie outside of the main component. 
For $\Quot_{d}^{r}(\A^n)$, define the totally degenerate locus as corresponding to modules of the form $(\sfrac{S}{\mm})^{\oplus d} $, 
for a fixed maximal ideal $\mm \lhd S$. 
We show that it gives a closed subscheme $\Zz_{\mm}$ 
and generalize this to the totally degenerate locus of $\Bilin_{d,d,d}^{r_1, r_2}(\A^n)$ defined as follows. 
Assume that we have surjections $p_i \colon S^{\oplus r_i} \twoheadrightarrow (\sfrac{S}{\mm})^{\oplus d}$ for $i=1,2$, 
then the third map is of the form $\pi \colon (\sfrac{S}{\mm})^{\oplus d^2} \twoheadrightarrow (\sfrac{S}{\mm})^{\oplus d}$. 
We prove that the dimension of this locus is $ (r_1-d)d + (r_2 -d)d + (d^2 - d)d$. 

In general, the main component and the totally degenerate locus have different dimensions. In particular, 
the totally degenerate locus cannot be contained in the main component whenever 
\begin{equation*}
    nd + (r_1 -1)d + (r_2 -1)d < (r_1-d)d + (r_2 -d)d + (d^2 - d)d,
\end{equation*}
which translates to the dimension of the totally degenerate locus being greater than that of the main component. 
This gives examples of parameters $r_i,d,n$ such that 
$\Bilin_{d,d,d}^{r_1, r_2}(\A^n)$ is reducible, which is the case for $n < d^2 - 3d +2$. 

In order to determine if $\Bilin_{d,d,d}^{r_1, r_2}(\A^n)$ is irreducible in the remaining cases, 
we relate it to the secant variety of the Segre embedding. 
Let $[p_1,p_2,\pi] \in \Bilin_{d,d,d}^{r_1, r_2}(\A^n)(\kk)$, 
where $\pi \colon M_1 \otimes_S M_2 \twoheadrightarrow M_3$. 
This map defines an $M_3$-concise tensor $\mu_{\pi} \in M_1^{\vee} \otimes M_2^{\vee} \otimes M_3$. 
Tensors coming from points of the main irreducible component of $\Bilin_{d,d,d}^{r_1, r_2}(\A^n)$ 
are of border rank $d$, which means that they arise as 
limits of multiplication tensors of algebras corresponding to tuples of $d$ points. 
Secant varieties give a geometric interpretation of border rank: 
the affine cone over the $d$-th secant variety $\sigma_d (\PP ^{m-1} \times \PP ^{m-1} \times \PP ^{m-1})$ 
parametrizes tensors from $\kk^m \times \kk^m \times \kk^m$ that are of border rank at most $d$. 
It follows that the main irreducible component of $\Bilin_{d,d,d}^{r_1, r_2}(\A^n)$ 
gives tensors corresponding to points of the $d$-th secant variety 
$\sigma_d (\PP ^{d-1} \times \PP ^{d-1} \times \PP ^{d-1}) \subseteq \PP (\kk^d \otimes \kk^d \otimes \kk^d)$. 
Irreducibility of $\Bilin_{d,d,d}^{r_1, r_2}(\A^n)$ is equivalent to 
$\sigma_d (\PP ^{d-1} \times \PP ^{d-1} \times \PP ^{d-1})$ being the whole ambient space, 
which is the case only for $d \leq 2$. 
This allows us to find all parameters such that $\Bilin_{d,d,d}^{r_1, r_2}(\A^n)$ is irreducible. 
In particular, $\Bilin_{d,d,d}^{r_1, r_2}(\A^n)$ is reducible for all $n$ if $r_i \geq d \geq 3$. 

We give a detailed description of the special case of two points $\Bilin_{2,2,2}^{2, 2}(\A^1) $, 
our scheme is irreducible. 
Let $S = \kk[x]$. 
A point $[p_1,p_2, \pi] \in \Bilin_{2,2,2}^{2, 2}(\A^1)(\kk)$ is given by 
$p_i \colon S^{\oplus 2} \twoheadrightarrow M_i$, for $i=1,2$, 
together with $\pi \colon M_1 \otimes M_2 \twoheadrightarrow M_3$
such that $\dim_{\kk} M_j = 2$ for $j=1,2,3$. 
If an $S$-module $M$ with $\dim_{\kk} M = 2$ is cyclic then it must satisfy 
$M \simeq \sfrac{S}{((x-c_1)(x-c_2))}$ or $M \simeq \sfrac{S}{(x-c)^2}$
for $c,c_1,c_2 \in \kk$ such that $c_1 \neq c_2$. 
Otherwise, if $M$ is not cylic, then $M \simeq (\sfrac{S}{(x-c)})^{\oplus 2}$ for $c \in \kk$. 
It suffices to consider cases $\sfrac{S}{(x(x-1))} $, $\sfrac{S}{(x)^2} $ 
and $(\sfrac{S}{(x)})^{\oplus 2}$. 
We start by discussing the cases of cyclic modules. 
Assume $M_1 \simeq \sfrac{S}{(x(x-1))}$, so that we get a point from the main component. 
Then also $M_2 \simeq M_3 \simeq \sfrac{S}{(x(x-1))}$ and the map 
$\pi \colon M_1 \otimes M_2 \twoheadrightarrow M_3$ gives a concise tensor of rank 2. 
This tensor is isomorphic to the multiplication tensor of $\sfrac{S}{(x(x-1))} $.
Now for $i=1,2$ assume $M_i \simeq \sfrac{S}{(x)^2}$. We find that $M_3 \simeq \sfrac{S}{(x)^2}$ and the map 
$\pi \colon M_1 \otimes M_2 \twoheadrightarrow M_3$ gives a concise tensor of rank 3, 
the multiplication tensor of $\sfrac{S}{(x)^2} $. 
By expressing it as a limit of tensors from the previous case we show that its border rank is 2. 
Next we consider modules that are not necessarily cyclic. 
Let $M_1 \simeq \sfrac{S}{I}$. 
We only need to discuss $p_1 \colon S^{\oplus 2} \twoheadrightarrow M_1 \simeq \sfrac{S}{(x)^2}$. 
Then either $M_2 \simeq \sfrac{S}{(x)^2}$ or $M_2 \simeq (\sfrac{S}{(x)})^{\oplus 2}$. 
Suppose that the latter holds. Then the map 
$\pi \colon M_1 \otimes M_2 \twoheadrightarrow M_3$ gives a non-concise tensor of rank 2, 
the multiplication tensor of the $S$-module $(\sfrac{S}{(x)})^{\oplus 2}$. 
This tensor fails to be concise on the first coordinate. 
Likewise, for $p_1 \colon S^{\oplus 2} \twoheadrightarrow M_1 \simeq (\sfrac{S}{(x)})^{\oplus 2}$ 
and $p_2 \colon S^{\oplus 2} \twoheadrightarrow M_2 \simeq \sfrac{S}{(x)^2}$, 
the third map gives a rank 2 tensor that fails to be concise on the second coordinate. 
Finally, we consider the last case, points from the totally degenerate locus. 
Let $[p_1, p_2, \pi]$ be given by $p_i \colon S^{\oplus 2} \twoheadrightarrow M_i \simeq (\sfrac{S}{(x)})^{\oplus 2}$, for $i=1,2$. 
Then $\pi \colon M_1 \otimes M_2 \simeq (\sfrac{S}{(x)})^{\oplus 4} \twoheadrightarrow M_3 \simeq (\sfrac{S}{(x)})^{\oplus 2}$ 
is defined by a full rank $4 \times 2$ matrix. 
No new types of tensors can arise this way and, 
by irreducibility,  
all tensors that arise can be expressed as limits of concise tensors of rank 2.

\section*{Acknowledgements}

I would like to thank my advisor, Joachim Jelisiejew, 
for the support, inspiration and guidance I received in the process of writing my thesis. 
I am also grateful to my friend, Jakub Jagiełła, 
for numerous discussions that played a crucial role in developing my intuition for tensors.

\section{Preliminaries}

We will use the following notation and adhere to the following conventions. 
We are working over an infinite algebraically closed field $\Bbbk = \Bar{\Bbbk}$. 
For a $\Bbbk$-scheme $X$, write $\A^n_X = \A^n_{\Bbbk} \times _{\Bbbk} X$. 
When the scheme $X$ is affine, say $X= \Spec A$, we will write $\A^n_A = \A^n_{\Spec A}$. 
We will write $\A^n$ for $\A_{\Bbbk}^n = \Spec S$, 
where $S = \Bbbk [ x_1, \dots, x_n ]$. 
For a ring $A$, we will write $S_A = S \otimes_{\Bbbk} A$. 
For an ideal $I$ in the ring $A$, we will write $I \lhd A$.
For an $A$-module $M$, we will write $M^{\sim}$ for the quasicoherent sheaf determined by $M$ on $\Spec A$. 
For the category of $\Bbbk$-schemes, we will write $\Sch_{/ \Bbbk}$ 
and for the category of sets, we will write $\Set$. 
Let $X, \, Y$ be $\Bbbk$-schemes. We will write $X \times Y = X \times_{\Bbbk} Y$ 
for the product over $\Spec \Bbbk$ 
and $\Hom(X, \, Y) = \Hom_{\Sch_{/ \Bbbk}}(X, \, Y)$ for the morphisms over $\Spec \Bbbk$. 
Let $\mathcal{C}, \, \mathcal{D}$ be categories.  
We will write $\Funct(\mathcal{C}^{op}, \, \mathcal{D})$ for the category 
of contravariant functors between $\mathcal{C}$ and $\mathcal{D}$. 
Let $F, \, G \colon \mathcal{C} \rightarrow \mathcal{D}$ be functors.  
We will write $\Nat(F, \, G)$ for the natural transformations between $F$ and $G$.

\subsection{Subfunctors}\label{ref:secsubfunct}

Every scheme $X$ determines a contravariant functor 
$\underline{X} \colon \Sch_{/ \Bbbk}^{op} \rightarrow \Set$ 
defined as the set of morphisms $\underline{X}(Y) = \Hom (Y, \, X)$. 
Assigning the functor of points to the scheme determines a covariant functor 
$\Sch_{/ \Bbbk} \rightarrow \Funct( \Sch_{/ \Bbbk}^{op}, \, \Set)$. 

\begin{theorem}[Yoneda Lemma]\label{r:thmyoneda}
    Let $F \colon \Sch_{/ \Bbbk}^{op} \rightarrow \Set$ be a contravariant functor. 
    Let $X$ be a $\Bbbk$-scheme. 
    There is a bijection $\Nat(\underline{X}, \, F) \simeq F(X)$ natural in $X$. 
\end{theorem}
\noindent
This allows us to identify schemes with their functors of points. 
Moreover, we can view the category of $\Bbbk$-schemes as a full subcategory 
of the category of functors $\Funct( \Sch_{/ \Bbbk}^{op}, \, \Set)$. 

\begin{corollary}[Yoneda embedding]\label{r:thmyonedaemb}
    The covariant functor 
    $\Sch_{/ \Bbbk} \rightarrow \Funct( \Sch_{/ \Bbbk}^{op}, \, \Set)$ 
    given by $X \, \mapsto \, \underline{X}$ 
    is fully-faithful. 
\end{corollary}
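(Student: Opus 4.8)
The statement to prove is the Yoneda embedding corollary: that $X \mapsto \underline{X}$ is fully faithful. The plan is to deduce this directly from the Yoneda Lemma (Theorem~\ref{r:thmyoneda}), which has just been stated. Full faithfulness means that for any two $\kk$-schemes $X$ and $Y$, the map
\begin{equation*}
    \Hom_{\Sch_{/\kk}}(X,\,Y) \longrightarrow \Nat(\underline{X},\,\underline{Y}), \qquad f \mapsto \underline{f},
\end{equation*}
is a bijection, where $\underline{f}$ is the natural transformation given on an object $Z$ by post-composition with $f$. So the task reduces to exhibiting this bijection, and the natural move is to recognize it as a special case of the Yoneda bijection.

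First I would apply Theorem~\ref{r:thmyoneda} with the functor $F = \underline{Y}$: it gives a bijection $\Nat(\underline{X},\,\underline{Y}) \simeq \underline{Y}(X) = \Hom_{\Sch_{/\kk}}(X,\,Y)$, natural in $X$. So both sides have the same cardinality and are canonically identified; what remains is to check that the identification coming from Yoneda's Lemma is precisely the map $f \mapsto \underline{f}$ described above (i.e.\ that the abstract bijection is the "obvious" one), and that naturality in $X$ in the sense of Theorem~\ref{r:thmyoneda} upgrades to functoriality of $X \mapsto \underline{X}$. Unwinding the proof of Yoneda's Lemma, the bijection $\Nat(\underline{X},\,\underline{Y}) \to \underline{Y}(X)$ sends a natural transformation $\eta$ to $\eta_X(\id_X)$; running this backwards, $f \in \Hom(X,Y)$ corresponds to the natural transformation whose component at $Z$ sends $g \in \Hom(Z,X)$ to $f \circ g$, which is exactly $\underline{f}$. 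Hence the map $f \mapsto \underline{f}$ is a bijection.

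I would then add a short remark that this also shows the functor $X \mapsto \underline{X}$ is injective on isomorphism classes in the strong sense that $\underline{X} \simeq \underline{Y}$ forces $X \simeq Y$ via a canonical isomorphism, which is the content of "full subcategory" alluded to in the surrounding text. The main (and essentially only) obstacle is bookkeeping: one must be careful to verify that the Yoneda bijection is natural enough to respect composition, so that $\underline{g \circ f} = \underline{g} \circ \underline{f}$ and $\underline{\id_X} = \id_{\underline{X}}$, making $X \mapsto \underline{X}$ a genuine functor before one even speaks of full faithfulness; but this is routine diagram-chasing and follows from associativity of composition of morphisms. No hard mathematics is involved — the corollary is a formal consequence of the lemma, and the proof is a two-line invocation of Theorem~\ref{r:thmyoneda} together with the explicit description of the Yoneda map.
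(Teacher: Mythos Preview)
Your proposal is correct and follows the standard deduction of the Yoneda embedding from the Yoneda Lemma: apply Theorem~\ref{r:thmyoneda} with $F = \underline{Y}$ to get $\Nat(\underline{X},\underline{Y}) \simeq \underline{Y}(X) = \Hom(X,Y)$, then check the bijection is $f \mapsto \underline{f}$. The paper itself gives no proof of Corollary~\ref{r:thmyonedaemb}, treating it as an immediate consequence of Theorem~\ref{r:thmyoneda}; your write-up simply makes explicit the one-line argument the paper leaves to the reader, and the extra bookkeeping you mention (functoriality, $\underline{g\circ f} = \underline{g}\circ\underline{f}$) is routine and correctly identified as such.
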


\begin{definition}\label{r:defreprfunct}
    We say that a functor $F \colon \Sch_{/ \Bbbk}^{op} \rightarrow \Set$ 
    is \emph{representable} if there is a scheme $X$ together with 
    a natural isomorphism $\psi \colon \underline{X} \xrightarrow{\simeq} F$. 
    In this case we say that $F$ is representable by $X$. 
\end{definition}
\noindent
By Corollary \ref{r:thmyonedaemb}, the representing scheme $X$ is unique up to isomorphism. 
Assume that the isomorphism $\psi \colon \underline{X} \xrightarrow{\simeq} F$ exists, 
then it follows from Theorem \ref{r:thmyoneda} that there is a unique element $\xi \in F(X)$, 
called the \emph{universal element}, 
such that for every $f \colon Y \rightarrow X$, there is a commutative diagram 
\[\begin{tikzcd}
	{\underline{X}(X)} && {F(X)} && {\mathds{1}_{X}} & {\xi = \psi_{X}(\mathds{1}_X)} \\
	{\underline{X}(Y)} && {F(Y)} && {(Y \xrightarrow{f} X)} & {\psi_Y(f) = Ff(\xi)}
	\arrow["{\psi_{X}}", from=1-1, to=1-3]
	\arrow["{f^*}"', from=1-1, to=2-1]
	\arrow["Ff", from=1-3, to=2-3]
	\arrow[maps to, from=1-5, to=1-6]
	\arrow[maps to, from=1-5, to=2-5]
	\arrow[maps to, from=1-6, to=2-6]
	\arrow["{\psi_Y}", from=2-1, to=2-3]
	\arrow[maps to, from=2-5, to=2-6]
\end{tikzcd}\]
\noindent
It means that for every $Y$ and every element $c \in F(Y)$, 
there is a unique map $f \colon Y \rightarrow X$ such that 
$c$ can be obtained as the pullback of the universal element along $f$. 

Now we generalize the notions of open and closed subschemes to the setting of functors. 
As general references we use \cite{eh, str}.

\begin{definition}\label{r:defsubfunct}
    Let $G$ and $F$ be functors $Sch^{op} \rightarrow Set$. Then $G$ is a \emph{subfunctor} of $F$ if:
    \begin{enumerate}
        \item there is inclusion of sets $G(T) \subseteq F(T)$ for every scheme $T$,

        \item for every morphism of schemes $t \colon T' \rightarrow T$, 
        the induced map $G(t) \colon G(T) \rightarrow G(T')$ is the restriction of 
        $F(t) \colon F(T) \rightarrow F(T')$. 
    \end{enumerate}
    \noindent
    In other words, the inclusion $G \hookrightarrow F$ is required to be a natural transformation of functors.
\end{definition}
\noindent
Assume moreover that $Y \subseteq X$ is an open (resp. closed) subscheme. 
Let $f \colon T \rightarrow X$ be a morphism of schemes. 
Then the preimage $f^{-1}(Y)$ defines an open (resp. closed) subscheme 
$T \times_X Y$ of $T$, as in the following commutative diagram: 
\[\begin{tikzcd}
	{T \times_X Y} && Y \\
	\\
	T && X
	\arrow["{f'}", from=1-1, to=1-3]
	\arrow["\begin{array}{c} \text{open} \\ \text{(resp. closed)} \end{array}"', from=1-1, to=3-1]
	\arrow["{\iota'}"{description}, shift left=3, draw=none, from=1-1, to=3-1]
	\arrow["\begin{array}{c} \text{open} \\ \text{(resp. closed)} \end{array}", from=1-3, to=3-3]
	\arrow["\iota"{description}, shift right=3, draw=none, from=1-3, to=3-3]
	\arrow["f", from=3-1, to=3-3]
\end{tikzcd}\]
\noindent
Consider the corresponding functors of points. 
For a scheme $W$ we have the following fiber product of sets:
\[\begin{tikzcd}
	{\underline{T}(W) \times_{\underline{X}(W)} \underline{Y}(W)} && {\underline{Y}(W)} \\
	\\
	{\underline{T}(W)} && {\underline{X}(W)}
	\arrow[from=1-1, to=1-3]
	\arrow[from=1-1, to=3-1]
	\arrow["{\iota_W}", from=1-3, to=3-3]
	\arrow["{f_W}", from=3-1, to=3-3]
\end{tikzcd}\]
\noindent
This can be viewed as the special case of the fiber product of functors. 
\begin{definition}[{\cite[Definition VI-4]{eh}}]
    Let $F, \, G, \, H$ be functors $\Sch^{op} \rightarrow \Set$. 
    Let $g \colon G \rightarrow F$ and $h \colon H \rightarrow F$ be natural transformations. 
    Then the \emph{fiber product of functors} $G, \, H$ over $F$ is a functor 
    $G \times_F H \colon \Sch^{op} \rightarrow \Set$ defined as 
    \begin{align*}
        \begin{split}
            (G \times_F H)(W) &  =  G(W) \times_{F(W)} H(W) \\
            & =  \left\{ (a,b) \in G(W) \times H(W) \mid g_W(a) = h_W(b) \text{ in } F(W) \right\}.
        \end{split}
    \end{align*}
\end{definition}
\noindent
In the special case of functors of points, the fiber product 
$\underline{T} \times_{\underline{X}} \underline{Y}$ is representable 
by the fiber product of schemes $T \times_X Y$. 
To see this, consider morphisms $g \colon W \rightarrow T$, $h \colon W \rightarrow Y$ 
such that $g \circ f = h \circ \iota$. 
By the universal property of pullback diagram, 
there is a unique morphism $\varphi_W \colon W \rightarrow T \times_X Y$ 
such that $h = \varphi \circ \iota'$ and $g = \varphi \circ \iota '$. 
This can be written in terms of functors of points in the following way: 
\begin{align*}
    \begin{split}
        \underline{T}(W) \times_{\underline{X}(W)} \underline{Y}(W) &  = \Hom(W, \, T) \times_{\Hom(W, \, X)} \Hom(W, \, Y) \\
        & \simeq \Hom(W, \, T \times_X Y) \, = \, \underline{(T \times_X Y)}(W).
    \end{split}
\end{align*}
The isomorphisms $\varphi_W$ are natural in $W$, so they give components of a natural transformation 
\begin{equation*}
    \varphi \colon \underline{T} \times_{\underline{X}} \underline{Y} \xrightarrow{\simeq} \underline{T \times_X Y}.
\end{equation*}
This can be generalized to arbitrary functors $F, G  \colon \Sch^{op} \rightarrow \Set$, 
which gives rise to the notion of open and closed subfunctors.
\begin{definition}\label{r:defopclsubf}
    Let $G$ be a subfunctor $F$. We say that it is an \emph{open} (resp. closed) \emph{subfunctor} 
    if for every scheme $T$ and every object $\xi \in G(T)$, 
    the induced fiber product of functors $\underline{T} \times_G F$ 
    is representable by an open (resp. closed) subscheme of $T$.
\end{definition}

\begin{proposition}\label{r:defopclsubfdef2}
    Let $G$ be a subfunctor of $F$. 
    Then $G$ is an open (resp. closed) subfunctor if the following holds. 
    For any scheme $T$ and $\xi \in F(T)$, there is 
    an open (resp. closed) subscheme $U \subseteq T$ such that 
    for any $f \colon T' \rightarrow T$ we have:
    \begin{equation*}
        \text{the element } f^{*} \xi \in F(T') \text{ is in } G(T') \
        \Longleftrightarrow \
        f \text{ factors through } U \subseteq T.
    \end{equation*}

\end{proposition}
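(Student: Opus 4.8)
The plan is to unwind Definition~\ref{r:defopclsubf} and exhibit the subscheme $U$ supplied by the factorization criterion as the one that represents the relevant fiber product of functors. Fix a scheme $T$ and an element $\xi\in F(T)$. By Theorem~\ref{r:thmyoneda}, $\xi$ corresponds to a natural transformation $\underline{T}\to F$, and together with the inclusion $G\hookrightarrow F$ this yields a fiber product $\underline{T}\times_F G$, which is a subfunctor of $\underline{T}$. According to Definition~\ref{r:defopclsubf} it is enough to prove that $\underline{T}\times_F G$ is representable by an open (resp.\ closed) subscheme of $T$; the candidate is the subscheme $U\subseteq T$ whose existence the hypothesis guarantees. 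So the target is a natural isomorphism $\underline{T}\times_F G\simeq\underline{U}$.

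First I would compute $(\underline{T}\times_F G)(W)$ for an arbitrary test scheme $W$. Its elements are the pairs $(f,b)$ with $f\in\Hom(W,T)$, $b\in G(W)$, and $f^{*}\xi=b$ in $F(W)$. Since $G$ is a subfunctor of $F$, the transition map $G(W)\to F(W)$ is an inclusion of sets, so $b$ is determined by $f$ as $b=f^{*}\xi$, and such a $b$ exists in $G(W)$ precisely when $f^{*}\xi\in G(W)$. Hence
\[
 (\underline{T}\times_F G)(W)\;=\;\{\,f\in\Hom(W,T)\ :\ f^{*}\xi\in G(W)\,\}.
\]
Now I would apply the hypothesis with $T'=W$: the right-hand side is exactly the set of morphisms $f\colon W\to T$ that factor through $U\subseteq T$.

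Next I would identify this set naturally with $\Hom(W,U)=\underline{U}(W)$. As $U\subseteq T$ is an open (resp.\ closed) subscheme, the inclusion $j\colon U\hookrightarrow T$ is a monomorphism, so a morphism $f$ factoring through $U$ does so through a \emph{unique} $g\colon W\to U$, while conversely every $g\colon W\to U$ gives $j\circ g$, which factors through $U$. Thus $g\mapsto j\circ g$ is a bijection $\underline{U}(W)\xrightarrow{\ \simeq\ }(\underline{T}\times_F G)(W)$, and it is natural in $W$ because precomposition with any $W'\to W$ commutes with composing with $j$. This gives the natural isomorphism $\underline{U}\simeq\underline{T}\times_F G$, so $\underline{T}\times_F G$ is representable by $U$, an open (resp.\ closed) subscheme of $T$. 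Since $T$ and $\xi\in F(T)$ were arbitrary, $G$ is an open (resp.\ closed) subfunctor of $F$.

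I do not expect a genuine obstacle: the proof is a chain of definition-chases. The only point that needs attention is the middle step, where the subfunctor property of $G\subseteq F$ is invoked twice — once to force the second coordinate of a point of $\underline{T}\times_F G$ to equal $f^{*}\xi$, and once, implicitly through $G(W)\subseteq F(W)$, to make the condition $f^{*}\xi\in G(W)$ meaningful. It is also essential that the factorization criterion is postulated for every $\xi\in F(T)$ and not only for $\xi\in G(T)$, since Definition~\ref{r:defopclsubf} ranges over all such $\xi$; the weaker hypothesis would not close the argument. (The converse implication, not needed here, is immediate: if $U$ represents $\underline{T}\times_F G$ then the isomorphism $\underline{U}\simeq\underline{T}\times_F G$ evaluated on morphisms $f\colon T'\to T$ returns the factorization criterion.)
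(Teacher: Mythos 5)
Your proof is correct and matches the route the paper intends: the paper states this proposition without proof, but its follow-up remark (``the fiber product from Definition \ref{r:defopclsubf} is representable by $U$'') is exactly the identification $\underline{T}\times_F G\simeq\underline{U}$ that you carry out in detail. Your parenthetical observation that the hypothesis must range over all $\xi\in F(T)$ (not just $\xi\in G(T)$, as Definition \ref{r:defopclsubf} literally reads) is a fair catch of what appears to be a typo in the paper.
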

\noindent
As a corollary we have the following result that we will use without explicit reference. 

\begin{proposition}
    Let $G$ be an open (resp. closed) subfunctor of $F$ such that 
    $F$ is representable by the scheme $X$. 
    Then $G$ is representable by an open (resp. closed) subscheme of $X$. 
\end{proposition}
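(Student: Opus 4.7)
The plan is to combine the Yoneda identification of $F$ with $\underline{X}$ together with the pointwise characterization of open/closed subfunctors given in Proposition \ref{r:defopclsubfdef2}, and then read off a representing subscheme from the universal element.

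First, since $F$ is representable by $X$, the Yoneda Lemma (Theorem \ref{r:thmyoneda}) provides a natural isomorphism $\psi \colon \underline{X} \xrightarrow{\simeq} F$ together with a universal element $\xi = \psi_X(\id_X) \in F(X)$, characterized by the property that for every scheme $T$ and every morphism $f \colon T \to X$, we have $\psi_T(f) = (Ff)(\xi) \in F(T)$, which we will denote $f^{*}\xi$. I would then apply Proposition \ref{r:defopclsubfdef2} to the universal element $\xi \in F(X)$, which produces an open (resp.\ closed) subscheme $\iota \colon U \hookrightarrow X$ such that for every morphism $f \colon T \to X$,
\[
  f^{*}\xi \in G(T) \ \Longleftrightarrow \ f \text{ factors through } \iota.
\]

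The claim is that the restriction of $\psi$ realizes $G$ as $\underline{U}$. Concretely, I define $\varphi \colon \underline{U} \to G$ by $\varphi_T(h) = \psi_T(\iota \circ h) = (\iota \circ h)^{*}\xi$ for $h \in \underline{U}(T) = \Hom(T, U)$; by the forward implication above, this indeed lands in $G(T)$. To construct an inverse, take any $\eta \in G(T) \subseteq F(T)$; by the Yoneda isomorphism, there is a unique $f \colon T \to X$ with $f^{*}\xi = \eta$, and since $\eta \in G(T)$, the backward implication above yields that $f$ factors uniquely through $U$ as $f = \iota \circ h$, where uniqueness of $h$ follows either from $\iota$ being a monomorphism (for both the open and closed case) or, equivalently, from the universal property characterizing the factorization in Proposition \ref{r:defopclsubfdef2}. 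Setting $\varphi_T^{-1}(\eta) = h$ gives a two-sided inverse to $\varphi_T$, and naturality in $T$ is immediate from the naturality of $\psi$ and functoriality of $\Hom(-, U)$.

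I do not expect a genuine obstacle: the argument is essentially bookkeeping after applying Proposition \ref{r:defopclsubfdef2} to the universal element. The only point that deserves care is the uniqueness of the factorization $f = \iota \circ h$, which is what upgrades the bijection $G(T) \leftrightarrow \{\, f \colon T \to X \text{ factoring through } U\,\}$ into a bijection $G(T) \leftrightarrow \underline{U}(T)$; this is handled by noting that both open and closed immersions are monomorphisms in $\Sch_{/\kk}$.
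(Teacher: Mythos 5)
Your argument is correct and is essentially the route the paper intends (the paper gives no explicit proof, but its remark that ``the fiber product from Definition \ref{r:defopclsubf} is representable by $U$'' is exactly your construction applied to the universal element). One small caveat: Proposition \ref{r:defopclsubfdef2} is stated only as a \emph{sufficient} criterion (``$G$ is an open subfunctor \emph{if} \dots''), so it does not literally ``produce'' the subscheme $U$; to get $U$ you should instead invoke Definition \ref{r:defopclsubf} directly with $T = X$ and the universal element $\xi$, where the fiber product $\underline{X} \times_F G$ is canonically isomorphic to $G$ because $\underline{X} \to F$ is an isomorphism -- after that, your explicit two-sided inverse and the monomorphism remark for uniqueness of the factorization go through verbatim.
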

\noindent
In the setting of Proposition \ref{r:defopclsubfdef2}, 
the fiber product from Definition \ref{r:defopclsubf} is representable by $U$. 
By \cite[Proposition VI-2]{eh}, it suffices to verify the conditions 
of Proposition \ref{r:defopclsubfdef2} for affine schemes. 
In the case of closed subfunctors we can rephrase it in the following way. 

\begin{proposition}\label{r:prpclsubfcriterion}
    Let $G$ be a subfunctor of $F$. 
    Then $G$ is a closed subfunctor if the following holds. 
    For any affine scheme $\Spec A$ and $\xi \in F(\Spec A)$ there is 
    an ideal $I \lhd A$ such that 
    for any morphism of affine schemes $f \colon \Spec B \rightarrow \Spec A$ we have:
    \begin{equation*}
        \text{the element } f^{*} \xi \in F(\Spec B) \text{ is in } G(\Spec B)
        \ \Longleftrightarrow \
        f \text{ factors through } \Spec(\sfrac{A}{I}) \subseteq \Spec A.
    \end{equation*}
\end{proposition}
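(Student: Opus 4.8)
The plan is to deduce this from Proposition~\ref{r:defopclsubfdef2} in the closed case. As recalled above, \cite[Proposition VI-2]{eh} permits us to verify the criterion of Proposition~\ref{r:defopclsubfdef2} only on affine test schemes, so it suffices to exhibit, for every affine $\Spec A$ and every $\xi \in F(\Spec A)$, a closed subscheme $U \subseteq \Spec A$ with the factorization property stated there. First I would invoke the bijection between ideals $I \lhd A$ and closed subschemes $\Spec(\sfrac{A}{I})$ of $\Spec A$: given the ideal $I$ supplied by the hypothesis, put $U = \Spec(\sfrac{A}{I})$.

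The second step is to match the two factorization conditions. For a morphism of affine schemes $f \colon \Spec B \to \Spec A$, corresponding to a ring homomorphism $\varphi \colon A \to B$, the morphism $f$ factors (necessarily uniquely) through the closed immersion $\Spec(\sfrac{A}{I}) \hookrightarrow \Spec A$ exactly when $\varphi(I) = 0$, i.e.\ $I \subseteq \ker\varphi$. Hence the biconditional appearing in Proposition~\ref{r:prpclsubfcriterion} is, word for word, the biconditional of Proposition~\ref{r:defopclsubfdef2} for the subscheme $U$, so the hypothesis here is precisely the assertion that $G$ satisfies the affine form of that criterion; the proposition on closed subfunctors then yields the claim.

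The only point I would take care to spell out is why it is enough to test on affine source schemes $\Spec B$, as in Proposition~\ref{r:prpclsubfcriterion}, rather than on all $f \colon T' \to \Spec A$ as in Proposition~\ref{r:defopclsubfdef2}. This is locality on the source: for arbitrary $f \colon T' \to \Spec A$, corresponding to $\varphi \colon A \to \Gamma(T',\OO_{T'})$, one has $\varphi(I) = 0$ iff this holds after restriction to each affine open of $T'$, by the sheaf axiom; on the functor side, if $f^{*}\xi \in G(T')$ then each restriction to an affine open $\Spec B \subseteq T'$ lies in $G(\Spec B)$ since $G \hookrightarrow F$ is a natural transformation, while if instead $f$ factors as $T' \xrightarrow{g} U \hookrightarrow \Spec A$ then the pullback of $\xi$ to $U$ lies in $G(U)$ — apply the hypothesis to the trivial factorization of the immersion $U \hookrightarrow \Spec A$ — and $G(g)$ carries it into $G(T')$. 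No real obstacle arises: the whole argument is a dictionary between ideals and closed subschemes, and the one slightly fiddly step is this last reconciliation of affine with general test schemes, which is exactly what \cite[Proposition VI-2]{eh} and the locality observation are for.
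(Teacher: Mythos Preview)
Your proposal is correct and follows precisely the approach the paper indicates: the paper does not give a formal proof but remarks, just before stating the proposition, that by \cite[Proposition VI-2]{eh} it suffices to verify the conditions of Proposition~\ref{r:defopclsubfdef2} on affine schemes, and that in the closed case this can be rephrased via ideals. Your write-up simply spells this out in detail, including the ideal/closed-subscheme dictionary and the locality argument for passing from affine to general test schemes.
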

\noindent
We will now describe two useful constructions that give 
a closed and an open subfunctor, respectively. 
We start with the \emph{functor of zeros}.
\begin{proposition}[The functor of zeros]\label{r:prpfunctofzeros}
    Let $Y$ be a scheme. Consider a morphism of locally free $\Oo_Y$-modules 
    $\varphi \colon \E_1 \rightarrow \E_2$. 
    Define $\Zz(\varphi) \colon \Sch^{op} \rightarrow \Set$, the functor of zeros of $\varphi$, as 
    \begin{equation*}
        \Zz(\varphi)(X) = \{ f \colon X \rightarrow Y \, \mid \, f^* \varphi \colon f^* \E_1 \rightarrow f^* \E_2 \text{ is the zero map} \}.
    \end{equation*}
    The functor $\Zz(\varphi)$ is representable by a closed subscheme of $Y$. 
\end{proposition}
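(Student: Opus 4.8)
The plan is to apply the closed subfunctor criterion of Proposition~\ref{r:prpclsubfcriterion}, reducing everything to the affine case and producing, for each $A$ and each $\xi \in \underline{Y}(\Spec A)$, an explicit ideal $I \lhd A$ cutting out the vanishing of $\xi^*\varphi$. Since $\Zz(\varphi)$ is visibly a subfunctor of $\underline{Y}$ — the condition ``$f^*\varphi = 0$'' is stable under further pullback because $(g\circ f)^*\varphi = g^*(f^*\varphi)$ — the only content is the representability of the associated fiber product by a closed subscheme.

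First I would fix an affine test scheme $\Spec A$ and an element $\xi \in \underline{Y}(\Spec A) = \Hom(\Spec A, Y)$, i.e.\ a morphism $g\colon \Spec A \to Y$. Pulling back $\varphi$ along $g$ gives a morphism of locally free $\OO_{\Spec A}$-modules $g^*\varphi \colon g^*\E_1 \to g^*\E_2$, equivalently a homomorphism of finitely generated projective $A$-modules $\psi \colon P_1 \to P_2$. The claim is that there is a smallest ideal $I \lhd A$ such that $\psi \otimes_A (A/I) = 0$, and that for any ring map $A \to B$ the base-changed map $\psi \otimes_A B$ vanishes iff $A \to B$ factors through $A/I$. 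To construct $I$: cover $\Spec A$ by finitely many distinguished opens $\Spec A_{f_j}$ on which both $P_1$ and $P_2$ become free, so $\psi$ is given over $A_{f_j}$ by a matrix with entries in $A_{f_j}$; let $I_j \lhd A_{f_j}$ be the ideal generated by those entries, and then check that the $I_j$ glue (their images agree on overlaps because the entry ideal of a matrix is independent of the chosen basis — it is the image of the map $\Hom(P_2, A)\otimes P_1 \to A$ induced by $\psi$) to a quasi-coherent ideal sheaf on $\Spec A$, hence to an honest ideal $I \lhd A$. Concretely and more cleanly, one can bypass the gluing by defining $I$ as the image of the $A$-linear map $P_2^{\vee} \otimes_A P_1 \xrightarrow{\;\mathrm{ev}\circ(\id\otimes\psi)\;} A$; this is globally defined since $P_1, P_2$ are projective, and it localizes correctly.

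Next I would verify the equivalence. Because formation of the image of a module map commutes with flat base change, and in fact the vanishing locus behaves well under arbitrary base change here, one has: for a ring map $\alpha\colon A \to B$, the map $\psi\otimes_A B$ is zero $\iff$ the image of $P_2^{\vee}\otimes P_1 \to B$ is zero $\iff$ $\alpha(I) = 0$ in $B$ $\iff$ $\alpha$ factors through $A/I$. One direction is immediate ($\psi \otimes (A/I) = 0$ by construction of $I$, so pulling back further along $A/I \to B$ keeps it zero); the other direction uses that projective modules are locally free, so vanishing of $\psi\otimes B$ can be checked on a cover where everything is a matrix, where it literally says every matrix entry maps to $0$ in $B$, i.e.\ $\alpha(I) = 0$. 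Assembling: by Proposition~\ref{r:prpclsubfcriterion}, $\Zz(\varphi)$ is a closed subfunctor of $\underline{Y}$, and since $\underline{Y}$ is representable, $\Zz(\varphi)$ is representable by a closed subscheme of $Y$.

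The main obstacle I anticipate is the globalization step: making sure the locally defined ``ideal of matrix entries'' is basis-independent and glues to a genuine ideal on $Y$ (not just on affine tests), and confirming that the relevant image-of-a-map construction commutes with the non-flat base changes $A \to B$ that appear in the criterion. I expect this to be handled cleanly by phrasing $I$ intrinsically via the map $\E_2^{\vee}\otimes\E_1 \to \OO_Y$ adjoint to $\varphi$ and invoking that taking the image (equivalently, cokernel of the inclusion of the kernel — but more simply, the scheme-theoretic image) of a morphism of quasi-coherent sheaves is compatible with pullback when the source is locally free; alternatively one reduces to the universal case $Y = \Spec A$ affine with $\E_i$ free, where the ideal is simply generated by the matrix entries of $\varphi$ and all claims are elementary.
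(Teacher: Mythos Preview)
Your proof is correct but takes a different route from the paper. The paper argues directly on $Y$: it first handles the case where $\E_1,\E_2$ are free (so $\varphi$ is a matrix and the vanishing of $f^*\varphi$ is cut out by the ideal of matrix entries), then covers $Y$ by trivializing opens $\{U_i\}$, shows that the functors $\Zz_i = \Zz(\varphi)\times_Y U_i$ form an open cover of $\Zz(\varphi)$, and glues the representing closed subschemes via Zariski descent (citing \cite[Theorem~VI-14]{eh}). You instead invoke Proposition~\ref{r:prpclsubfcriterion}: for each test affine $g\colon \Spec A \to Y$ you pull back to a map of projective $A$-modules and produce the ideal $I$ intrinsically as the image of $P_2^\vee \otimes_A P_1 \to A$, then verify the factorization criterion. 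Your approach is slicker in that the intrinsic definition of $I$ sidesteps basis-dependence and gluing entirely, and it makes direct use of the criterion the paper has already set up; the paper's approach is more hands-on and yields the closed subscheme of $Y$ explicitly as a glued object. One small remark: the ``main obstacle'' you flag---gluing the ideal to a sheaf on $Y$---is not actually needed in your own argument, since Proposition~\ref{r:prpclsubfcriterion} only asks for an ideal on each test affine $\Spec A$, not on $Y$; that globalization is precisely what the paper's descent step accomplishes and what your use of the criterion lets you avoid.
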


\begin{proof}
    We start by showing that the assertion holds in the special case of free sheaves. 
    Let $U \subseteq Y$ be an open subscheme such that 
    \begin{equation*}
        (\E_1)_{\vert_U} = \Oo_U^{\oplus N_1} \text{\quad and \quad} (\E_2)_{\vert_U} = \Oo_U^{\oplus N_2}.
    \end{equation*}
    \noindent
    Then $\varphi_{\vert_U} = (\varphi_{ij})_{i,j}$ is given by an $N_2 \times N_1$ matrix 
    and the pullback gives 
    \begin{equation*}
        f^* \varphi = (f^{\#}(\varphi_{ij}))_{i,j} \colon f^*(\E_1)_{\vert_{U}} \simeq \Oo_X^{\oplus N_1} \, \rightarrow \, f^* (\E_2)_{\vert_U} \simeq \Oo_X^{\oplus N_2},
    \end{equation*}
    \noindent
    where $f^{\#} \colon \Oo_U \rightarrow f_{*} \Oo_X$ is the induced map. 
    Then $f^* \varphi$ is the zero map if and only if the morphism $f$ factors 
    through the closed subscheme of $Y$ defined by the vanishing of 
    sections $\varphi_{ij}$, denoted $Z(\varphi_{ij})$. 
    Equivalently, the functor $\Zz(\varphi_{\vert_U})$ is representable by 
    the scheme $Z(\varphi_{ij})$. 

    Now we prove the claim in full generality by constructing a covering of $\Zz(\varphi)$ by open representable subfunctors. 
    Consider an open cover $\{ U_i \}$ of $Y$. 
    Let $\Zz_i = \Zz(\varphi) \times_Y U_i$ be the fiber product of functors. 
    For every scheme $X$ there is a commutative diagram 
    \[\begin{tikzcd}
	{X \times_Y U_i} & {X \times_{\Zz(\varphi)} \Zz_i} && {\Zz_i = \Zz(\varphi) \times_Y U_i} && {U_i} \\
	& X && {\Zz(\varphi)} && Y
	\arrow["\simeq", from=1-1, to=1-2]
	\arrow[from=1-2, to=1-4]
	\arrow[from=1-2, to=2-2]
	\arrow[from=1-4, to=1-6]
	\arrow[from=1-4, to=2-4]
	\arrow[from=1-6, to=2-6]
	\arrow[from=2-2, to=2-4]
	\arrow[from=2-4, to=2-6]
    \end{tikzcd}\]
    hence $\Zz_i$ is an open subfunctor of $\Zz(\varphi)$. 
    Let $\kk$ be a field, we will show that $\bigcup_i \Zz_i(\kk) = \Zz(\varphi)(\kk)$. 
    For each $i$ we have a diagram 
    \[\begin{tikzcd}
	{\Zz_i(\kk)} && {U_i(\kk)} \\
	{\Zz(\varphi)(\kk)} && {Y(\kk)}
	\arrow["{\pi_i}", from=1-1, to=1-3]
	\arrow["\rho"', from=1-1, to=2-1]
	\arrow["{\alpha_i}", from=1-3, to=2-3]
	\arrow["\beta", from=2-1, to=2-3]
    \end{tikzcd}\] 
    \noindent
    where 
    \begin{equation*}
        \Zz_i(\kk) = \left\{ (f, p) \in \Zz(\varphi)(\kk) \times U_i(\kk) \ \vert \ \alpha_i(f) = \beta(p) \right\}
    \end{equation*} 
    for $f \colon \Spec \kk \rightarrow U_i$ and $p \colon \Spec \kk \rightarrow X$ such that $p^* \varphi = 0$. 
    The map $\alpha_i$ is defined as the composition 
    $\alpha_i(f) = (\Spec \kk \xrightarrow{f} U_i \xhookrightarrow{\alpha_i} X)$. 
    Then
    \begin{equation*}
        \Zz(\varphi)(\kk) = \{ p \colon \Spec \kk \rightarrow X \, \mid \, p^* \varphi \colon (\E_1)_{\vert p} \rightarrow (\E_2)_{\vert p} \text{ is zero} \},
    \end{equation*}
    so $\beta$ is the natural inclusion. 
    The set $\Zz_i(\kk)$ can be viewed as those $\kk$-points $p \colon \Spec \kk \rightarrow X$ 
    that give zero maps on fibers $(\E_1)_{\vert p} \rightarrow (\E_2)_{\vert p}$ 
    and that are also in $U_i$. 
    Since $\{ U_i \}$ is an open cover of $X$, 
    every $p \in X(\kk)$ is in some $U_i(\kk)$, 
    hence $\bigcup_i \Zz_i(\kk) = \Zz(\varphi)(\kk)$. 
    It follows that the functors $\Zz_i$ form an open cover of $\Zz(\varphi)$. 
    From the special case of free sheaves it follows 
    that for a cover $\{ U_i \}$ of $X$ that trivializes the sheaves $\E_1, \ \E_2$, 
    the functors $\Zz_i$ are representable. 
    The functor $\Zz(\varphi)$ satisfies Zariski descent, 
    so by \cite[Theorem VI-14]{eh} the closed subschemes $Z_i \subseteq Y$ 
    glue to give a closed subscheme representing $\Zz(\varphi)$. 
\end{proof}
\noindent
We can describe the sheaf of ideals associated to the closed subscheme 
that represents the functor $\Zz(\varphi)$ from Proposition \ref{r:prpfunctofzeros}. 
We start with the special case of a morphism of free sheaves over an affine scheme. 
Let $Y = \Spec A$ and consider the map of free $A$-modules 
$\varphi \colon M = A^{\oplus m} \rightarrow N = A^{\oplus n}$ 
corresponding to the morphism of free sheaves $\varphi^{\sim} \colon M^{\sim} \rightarrow N^{\sim}$. 
As in the proof of Proposition \ref{r:prpfunctofzeros}, 
this map is given by an $n \times m$ matrix $(\varphi_{ij})$. 
Let $I(\varphi) $ denote the ideal generated by the entries of the matrix $(\varphi_{ij})$. 
Then 
\begin{align}\label{r:eqfitting}
    \begin{split}
        \Zz(\varphi)(X) & = \{ f \colon X \rightarrow \Spec A \, \mid \, f^* \varphi = 0 \} \\
        & \simeq \{ f^{\#} \colon A \rightarrow \Gamma(X, \Oo_X) \, \mid \, f^{\#}(\varphi_{ij}) = 0 \text{ for all } i,j \} \\
        & \simeq  \{ f^{\#} \colon A \rightarrow \Gamma(X, \Oo_X) \, \mid \, f^{\#} \text{ factors through } A \twoheadrightarrow \sfrac{A}{I(\varphi)} \} \\ 
        & \simeq \Hom( \sfrac{A}{I(\varphi)}, \, \Gamma(X, \Oo_X)) \\
        & \simeq \Hom(X, \, \Spec(\sfrac{A}{I(\varphi)})).
    \end{split}
\end{align}
\noindent
In the case of an arbitrary scheme $Y$ and 
a morphism of locally free $\Oo_X$-modules $\varphi \colon \M \rightarrow \Nn$, 
the isomorphisms in (\ref{r:eqfitting}) give an explicit description of 
the closed subscheme representing the functor $\Zz(\varphi)$. 
The construction is natural in $A$, so the ideals $I(\varphi)$ glue to 
give a quasicoherent sheaf of ideals $\I(\varphi)$ on $Y$ 
which we call the \emph{Fitting ideal} of $\varphi$. 
This notation is slightly simplified with respect to the literature: 
the full name is the 0-th Fitting ideal of $\varphi$. 
\begin{definition}\label{r:deffittingideal}
    Let $Y$ be a scheme. Let $\varphi : \mathcal{M} \xrightarrow{} \mathcal{N}$ be 
    a morphism of locally free sheaves of rank $m,n$, respectively.
    We define the \emph{Fitting ideal} on $Y$ as the quasicoherent sheaf of ideals $\I(\varphi)$  
    associated to the closed subscheme representing the functor $\Zz(\varphi)$. 
\end{definition}

We will now use Hilbert's Nullstellensatz to construct examples of open subfunctors. 
Let the functor $F \colon \Sch_{/\Bbbk}^{op} \rightarrow \Set$ 
be representable by a scheme $X$. 
If we require the scheme $X$ to have some additional properties, 
then to define an open subfunctor $G$ of $F$, it suffices to specify its $\kk$-points $G(\kk)$. 

\begin{lemma}\label{r:lemnullst}
    Let $X$ be a $\kk$-scheme locally of finite type. 
    Then for every open subset $U_{\kk}$ there is an open subscheme $U \subseteq X$ 
    such that $U(\kk) = U_{\kk}$. 
\end{lemma}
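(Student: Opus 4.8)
The plan is to reduce to the affine case by covering $X$ with open affine subschemes and then exploit the Nullstellensatz to match closed points of an affine $\kk$-scheme of finite type with its $\kk$-points. Concretely, first I would choose an affine open cover $X = \bigcup_\alpha \Spec A_\alpha$ where each $A_\alpha$ is a finitely generated $\kk$-algebra; this is possible since $X$ is locally of finite type over $\kk$. Since the property we want is local and we may take intersections with the $U_\alpha := U_\kk \cap (\Spec A_\alpha)(\kk)$, it suffices to produce, for each $\alpha$, an open subscheme $U^{(\alpha)} \subseteq \Spec A_\alpha$ with $U^{(\alpha)}(\kk) = U_\alpha$; the resulting open subschemes then agree on overlaps (because they agree on $\kk$-points, which are dense, and open subschemes of a scheme of finite type over $\kk$ are determined by their closed points) and glue to the desired $U \subseteq X$.

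So the heart of the matter is the affine case: $X = \Spec A$ with $A$ a finitely generated $\kk$-algebra, and $U_\kk \subseteq X(\kk)$ an open subset in the subspace topology from $X$. By definition of the subspace topology, $U_\kk = W \cap X(\kk)$ for some open $W \subseteq X$; take $U = W$. It remains to check $U(\kk) = U_\kk$, i.e.\ that $W(\kk) = W \cap X(\kk)$ — but this is immediate once we identify $X(\kk)$ with the set of closed points of $X$. That identification is exactly the content of Hilbert's Nullstellensatz over the algebraically closed field $\kk$: every maximal ideal of a finitely generated $\kk$-algebra has residue field $\kk$, so the closed points of $\Spec A$ are precisely the images of the $\kk$-points $\Spec \kk \to \Spec A$, and this bijection is a homeomorphism onto the subspace of closed points (the Zariski topology on $\Spec A$ restricted to maximal ideals agrees with the classical topology on $X(\kk) = \kk^n / V(I)$ when $A = \kk[x_1,\dots,x_n]/I$). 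Under this identification, an open subscheme $W$ satisfies $W(\kk) = W \cap X(\kk)$ tautologically, and conversely any open subset of $X(\kk)$ arises from a unique open subscheme because open subschemes of a Jacobson scheme are determined by their closed points.

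The main obstacle — really the only non-formal point — is this last determination statement: that the map $W \mapsto W(\kk)$ from open subschemes of $X$ to open subsets of $X(\kk)$ is a bijection, not merely a surjection. Surjectivity is the subspace-topology argument above; injectivity rests on the scheme $X$ being Jacobson (true for any scheme locally of finite type over a field), so that closed points are very dense and hence an open subset of $|X|$ is determined by which closed points it contains. For the gluing step I will need the analogous injectivity to know the local pieces $U^{(\alpha)}$ are consistent; again this follows from Jacobson-ness. I would cite a standard reference (e.g.\ \cite[Ch.~I]{eh} or the Stacks Project) for the Jacobson property and the identification of $\kk$-points with closed points, and keep the write-up to the reduction-to-affine step plus the one-line Nullstellensatz observation.
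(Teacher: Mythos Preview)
Your proposal is correct and follows essentially the same approach as the paper: reduce to the affine case by covering with finite-type affines, invoke the Nullstellensatz to get the bijection between open subsets of $\kk$-points and open subschemes, and glue. Your write-up is more explicit about the Jacobson property justifying injectivity of $W \mapsto W(\kk)$ and gluing consistency, which the paper's proof simply takes for granted.
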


\begin{proof}
    For every point $p_i$ of $U_{\kk}$ there is an open affine neighborhood 
    $U_i = \Spec (A_i) \subseteq X$ such that $A_i$ is a finitely generated $\kk$-algebra. 
    By Hilbert's Nullstellensatz there is a bijection between 
    open subsets of $U_i(\kk)$ and open subschemes of $U_i$. 
    The intersection $ U_i(\kk) \cap U_{\kk}$ is open in $U_i(\kk)$, 
    so there is an open subscheme $U_i' \subseteq U_i$ such that 
    $U_i'(\kk) = U_i(\kk) \cap U_{\kk}$. 
    The subschemes $\{ U_i' \}_i$ glue to give an open subscheme $U$ 
    such that $U_{\kk} = U(\kk) = \bigcup _i U_i'(\kk)$. 
\end{proof}

\begin{proposition}\label{r:prpkpointsext}
    Let $X$ be a $\kk$-scheme locally of finite type. 
    Take an open subset $Y_{\kk} \subseteq X(\kk)$. 
    Then there is a representable functor 
    $G \colon \Sch_{/\Bbbk}^{op} \rightarrow \Set$ such that 
    $G(\kk) = Y_{\kk}$. 
\end{proposition}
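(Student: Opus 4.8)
The plan is to reduce the statement immediately to Lemma \ref{r:lemnullst}. Since $X$ is locally of finite type over $\kk$, that lemma applied to the open subset $Y_\kk \subseteq X(\kk)$ produces an open subscheme $U \subseteq X$ with $U(\kk) = Y_\kk$, where $U(\kk)$ is regarded inside $X(\kk)$ via the open immersion $\iota \colon U \hookrightarrow X$. I would then simply set $G = \underline{U}$, the functor of points of $U$; this is representable by definition, with representing scheme $U$.

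It then remains to identify $G(\kk)$ with $Y_\kk$ as subsets of $X(\kk)$. By definition $G(\kk) = \underline{U}(\Spec \kk) = \Hom(\Spec\kk, U)$, and the open immersion $\iota$ identifies this with a subset of $\Hom(\Spec\kk, X) = X(\kk)$: a $\kk$-point $\Spec\kk \to X$ factors, necessarily uniquely, through $U$ if and only if its image point lies in the open subset $U \subseteq X$. By the choice of $U$ from Lemma \ref{r:lemnullst}, the $\kk$-points obtained this way are exactly those of $Y_\kk$, so $G(\kk) = Y_\kk$, as required.

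There is essentially no obstacle here: the statement is a formal consequence of Lemma \ref{r:lemnullst} together with the Yoneda description of schemes as their functors of points. The only point deserving a moment of care is the compatibility of the two identifications — checking that the open immersion $U \hookrightarrow X$ induces on $\kk$-points precisely the given inclusion $Y_\kk \subseteq X(\kk)$ — but this follows at once from the universal property of open immersions. One may additionally remark that $G = \underline{U}$ is an open subfunctor of $\underline{X}$ in the sense of Definition \ref{r:defopclsubf}, which is the natural functorial repackaging of the same fact, although this is not needed for the statement.
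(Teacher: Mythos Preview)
Your proposal is correct and follows essentially the same approach as the paper: apply Lemma \ref{r:lemnullst} to obtain an open subscheme $U \subseteq X$ with $U(\kk) = Y_\kk$, and take $G = \underline{U}$. The paper's proof is terser, but the content is identical, including the closing remark that $G$ is an open subfunctor of $\underline{X}$.
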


\begin{proof}
    Let $Y$ be the open subscheme such that $Y(\kk)= Y_{\kk}$, by Lemma \ref{r:lemnullst}.
    Let $G$ be the functor of points $\underline{Y}$, 
    then $Y(\kk)=Y_{\kk} = G(\kk)$. 
    This defines an open subfunctor of $F$. 
\end{proof}
\noindent
We will say that the open subset $Y_{\kk}$ extends to the functor $G$. 

\subsection{Tangent space}\label{r:sectangent}

We give the definition of the tangent space to a scheme that can be generalized to functors. 
Let $D = \keps = \sfrac{\Bbbk[t]}{t^2}$ be the ring of dual numbers and write 
$S[\varepsilon] = S \otimes_{\Bbbk} D$. 
The following definition comes from deformation theory. 

\begin{proposition}[{\cite[Chapter VI.1.3]{eh}}]\label{r:deftangent}
    Let $X$ be a $\Bbbk$-scheme, let $p \colon \Spec \Bbbk \rightarrow X$ be a $\Bbbk$-point. 
    Then the Zariski tangent space to $X$ at $p$ 
    consists of morphisms $\widetilde{p} \colon \Spec D \rightarrow X$ such that 
    the following diagram commutes: 

    \[\begin{tikzcd}
	{\Spec D} && X \\
	& {\Spec \Bbbk}
	\arrow["{\widetilde{p}}", from=1-1, to=1-3]
	\arrow["\alpha", from=2-2, to=1-1]
	\arrow["p"', from=2-2, to=1-3]
    \end{tikzcd}\]
\end{proposition}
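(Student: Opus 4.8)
The plan is to verify that the prescription given here agrees with the classical Zariski tangent space, namely $T_pX = \Hom_\Bbbk(\mm_p/\mm_p^2,\Bbbk)$, where $\mm_p \lhd \Oo_{X,p}$ is the maximal ideal; note that its residue field is $\Bbbk$ precisely because $p$ is a $\Bbbk$-point. I would proceed in three steps: reduce the computation of $\widetilde p$ to an affine local one, unwind $\widetilde p$ into a derivation, and then identify derivations valued in the residue field with linear functionals on $\mm_p/\mm_p^2$.

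\emph{Reduction and translation to derivations.} The ring $D$ is local Artinian, so $\Spec D$ is a one-point scheme and $\Spec\Bbbk \hookrightarrow \Spec D$ is a homeomorphism onto it. Hence any $\widetilde p$ fitting into the displayed diagram has set-theoretic image $\{p\}$, and therefore factors through any affine open $\Spec A \subseteq X$ containing $p$ (factoring topologically through an open immersion suffices to factor scheme-theoretically); here $p$ corresponds to a $\Bbbk$-algebra map $p^{\#}\colon A \to \Bbbk$. So the set of admissible $\widetilde p$ is the set of $\Bbbk$-algebra homomorphisms $\phi\colon A \to D$ reducing modulo the nilpotent to $p^{\#}$. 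Writing $\phi(a) = p^{\#}(a) + \varepsilon\,\delta(a)$, the map $\delta\colon A \to \Bbbk$ is $\Bbbk$-linear and vanishes on $\Bbbk\cdot 1$; expanding $\phi(ab) = \phi(a)\phi(b)$, using $\varepsilon^2=0$ and multiplicativity of $p^{\#}$, one sees that $\phi$ preserves multiplication if and only if $\delta$ satisfies the Leibniz rule $\delta(ab) = p^{\#}(a)\delta(b) + p^{\#}(b)\delta(a)$, i.e. $\delta \in \mathrm{Der}_\Bbbk(A,\Bbbk_p)$, where $\Bbbk_p$ denotes $\Bbbk$ with the $A$-module structure through $p^{\#}$ (the condition $\phi(1)=1$ is then automatic). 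Thus the set of $\widetilde p$ is canonically the $\Bbbk$-vector space $\mathrm{Der}_\Bbbk(A,\Bbbk_p)$.

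\emph{Identification with $\Hom_\Bbbk(\mm_p/\mm_p^2,\Bbbk)$.} Put $\mm = \ker p^{\#}$. By Leibniz, every $\delta \in \mathrm{Der}_\Bbbk(A,\Bbbk_p)$ kills $\Bbbk\cdot 1$ and $\mm^2$ (if $f,g\in\mm$ then $p^{\#}(f)=p^{\#}(g)=0$), so it factors through a $\Bbbk$-linear map $\mm/\mm^2 \to \Bbbk$. Conversely, the $\Bbbk$-vector space splitting $A = \Bbbk\cdot 1 \oplus \mm$, available because $p$ is a $\Bbbk$-point, lets one extend any $\ell\colon \mm/\mm^2 \to \Bbbk$ to a derivation that is zero on $\Bbbk\cdot 1$ and equals $\ell$ composed with $\mm\twoheadrightarrow\mm/\mm^2$ on $\mm$, and one checks Leibniz on components. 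These constructions are mutually inverse, so $\mathrm{Der}_\Bbbk(A,\Bbbk_p) \cong \Hom_\Bbbk(\mm/\mm^2,\Bbbk)$; since $\mm/\mm^2$ is unchanged by localizing $A$ at $\mm$, this equals $\Hom_\Bbbk(\mm_p/\mm_p^2,\Bbbk)$. The identification is independent of the chosen affine neighbourhood because $\mm_p/\mm_p^2$ is intrinsic to $\Oo_{X,p}$, and naturality in $X$ holds since every step is functorial in $X$.

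\emph{Main obstacle.} There is no substantive difficulty here; the care needed is purely in the bookkeeping — justifying that $\widetilde p$ may be read off inside an affine neighbourhood of $p$ (this is the one-point topology of $\Spec D$) and that the resulting bijection is independent of that choice, and keeping the $A$-module structure on $\Bbbk$ through $p^{\#}$ straight when invoking the splitting $A = \Bbbk\cdot 1 \oplus \mm$.
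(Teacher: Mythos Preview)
Your argument is correct and is the standard one. Note, however, that the paper does not prove this proposition at all: it is stated with a citation to \cite[Chapter VI.1.3]{eh} and treated as a known fact, with the surrounding text only rephrasing the conclusion in functor-of-points language. So there is nothing in the paper to compare your approach against; you have simply supplied the omitted textbook proof.
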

\noindent
From the perspective of functors of points, 
the tangent space to $\underline{X}$ at $p \in \underline{X}(\Bbbk)$ 
consists of $\widetilde{p} \in \underline{X}(D)$ such that 
the pullback $\alpha^{*} \colon \underline{X}(D) \rightarrow \underline{X}(\Bbbk)$ 
sends $\widetilde{p}$ to $p$. 

\subsection{Irreducibility}

We recall the notion of dimension at a point, which we will use while discussing 
irreducibility of $\Bilin_{d_1,d_2,d_3}^{r_1, r_2}(\A^n)$. 

\begin{definition}
    Let $X$ be a topological space. 
    Define the dimension of $X$ at a point $p \in X$ as 
    \begin{equation}
        \dim_p X = \min \, \{ \dim U \mid U \subseteq X \text{ is an open neighborhood of } p \},
    \end{equation}
    \noindent
    where the minimum is taken over all open neighborhoods of $p$. 
\end{definition}

\begin{lemma}\label{r:lemirreddim}
    Let $X$ be a topological space. 
    If $X$ is irreducible then $\dim_p X$ is the same at every $p \in X$. 
\end{lemma}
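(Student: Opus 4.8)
The plan is to deduce the lemma from a single statement about open subsets: for an irreducible $X$ of the type occurring in this paper (the underlying space of a scheme of finite type over $\kk$), every nonempty open $U \subseteq X$ satisfies $\dim U = \dim X$. Granting this, the lemma is immediate, because the minimum defining $\dim_p X$ ranges over a nonempty family of nonempty open subsets of $X$, each of dimension $\dim X$; hence $\dim_p X = \dim X$, which does not depend on $p$.

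To establish $\dim U = \dim X$ I would first dispose of the easy inequality $\dim U \le \dim X$: given a chain $T_0 \subsetneq \cdots \subsetneq T_k$ of irreducible closed subsets of $U$, the closures $\overline{T_0} \subsetneq \cdots \subsetneq \overline{T_k}$ in $X$ form a chain of irreducible closed subsets, and they remain distinct because $\overline{T_i} \cap U = T_i$. For the reverse inequality I would reduce to an affine $X = \Spec A$ with $A$ a finitely generated $\kk$-domain, pick a basic open $D(f) \subseteq U$, and use that $A_f$ is again a finitely generated $\kk$-domain with $\operatorname{Frac}(A_f) = \operatorname{Frac}(A)$, so that $\dim D(f) = \dim A_f = \operatorname{trdeg}_\kk \operatorname{Frac}(A_f) = \operatorname{trdeg}_\kk \operatorname{Frac}(A) = \dim A = \dim X$; squeezing $\dim D(f) \le \dim U \le \dim X$ then forces $\dim U = \dim X$. (Alternatively one can argue more topologically: intersect a chain of irreducible closed subsets of $X$ of length $\dim X$ whose smallest member meets $U$ with $U$, noting that each intersection is dense and irreducible in the corresponding $Z_i$, so distinctness is preserved; producing such a chain again uses that $X$ is Jacobson and that all maximal chains have length $\dim X$.)

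The main obstacle is precisely this last step, which is genuinely not a purely topological fact: for an arbitrary irreducible space a nonempty open subset can have strictly smaller dimension, and then $\dim_p X$ is not constant — already for the two-point space with one open point and one closed point, $\dim_p X$ equals $0$ at the generic point and $1$ at the closed point. So the argument must use that the spaces occurring here are of finite type over $\kk$ (equivalently, one could simply impose the hypothesis that every nonempty open subset of $X$ has the same dimension as $X$, which is exactly what the argument produces). Once that input is available, everything else is routine bookkeeping with chains of irreducible closed subsets and their closures.
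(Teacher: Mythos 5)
Your meta-observation is the crux: the lemma as literally stated for an arbitrary topological space is \emph{false}, and your counterexample is correct. Take $X = \{a,b\}$ with $\{a\}$ open (so $b$ is closed and $a$ is the generic point); then $X$ is irreducible of dimension $1$, yet $\dim_a X = 0$ (witnessed by the open $\{a\}$) while $\dim_b X = 1$ (the only open neighborhood of $b$ is $X$).

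The paper's proof proceeds purely topologically and runs into exactly this wall. At the decisive step it selects an open neighborhood $U_1$ of $p$ achieving $\dim U_1 = \dim_p V_1$ and then claims ``$U_1$ is also an open neighborhood of $q$,'' summarizing with ``every open neighborhood of $p$ is also an open neighborhood of $q$.'' That is simply not true for distinct points of a general topological space --- in the example above $\{a\}$ is a neighborhood of $a$ but not of $b$. Irreducibility only guarantees that two nonempty opens \emph{meet}, not that a neighborhood of $p$ contains $q$. So the paper's own proof has a real gap, and since the statement fails in full topological generality, no purely topological argument can close it.

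Your repair is the right one: restrict to the underlying spaces of irreducible $\kk$-schemes of finite type (which is how the lemma is used throughout the paper) and invoke that every nonempty open subset of such a space has the same dimension as $X$. Your proof of that auxiliary fact is correct: $\dim U \leq \dim X$ is purely topological (closures of a strict chain of irreducible closed subsets of $U$ stay strict because $\overline{T_i} \cap U = T_i$), while $\dim U \geq \dim X$ genuinely needs the finite-type hypothesis, via $\dim A_f = \operatorname{trdeg}_\kk \operatorname{Frac}(A_f) = \operatorname{trdeg}_\kk \operatorname{Frac}(A) = \dim A$ on an affine chart $\Spec A$. Once this is in hand, $\dim_p X$ is a minimum over a family of opens all of dimension $\dim X$, hence is $\dim X$ for every $p$. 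In short, you took a genuinely different route from the paper, and yours is the one that actually proves what the paper needs; the lemma ought to carry the hypothesis that $X$ is (the underlying space of) an irreducible $\kk$-scheme of finite type.
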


\begin{proof}
    Assume that $X$ is irreducible. 
    Let $V_1, V_2$ be any two non-empty open subsets of $X$. 
    Then $V_1 \cap V_2 \neq \varnothing$ by assumption. 
    Take a point $p \in V_1 \cap V_2$, it must be $\dim_p V_1 = \dim_p V_2$. 
    Suppose there is a point $q \in V_1$ such that $\dim_q V_1 \geq \dim_p V_1$. 
    By assumption, every open neighborhood of $q$ intersects every open neighborhood of $p$. 
    Take a neighborhood $U_1 \in p$ with $\dim U_1 = \dim _p V_1$, 
    then $U_1$ is also an open neighborhood of $q$, so 
    $\dim_q V_1 \leq \dim U_1 = \dim_p V_1$, hence $\dim_q V_1 = \dim_p V_1$. 
    This reasoning shows that $\dim _p X = \dim _q X$ for all $p,q \in X$, 
    since every open neighborhood of $p$ is also an open neighborhood of $q$. 
\end{proof}
\noindent
Suppose we claim that a topological space $X$ is reducible. 
In order to prove this, we can try to find an open subset $U \subseteq X$ 
and a closed subset $Z \subseteq X$ such that $\dim U < \dim Z$. 
By Lemma \ref{r:lemirreddim}, this shows that $\overline{U} \cap Z = \varnothing$. 
This will be our strategy for the proof of reducibility of $\Bilin_{d_1,d_2,d_3}^{r_1, r_2}(\A^n)$. 

\subsection{Hilbert scheme}\label{r:sechilb}

Consider a functor $\Hilb_d (\A^n) \colon \Sch_{/\Bbbk}^{op} \rightarrow \Set$ 
defined on $\Bbbk$-points as 
\begin{equation*}
    \Hilb_d(\A^n)(\Bbbk) = \{ I \lhd S \mid \dim _{\Bbbk} \sfrac{S}{I} = d \}.
\end{equation*}
\noindent
For an arbitrary $\Bbbk$-scheme $X$ the Hilbert functor is defined as 
    \begin{equation*}
        \Hilb_d(\A^n)(X) = \{ \text{closed subschemes } Z \subseteq X \times_{\Bbbk} \A^n \mid \text{ $\Oo_Z$ is a locally free $\Oo_X$-module of rank $d$} \}. 
    \end{equation*}
\noindent
This gives rise to the Hilbert functor of $d$ points in the affine $n$-space. 
It is representable by a scheme $Hilb$, called the Hilbert scheme. 
Let $\psi \colon \underline{Hilb} \rightarrow \Hilb_d(\A^n)$ 
be the natural isomorphism and 
let $[\eta]$ denote the universal element, the image of 
$\psi_{Hilb}(\mathds{1}_{Hilb})$ in $\Hilb_d(\A^n)(Hilb)$. 
We will write 
\begin{equation}\label{r:equnivhilb}
    [\eta] = [\Zz \xhookrightarrow{\eta} Hilb \times \A^n].
\end{equation}
\noindent
The universal element $[\eta]$ corresponds to the closed subscheme 
$\mathcal{Z} \xhookrightarrow{\eta} Hilb \times \A^n$ such that $\Oo_{\Zz}$ is locally free of rank $d$ over $Hilb$ 
and such that for every scheme $X$, every element of $\Hilb_d(\A^n)(X)$ 
is obtained as the pullback of $\mathcal{Z}$ by some unique 
$f \colon X \rightarrow Hilb$. 
This is encoded by the following commutative diagram:
\[\begin{tikzcd}
	{\mathcal{Z} \times_{\kk} X} && {X \times \A_{\Bbbk}^n} \\
	{\mathcal{Z}} && {Hilb \times \A_{\Bbbk}^n}
	\arrow[hook, from=1-1, to=1-3]
	\arrow[from=1-1, to=2-1]
	\arrow["f \times \mathds{1}_{\A^n}", from=1-3, to=2-3]
	\arrow["{\eta}", hook, from=2-1, to=2-3]
\end{tikzcd}\]
\noindent
If $X = \Spec A$ for a $\Bbbk$-algebra $A$, 
we write $\Hilb_d(\A^n)(A)$ for $\Hilb_d(\A^n)(\Spec A)$. 
In that case we have 
\begin{align*}
    \begin{split}
        \Hilb_d(\A^n)(A) & = \{ Z \subseteq \Spec A \times \A^n \, \mid \, \text{ $\Oo_Z$ is a locally free $\Oo_{\Spec A}$-module of rank $d$} \} \\
        & \simeq \{ I \lhd S_A \, \mid \, \sfrac{S_A}{I} \text{ is locally free $A$-module of rank $d$}\}.
    \end{split}
\end{align*}

We say that a closed subscheme $Z \subseteq \A^n$ corresponds to a tuple of $d$ points 
if it consists of $d$ distinct $\Bbbk$-points of $\A^n$. 
If $Z = \Spec(\sfrac{S}{I})$ is a tuple of points, then 
\begin{equation*}
    \Oo_Z \simeq \sfrac{S}{I} \simeq \sfrac{S}{\I( \{ p_1, \dots, p_d \} )} \simeq \Bbbk^d,
\end{equation*}
\noindent
where $p_i \in \A^n(\Bbbk)$ are distinct points. 
We denote the set of ideals $I \lhd S$ corresponding to tuples of points by $T_{\Bbbk}$ 
and think of it as a subset of $\Hilb_d(\A^n) (\Bbbk)$. 
This subset gives rise to an open subfunctor of the Hilbert scheme 
representable by an open subscheme $T$ of $Hilb$. 
The dimension of $T$ is $nd$. 

We can view $T_{\kk}$ as consisting of those 
$\Bbbk$-points $p \colon \Spec \Bbbk \rightarrow Hilb$ 
that give the isomorphism of $\Bbbk$-algebras
\begin{equation*}
(\alpha _{*} \Oo_Z)_{\vert p} \simeq \Bbbk ^d, \quad
\text{where } \alpha \colon Z \xhookrightarrow{\eta} Hilb \times \A^n \xrightarrow{pr_1} Hilb.
\end{equation*}
\noindent
The scheme $Hilb$ satisfies the assumptions of Proposition \ref{r:prpkpointsext} 
and $T_{\kk}$ is open in $\Hilb_d(\A^n)(\Bbbk)$.
It follows that $T_{\kk}$ induces a functor 
$\T \colon \Sch^{op}_{/ \Bbbk} \rightarrow \Set$ such that 
$\T(\kk)=T_{\kk}$ on $\Bbbk$-points. 
It is defined as the open subfunctor 
$\T \subseteq \Hilb_d(\A^n)$, $\T = \underline{T}$ for an open subscheme $T \subseteq Hilb$ such that $T(\kk) = T_{\kk}$. 
We define the \emph{smoothable component} of the Hilbert scheme 
as the closure of $T$ in $Hilb$. 
The pullback $[\eta'] = \iota^* [\eta]$ by the inclusion $\iota \colon \T \hookrightarrow Hilb$ 
gives the universal element of $\T$. 
It corresponds to a closed subscheme 
\begin{equation}\label{r:equnivt}
    \Zz_{T} \xhookrightarrow{\eta'} \A^n_{T}
\end{equation}
\noindent
such that the composition 
$\beta \colon \Zz_{T} \xhookrightarrow{\eta'} \A^n_{T} \xrightarrow{\pr_1} T$ 
makes $\Oo_{\Zz_T}$ locally free of rank $d$ over $\Oo_T$.

\subsection{Quot scheme}\label{r:secquot}

Consider a functor $\Quot_d ^r (\A^n) \colon \Sch_{/\Bbbk}^{op} \rightarrow \Set$ 
defined on $\Bbbk$-points as
\begin{align*}
    \Quot_d ^r(\A^n)(\Bbbk) = \{ \text{quotients of $S$-modules } S^{\oplus r} \xrightarrowdbl[]{\pi} M \mid \dim_{\Bbbk} M = d \}_{/ \sim}
\end{align*}
\noindent
where we identify surjections $\pi \sim \pi'$ if $\ker \pi = \ker \pi'$. 
Given an arbitrary $\Bbbk$-scheme $X$ we have
\[
  \Quot_{d}^{r}(\A^n)(X) = 
  \left\{\begin{minipage}[c]{\widthof{\text{ equivalence classes of quasicoherent quotients }}}
    \centering
    \text{\text{equivalence classes of quasicoherent quotients}}\\
    \text{$\Oo_X^{\oplus r} \otimes_{\Bbbk} S \xrightarrowdbl[]{\pi} \M$ of $(\Oo_X \otimes_{\Bbbk} S)$-modules}\\
    \text{such that $\M$ is locally free of rank $d$ over $\Oo_X$}
  \end{minipage}\right\}
\]
\noindent
This functor is representable by the scheme $Quot$, 
called the Quot scheme of points in the affine space. 
From the natural isomorphism $\psi \colon \underline{Quot} \rightarrow \Quot_d ^r(\A^n)$, 
we get the universal element 
\begin{align}\label{r:equnivquot}
    \begin{split}
        [\xi] & = \psi_{Quot}(\mathds{1}_{Quot}) \\
        & = \big[ \Oo_{Quot}^{\oplus r} \otimes_{\Bbbk} S \xrightarrowdbl[]{\xi} \M \big] \in \Quot_d ^r(\A^n)(Quot).
    \end{split}
\end{align}
\noindent
For every scheme $X$, every element of $\Quot_d ^r(\A^n)(X)$ 
can be obtained as the pullback of $[\xi]$ 
by some unique $f \colon X \rightarrow Quot$, 
which we define as
\begin{align*}
    f^* [\xi] = \big[ (f^* \Oo_{Quot}^{\oplus r}) \otimes_{\Bbbk} S \xrightarrowdbl[]{f^* \xi} f^* \M \big] \in \Quot_d ^r(\A^n)(X).
\end{align*}
\noindent
We write $\Quot_d ^r(\A^n)(A)$ for $\Quot_d ^r(\A^n) (\Spec A)$. 
In that case 
\begin{equation*}
    \Quot_d ^r(\A^n)(A) = \bigl\{ A^{\oplus r} \otimes S \simeq S_A^{\oplus r} \twoheadrightarrow M \mid M \text{ is locally free of rank $d$ as an $A$-module} \bigr\}_{/ \sim}.
\end{equation*}

Now we recall the characterization of the tangent space to the Quot scheme. 
Fix the following notation: 
\begin{align*}
    \begin{split}
        & [p] = \big[ F = S^{\oplus r} \xrightarrowdbl[]{p} M = \sfrac{F}{K} \big] \in \Quot_d ^r (\A^n)(\Bbbk), \\
        & [\widetilde{p}] = \big[ \tF = S[\varepsilon]^{\oplus r} \xrightarrowdbl[]{\widetilde{p}} \tM = \sfrac{\tF}{\tK} \big] \in \Quot_d ^r (\A^n)(\keps).
    \end{split}
\end{align*}
As in Proposition \ref{r:deftangent}, $\widetilde{p}$ determines a tangent vector at $p$ if 
$\widetilde{p}$ restricts to $p$ on $\Spec \Bbbk$:
\begin{align*}
    \begin{split}
        \alpha^* [\widetilde{p}] & =[p] \\
        & = \big[ \tF \otimes _{\keps} \Bbbk \xrightarrowdbl[]{\widetilde{p} \otimes 1} \tM \otimes_{\keps} \Bbbk \, \simeq \, \sfrac{\tF \otimes \Bbbk}{\tK \otimes \Bbbk} \big] 
    \end{split}
\end{align*}
\noindent
or in other words, if we have the isomorphisms $\sfrac{\tM}{\varepsilon \tM} \simeq M$ and $\sfrac{\tK}{\varepsilon \tK} \simeq K$. 
By definition, the points of $\Quot_d ^r(\A^n)(\keps)$ correspond to equivalence classes of surjections 
$[\tF \twoheadrightarrow \tM]$ such that $\tM$ is locally free of rank $d$ over $\keps$. 
The module $\tM$ can be equivalently characterized as free or as flat. 

\begin{theorem}\label{r:thmquottangent}
    The tangent space to $\Quot_d ^r(\A^n)$ at $[p]$ is isomorphic to $\Hom_S (K, \, \sfrac{F}{K})$.
\end{theorem}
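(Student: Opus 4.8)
The plan is to exhibit an explicit bijection between tangent vectors $[\widetilde p]\in\Quot_d^r(\A^n)(\keps)$ restricting to $[p]$ and $S$-module homomorphisms $K\to F/K$. First I would unwind what a lift $[\widetilde p]$ is: it is an equivalence class of surjections $\widetilde p\colon\tF=\Seps^{\oplus r}\twoheadrightarrow\tM$ with $\tM$ free of rank $d$ over $\keps$, together with the data $\tK=\ker\widetilde p$ satisfying $\tK/\varepsilon\tK\simeq K$ and $\tM/\varepsilon\tM\simeq M$. Since $\tM$ is $\keps$-flat, tensoring the exact sequence $0\to\tK\to\tF\to\tM\to 0$ with $\keps\xrightarrow{\varepsilon}\keps$ shows $\varepsilon\tF\cap\tK=\varepsilon\tK$, so the inclusion $\tK\hookrightarrow\tF$ induces an injection $\tK/\varepsilon\tK\hookrightarrow\tF/\varepsilon\tF=F$ whose image is exactly $K$ (using flatness again to identify $\im$). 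The key structural point is then: as a submodule of $\tF=F\oplus\varepsilon F$, the lift $\tK$ projects isomorphically onto $K$ under reduction mod $\varepsilon$, hence is the ``graph'' of an $S$-linear map $K\to F/\varepsilon F=F$ taken modulo the ambiguity $\varepsilon K$, i.e.\ of a map $\varphi\colon K\to F/K$.

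Concretely, I would argue as follows. Given a lift, pick for each $\kappa\in K$ an element $\widetilde\kappa\in\tK$ with $\widetilde\kappa\equiv\kappa\pmod\varepsilon$; write $\widetilde\kappa=\kappa+\varepsilon g(\kappa)$ with $g(\kappa)\in F$. The choice of $\widetilde\kappa$ is unique up to adding an element of $\varepsilon(\tK\cap\varepsilon\tF)=\varepsilon\cdot\varepsilon\tF=0$... more carefully, up to $\varepsilon\tK\cap\varepsilon\tF$, which reduces the indeterminacy in $g(\kappa)$ to $K$; so $\varphi(\kappa):=g(\kappa)\bmod K\in F/K$ is well defined. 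One checks $S$-linearity of $\varphi$ directly: $\widetilde{\kappa_1}+\widetilde{\kappa_2}$ and $s\widetilde\kappa$ are again elements of $\tK$ lifting $\kappa_1+\kappa_2$ and $s\kappa$, so $g$ is $S$-linear modulo $K$. Conversely, given $\varphi\colon K\to F/K$, lift it to $g\colon K\to F$ (no canonical choice, but different choices give the same $\tK$) and set $\tK:=\{\kappa+\varepsilon g(\kappa):\kappa\in K\}+\varepsilon K\subseteq\tF$; this is an $\Seps$-submodule, $\tM:=\tF/\tK$ is visibly free over $\keps$ with reduction $M$ (a $\keps$-basis is obtained by lifting an $S$-basis of a complement... here one uses that $F$ is free so $K\to F/K$ splits over $\Bbbk$-vector spaces, giving $\tM\cong M\oplus\varepsilon M$ as $\keps$-modules), and $\tK/\varepsilon\tK\simeq K$. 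These two constructions are mutually inverse, and independence of the auxiliary choices shows the correspondence descends to equivalence classes; additivity in $\varphi$ matches the $\Bbbk$-vector space structure on the tangent space coming from the standard $\keps$-algebra automorphisms $\varepsilon\mapsto t\varepsilon$ and the codiagonal on $\keps\times_\Bbbk\keps$, giving a $\Bbbk$-linear isomorphism.

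The main obstacle, and the step I would spend the most care on, is the \emph{flatness bookkeeping}: showing that an arbitrary $\keps$-flat lift $\tM$ of $M$ forces $\tK$ to be of the graph form above, in particular that $\varepsilon\tF\cap\tK=\varepsilon\tK$ and that reduction mod $\varepsilon$ maps $\tK$ onto $K$ (rather than a proper submodule) with kernel $\varepsilon\tK$. This is exactly the infinitesimal criterion for flatness over $\keps$: $\tM$ is $\keps$-flat iff $\Tor_1^{\keps}(\tM,\Bbbk)=0$ iff the sequence $0\to\varepsilon\tM\to\tM\to M\to 0$ has $\varepsilon\tM\simeq M$, equivalently multiplication by $\varepsilon$ on $\tM$ has kernel equal to $\varepsilon\tM$. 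I would isolate this as the one genuine input and then the rest is the (routine but slightly fiddly) identification of the resulting affine space of lifts with $\Hom_S(K,F/K)$. Note that freeness of $F$ itself is not needed for the graph description, only finiteness ensuring everything is finitely generated; it is used implicitly in knowing $\Quot$ is representable so that ``tangent space'' makes sense via Proposition~\ref{r:deftangent}.
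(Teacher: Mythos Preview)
The paper does not actually prove Theorem~\ref{r:thmquottangent}; it is stated as a recalled classical fact about the Quot scheme, and the only content the paper adds is the explicit formula~\eqref{r:eqtK} for $\tK$ in terms of $\varphi$, which is then used in the proof of Proposition~\ref{r:thmbilintangent}. So there is nothing to compare your argument against on the paper's side.

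That said, your proof is the standard one and is essentially correct. The flatness input you isolate is exactly right: $\Tor_1^{\keps}(\tM,\kk)=0$ forces the map $\tK/\varepsilon\tK\to\tF/\varepsilon\tF=F$ to be injective with image $K$, and equivalently gives $\tK\cap\varepsilon\tF=\varepsilon\tK$. One small wobble: when you discuss the ambiguity in the lift $\widetilde\kappa$, you write ``up to $\varepsilon\tK\cap\varepsilon\tF$,'' but the correct statement is that two lifts of $\kappa$ differ by an element of $\tK\cap\varepsilon\tF$, which by the flatness identity equals $\varepsilon\tK$, and one then checks $\varepsilon\tK=\varepsilon K$ as a subset of $\varepsilon F$ (since the reduction $\tK\to K$ is surjective). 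This is what makes $g(\kappa)\bmod K$ well defined. Your inverse construction recovers precisely the paper's formula~\eqref{r:eqtK}, so your argument dovetails with how the result is used downstream.
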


\noindent
For a $\kk$-point $[p \colon F \rightarrow M = \sfrac{F}{K}]$ and a map $\phi \colon K \rightarrow M$, 
the corresponding tangent vector is defined as
\begin{equation*}
    \big[ \widetilde p \colon \widetilde F \rightarrow \widetilde M = \sfrac{\widetilde F}{\widetilde K} \big],
\end{equation*}
where 
\begin{equation}\label{r:eqtK}
    \tK = \{ k+ \varepsilon g \ \vert \ k \in K, \ g \in F \text{ such that } g+K = \varphi(k) \}.
\end{equation}

We can generalize the smoothable component of the Hilbert scheme 
to the setting of the Quot scheme. 
For an ideal $I \lhd S$, consider surjections 
$S^{\oplus r} \twoheadrightarrow M$ such that 
$M$ comes from an algebra $\sfrac{S}{I}$ corresponding to a tuple of points,
which means that there is an isomorphism $M \simeq \sfrac{S^{\oplus 1}}{I}$. 
This gives rise to an open subscheme $U$ of $Quot$ 
such that the dimension of $U$ is $nd + (r-1)d$. 
The closure of this subscheme is called the \emph{principal component}. 
Consider a subset 
\begin{align*}
    U_{\kk} & = \{ S^{\oplus r} \twoheadrightarrow M \mid M \text{ corresponds to a tuple of $d$ points} \} _{/ \sim} \\
    & = \{ S^{\oplus r} \twoheadrightarrow M \mid M \simeq \sfrac{S^{\oplus 1}}{I} \text{ for an ideal } I \lhd S \} _{/ \sim} \subseteq \Quot_d ^r(\A^n)(\kk).
\end{align*}
We have the universal element of $\Quot_d ^r(\A^n)$ that we denote 
$[\xi] = [ \Oo_{Quot}^{\oplus r} \otimes S \xrightarrowdbl{\xi} \M ]$ as in (\ref{r:equnivquot}). 
We can view the set $U_{\kk}$ as consisting of $\Bbbk$-points 
$p \colon \Spec \Bbbk \rightarrow Quot$ such that 
\begin{equation*}
    p^{*} [\xi] = \big[ (\Oo_{Quot}^{\oplus r} \otimes S)_{\vert p} \simeq S^{\oplus r} \ \xrightarrowdbl{\bar{\xi}} \ \M_{\vert p} \simeq \sfrac{S}{I} \big]
\end{equation*}
when we take the pullback. 
The set $U_{\kk}$ is open in $\Quot_d ^r(\A^n)(\kk)$, so it 
gives rise to an open subfunctor $\U$ of $\Quot_d ^r(\A^n)$ 
such that $\U(\kk) = U_{\kk} = U(\kk)$, 
where $U$ is an open subscheme of $Quot$ corresponding to $U_{\kk}$. 
We sketch the construction of $\U$ and compute its dimension. 

We start by covering the set $U_{\kk}$, $v \in \Bbbk^{\oplus r}$, by 
open sets $\U_v(\Bbbk)$ defined in the following way. 
The vector $v \in \Bbbk^{\oplus r}$ comes from a global section 
$s_v \colon \Oo_{Quot} \otimes S \rightarrow \Oo_{Quot}^{\oplus r} \otimes S$ 
such that the pullback along a $\Bbbk$-point 
$p \colon \Spec \Bbbk \rightarrow Quot$ gives a map
\begin{equation*}
    (\Oo_{Quot} \otimes S)_{\vert p} \simeq S \rightarrow  (\Oo_{Quot}^{\oplus r} \otimes S)_{\vert p}  \simeq S^{\oplus r}, \quad 1 \mapsto v \in \Bbbk ^{\oplus r} \subseteq S^{\oplus r}. 
\end{equation*}
For $v \in \Bbbk^{\oplus r}$ of this form we define a subset $\U_v(\Bbbk) \subseteq U_{\kk}$ as 
\begin{equation*}
    \U_v(\Bbbk) = \{ S^{\oplus r} \xrightarrowdbl[]{\pi} M \mid \pi(Sv) = M \text{ and $M$ corresponds to a tuple of $d$ points} \}_{/ \sim} \subseteq U_{\kk}.
\end{equation*}
\begin{proposition}
    $U_{\kk}$ is covered by $\{ \U_v(\Bbbk) \}_{v \in \Bbbk^{\oplus r}}$.
\end{proposition}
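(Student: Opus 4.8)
The plan is to exploit the explicit shape of $M$ whenever $[\pi\colon S^{\oplus r}\onto M]$ lies in $U_\kk$: by definition $M\simeq S^{\oplus 1}/I$ with $\Spec(S/I)$ a tuple of $d$ distinct $\kk$-points, so I would begin by fixing an isomorphism $M\simeq\kk^d$ of $S$-modules under which $S$ acts through the evaluation homomorphism $S\to\kk^d$. This homomorphism is surjective, since the maximal ideals of the $d$ distinct points are pairwise comaximal (Chinese Remainder Theorem). Consequently, for a tuple $a=(a_1,\dots,a_d)$ one computes $S\cdot a=\{(\mu_1a_1,\dots,\mu_da_d)\mid\mu\in\kk^d\}$, so that $S\cdot a=M$ exactly when every $a_j\neq 0$, while $S\cdot a$ is contained in the coordinate hyperplane $H_j=\{x_j=0\}$ as soon as $a_j=0$. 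In other words, the $S$-module generators of $M$ are precisely the tuples with no vanishing coordinate, and they form an open dense subset of $M$.

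Next I would introduce the $\kk$-linear subspace $V=\pi(\kk^{\oplus r})\subseteq M$ and argue that it meets this set of generators. Since $S^{\oplus r}$ is generated as an $S$-module by the standard basis $e_1,\dots,e_r\in\kk^{\oplus r}$ and $\pi$ is surjective, the images $\pi(e_1),\dots,\pi(e_r)\in V$ already generate $M$, hence $S\cdot V=M$. From the first paragraph it follows that $V$ cannot be contained in any $H_j$: otherwise every $v\in V$ satisfies $S\cdot v\subseteq H_j$, whence $S\cdot V\subseteq H_j\subsetneq M$, contradicting $S\cdot V=M$.

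To finish, I would use that $\kk$ is infinite, so the vector space $V$ is not the union of the finitely many proper subspaces $V\cap H_1,\dots,V\cap H_d$; choosing $v_0\in V\setminus\bigcup_{j=1}^d H_j$ and writing $v_0=\pi(v)$ with $v\in\kk^{\oplus r}$, the tuple $v_0$ has all coordinates nonzero, hence generates $M$, i.e.\ $\pi(Sv)=S\cdot\pi(v)=M$. This places $[\pi]$ in $\U_v(\kk)$, and since $\U_v(\kk)\subseteq U_\kk$ holds by construction, it yields $U_\kk=\bigcup_{v\in\kk^{\oplus r}}\U_v(\kk)$. The step I expect to be the real content is the passage from ``$V$ generates $M$ as an $S$-module'' to ``$V$ contains a single element generating $M$'': this is false for an arbitrary finite $S$-module, and the argument genuinely uses both the semisimple product structure $M\simeq\kk^d$ provided by the tuple-of-points hypothesis and the infiniteness of $\kk$ — exactly the standing assumptions on the base field. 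Everything else is routine bookkeeping with the $S$-action.
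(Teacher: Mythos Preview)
Your proof is correct. The paper, in fact, states this proposition without proof: the surrounding text uses the covering to deduce that $U_\kk$ is open, and the two-step outline that follows concerns the \emph{openness} of $\U_v(\kk)$, not the covering itself. So you have supplied a complete argument where the paper leaves a gap.

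Your approach is the natural one: identify $M\simeq\kk^d$ via the Chinese Remainder Theorem, observe that cyclic generators are exactly the tuples with no zero coordinate, and then show $V=\pi(\kk^{\oplus r})$ meets this set by the avoidance argument for a vector space over an infinite field versus finitely many proper subspaces. Your remark at the end is on point: the passage from ``$V$ generates $M$ over $S$'' to ``$V$ contains a single generator'' genuinely requires both the semisimple structure of $M$ and $|\kk|=\infty$, and would fail, e.g., for $M=S/\mm^2$ or over a finite field. Nothing to correct.
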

\noindent
The sets of the form $\U_v(\Bbbk)$ are open in $\Quot_d ^r(\A^n)(\Bbbk)$ 
and since $U_{\kk} = \bigcup_v \U_v(\Bbbk)$, 
it follows that $U_{\kk}$ is open in $\Quot_d ^r(\A^n)(\Bbbk)$. 
The proof can be broken down into two steps:
\begin{enumerate}
    \item Fix $v \in \Bbbk^{\oplus r}$ and define 
    \begin{equation}\label{r:equprime}
        \U_v'(\Bbbk) = \{ S^{\oplus r} \xrightarrowdbl[]{\pi} M \ \vert \ \pi(Sv) = M \}_{/ \sim} \ \subseteq \ \Quot_d ^r(\A^n)(\Bbbk).
    \end{equation}
    \noindent
    Then $\U_v(\Bbbk) = \U_v'(\Bbbk) \cap U_{\kk}$. 
    Prove that $\U_v'(\Bbbk)$ is open in $\Quot_d ^r(\A^n)(\Bbbk)$.
    \item Prove that $\U_v(\Bbbk)$ is open in $\U_v'(\Bbbk)$.
\end{enumerate}
\noindent
Proposition \ref{r:prpkpointsext} gives an open subfunctor $\U \subseteq \Quot_d ^r(\A^n)$ 
represented by the open subscheme $U \subseteq Quot$ such that $U(\kk)=U_{\kk}$. 
We will now sketch the proof that $U$ is of dimension $nd + (r-1)d$. 
Since the scheme $U$ is covered by open subschemes $U_v$, 
we have 
\begin{equation*}
    \dim U = \sup_v \, \{ \dim U_v \} = \dim U_{e_1} \quad \text{for} \ e_1 = (1, 0, \dots, 0).
\end{equation*}
We calculate the dimension of $U_{e_1}$ by showing that it is isomorphic 
to a vector bundle over the subscheme $T \subseteq Hilb$ that parametrizes tuples of points. 
Let $\M = \beta_{*} \Oo_{\mathcal{Z_T}}$ be the locally free $\Oo_T$-module 
associated to the universal element of $T$, as in (\ref{r:equnivt}). 
We define the \emph{vector bundle} associated to $\M$ as
\begin{equation*}
    \mathbb{V}(\M) = \Sspec(\Sym (\M^{\vee})).
\end{equation*}
By \cite[Proposition 11.3]{gw}, for every 
$p \colon Y \rightarrow T$ there is a bijection  
\begin{equation}\label{r:eqspeciso}
    \Hom_{\Sch_{/ T}}(Y, \, \mathbb{V}(\M)) \xrightarrow{\simeq}  \Gamma(Y, \, p^{*} \M) 
\end{equation}
natural in $Y$. 
The module $\M$ is locally free. 
Let $W$ be an open subscheme such that we have the trivialization 
$\M_{\vert _W} \simeq \Oo_W ^{\oplus d}$. 
Then for an open affine $\Spec A \rightarrow W$ the isomorphism (\ref{r:eqspeciso}) gives 
\begin{align*}
    \begin{split}
        \Hom_{\Sch_{/ T}}(\Spec A, \, \mathbb{V}(\M)) & \simeq \Gamma(\Spec A, \, \Oo_{\Spec A})^d \\
        & \simeq \A_T^d(A) \simeq \T(A) \times_{\Bbbk} \A_{\Bbbk}^d (A).
    \end{split}
\end{align*}

\begin{proposition}\label{r:prpue1iso}
    There is a natural isomorphism 
    \begin{equation*}
        \phi \colon \U_{e_1} \rightarrow \Hom_{\Sch_{/T}}(-, \, \mathbb{V}(\M)^{\times r-1}).
    \end{equation*}
\end{proposition}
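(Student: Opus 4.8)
The plan is to produce $\phi$ together with an explicit inverse, both assembled entirely out of pullback operations, so that naturality in the test scheme is automatic and it suffices to describe, for each $\Bbbk$-scheme $X$, a bijection $\phi_X$ between $\U_{e_1}(X)$ and the set of morphisms $X \to \mathbb{V}(\M)^{\times r-1}$. (Here $\Hom_{\Sch_{/T}}(-,\,\mathbb{V}(\M)^{\times r-1})$ is the functor on $\Sch_{/\Bbbk}$ sending $X$ to those morphisms of $\Bbbk$-schemes $X \to \mathbb{V}(\M)^{\times r-1}$, the underlying $g \colon X \to T$ being recovered by composing with the structure map; since $\mathbb{V}(\M)^{\times r-1}$ is a scheme, this functor is just its functor of points, so the proposition also records that $\U_{e_1}$ is representable by it.)

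For the forward map I would unpack a point $[\pi] \in \U_{e_1}(X)$: it is the $\ker$-equivalence class of a surjection $\pi \colon \Oo_X^{\oplus r} \otimes_\Bbbk S \onto \M_X$ of $(\Oo_X \otimes_\Bbbk S)$-modules with $\M_X$ locally free of rank $d$ over $\Oo_X$ and fibrewise a tuple of $d$ points, for which the restriction $\pi_1$ to the first summand is fibrewise surjective; by Nakayama this forces $\pi_1 \colon \Oo_X \otimes_\Bbbk S \onto \M_X$ to be an honest surjection. Decomposing $\pi = (\pi_1,\dots,\pi_r)$ by summands, the class $[\pi_1]$ is a point of $\Hilb_d(\A^n)(X)$, and since it is fibrewise a tuple of points it lies in the open subfunctor $\T$, hence corresponds to a morphism $g_\pi \colon X \to T$. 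The universal property of $T$ (inherited from $Hilb$) identifies the pullback along $g_\pi$ of the universal quotient $\Oo_T \otimes_\Bbbk S \onto \M$ with $[\pi_1]$, and thus yields a canonical isomorphism $g_\pi^*\M \simeq \M_X$ intertwining the two surjections. Each remaining $\pi_j$ ($j \geq 2$) is a map out of the free rank-one module, so it is recorded by the section $\pi_j(1) \in \Gamma(X,\M_X) \simeq \Gamma(X, g_\pi^*\M)$; the tuple $(\pi_2(1),\dots,\pi_r(1))$ is then a global section of $g_\pi^*(\M^{\oplus r-1})$, and since $\mathbb{V}(\M)^{\times r-1} = \mathbb{V}(\M^{\oplus r-1})$, the bijection (\ref{r:eqspeciso}) converts it into a morphism $h_\pi \colon X \to \mathbb{V}(\M)^{\times r-1}$ over $g_\pi$. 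I would set $\phi_X([\pi]) = h_\pi$, having first checked that this is independent of the chosen representative (both $\ker\pi_1$ and the induced data on $\M_X$ depend only on $\ker\pi$).

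The inverse runs the same steps backwards. Given $h \colon X \to \mathbb{V}(\M)^{\times r-1}$ with underlying $g \colon X \to T$, pulling back the universal quotient of $T$ along $g$ gives $\pi_1 \colon \Oo_X \otimes_\Bbbk S \onto g^*\M =: \M_X$, still fibrewise a tuple of points; (\ref{r:eqspeciso}) turns $h$ into a tuple $(s_2,\dots,s_r) \in \Gamma(X,\M_X)^{\,r-1}$, which I read as maps $\pi_j \colon \Oo_X \otimes_\Bbbk S \to \M_X$, $1 \mapsto s_j$; and $\pi = (\pi_1,\dots,\pi_r)$ is surjective (already $\pi_1$ is), fibrewise a tuple of points, and surjective on the first summand, so $[\pi] \in \U_{e_1}(X)$. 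That these two assignments are mutually inverse is then formal, using the uniqueness clause in the universal property of $T$ and the compatibility of the canonical isomorphism $g^*\M \simeq \M_X$ with (\ref{r:eqspeciso}). Naturality of $\phi = (\phi_X)_X$ follows because each operation used --- restricting a quotient to a summand, taking the classifying morphism of a flat family, pulling back the universal quotient, and the isomorphism (\ref{r:eqspeciso}) --- commutes with base change.

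I expect the real work to lie in the bookkeeping around these compatibilities rather than in any single hard computation. The points that need care are: that $\pi_1$ surjective with a fibrewise-reduced target forces the classifying morphism into the \emph{open} subscheme $T \subseteq Hilb$ and not merely into $Hilb$ (for which it is convenient to argue over the finite-type scheme $U_{e_1}$ itself); that the canonical identification $g_\pi^*\M \simeq \M_X$ furnished by Hilbert's universal property is exactly the one under which (\ref{r:eqspeciso}) translates sections of $\M_X$ into $T$-morphisms to $\mathbb{V}(\M)$; and well-definedness on the $\ker$-equivalence classes defining the points of the Quot functor. The only input that is not formal, the $\Sym$--$\mathbb{V}$ adjunction in the form of the natural bijection (\ref{r:eqspeciso}), is already available.
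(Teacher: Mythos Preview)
The paper does not actually supply a proof of this proposition: it is stated without argument, as part of the sketch of the dimension computation for $U$. Your proposal fills in precisely the argument the paper's setup is pointing toward---extract $\pi_1$ to get the classifying map $g_\pi\colon X\to T$, then record $\pi_2,\dots,\pi_r$ as sections of $g_\pi^*\M$ via the bijection~(\ref{r:eqspeciso})---and it is correct. The care you flag at the end (that the moduli description of $\U_{e_1}(X)$ for general $X$ must be reconciled with its definition as the functor of points of an open subscheme of $Quot$, and that the classifying map really lands in the open $T\subseteq Hilb$) is exactly the bookkeeping the paper leaves implicit; your suggestion to reduce to the universal case over the finite-type scheme $U_{e_1}$ itself is the clean way to handle it.
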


\begin{proposition}\label{r:prpue1dim}
    The dimension of $U_{e_1} \subseteq \Quot_d ^r(\A^n)$ is $nd+(r-1)d$.
\end{proposition}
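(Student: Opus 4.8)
The plan is to read the dimension off from Proposition~\ref{r:prpue1iso}, which identifies $U_{e_1}$ with an iterated vector bundle over the scheme $T \subseteq Hilb$ parametrizing tuples of $d$ points. First I would note that the natural isomorphism $\phi \colon \U_{e_1} \xrightarrow{\simeq} \Hom_{\Sch_{/T}}(-,\,\mathbb{V}(\M)^{\times r-1})$ exhibits $\U_{e_1}$ as the functor of points of the $\kk$-scheme $\mathbb{V}(\M)^{\times r-1}$, the $(r-1)$-fold fibre product over $T$ of the vector bundle $\mathbb{V}(\M) = \Sspec(\Sym(\M^{\vee}))$; by Yoneda (Corollary~\ref{r:thmyonedaemb}) this forces $U_{e_1} \simeq \mathbb{V}(\M)^{\times r-1}$ as $\kk$-schemes, so it suffices to compute $\dim \mathbb{V}(\M)^{\times r-1}$.

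Next I would use that $\M = \beta_{*}\Oo_{\Zz_T}$ is locally free of rank $d$ over $\Oo_T$ to show that the structure morphism $\mathbb{V}(\M)^{\times r-1} \to T$ is Zariski-locally trivial with fibre $\A^{(r-1)d}$. Indeed, over an open $W \subseteq T$ with $\M_{\vert W} \simeq \Oo_W^{\oplus d}$, the bijection (\ref{r:eqspeciso}) — as worked out in the computation preceding Proposition~\ref{r:prpue1iso} — gives $\mathbb{V}(\M)_{\vert W} \simeq \A^d_W$, and forming the $(r-1)$-fold fibre product over $W$ yields $(\mathbb{V}(\M)^{\times r-1})_{\vert W} \simeq \A^{(r-1)d}_W$. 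Such opens $W$ cover $T$, hence their preimages cover $\mathbb{V}(\M)^{\times r-1}$.

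Finally I would conclude by a dimension count: for a scheme $E$ admitting a morphism to $T$ that is Zariski-locally of the form $W \times \A^{N}$, one has $\dim E = \sup_W (\dim W + N) = \dim T + N$, since these charts cover $E$ and $\dim T = \sup_W \dim W$ taken over any such cover. Applying this with $N = (r-1)d$, and recalling from Section~\ref{r:sechilb} that $\dim T = nd$ (and that $T \neq \varnothing$, so the count is not vacuous), we obtain $\dim U_{e_1} = \dim \mathbb{V}(\M)^{\times r-1} = nd + (r-1)d$. I do not expect a serious obstacle here: the substance of the statement lies entirely in Proposition~\ref{r:prpue1iso}, and the only point needing a little care is this last one, namely that the total space of a Zariski-locally trivial $\A^N$-bundle over $T$ has dimension exactly $\dim T + N$.
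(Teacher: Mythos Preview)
Your proposal is correct and follows exactly the same approach as the paper: both deduce the dimension from Proposition~\ref{r:prpue1iso} by identifying $U_{e_1}$ with the total space of a rank $(r-1)d$ vector bundle over $T$ and then adding $\dim T = nd$ to the fibre dimension. The paper's proof is a one-liner writing $\dim U_{e_1} = \dim(T \times \A^{d(r-1)})$, whereas you spell out the Zariski-local triviality and the resulting dimension count more carefully; this is a virtue, not a deviation.
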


\begin{proof}
    By Proposition \ref{r:prpue1iso}, 
    \begin{equation*}
    \dim U_{e_1} = \dim(T \times \A^{d(r-1)}) = \dim T + \dim \A^{d(r-1)} = nd+(r-1)d.
    \end{equation*}
\end{proof}

Now we define the \emph{totally degenerate locus} as corresponding to 
surjections of $S$-modules of the form $(\sfrac{S}{\mm})^{\oplus d}$ for 
a fixed maximal ideal 
\begin{equation*}
    \mm = (x_1, \dots, x_n) \, \lhd \, S = \kk [ x_1, \dots, x_n ]. 
\end{equation*}
An $S$-module $M \simeq (\sfrac{S}{\mm})^{\oplus d}$ can be equivalently characterized 
by 
\begin{equation*}
    \dim_{\kk} M = d \quad \text{and} \quad \mm M = 0. 
\end{equation*}
Let $\Zz_{\mm}(\kk) \subseteq \Quot_d^{r}(\A^n)(\kk)$ denote the subset 
corresponding to the modules of this form. 
We will extend this to a closed subfunctor $\Zz_{\mm}$ of $\Quot_d^r(\A^n)$. 
For every $a \in \mm$ consider the multiplication map $\mu_{a} \colon M \rightarrow M$. 
The condition $\mm M = 0$ is equivalent to 
the vanishing of multiplication maps $\mu_a$ for all $a \in \mm$. 
Consider a more general case. Take a subset $I \subseteq S$ and define 
\begin{align*}
    \Zz_I(\kk)= \{ S^{\oplus r} \twoheadrightarrow M \mid I M = 0 \}_{/ \sim} \subseteq \Quot_d^r(\A^n)(\kk).
\end{align*}
For each $a \in I$ define 
\begin{align*}
    \Zz_a(\kk)= \{ S^{\oplus r} \twoheadrightarrow M \mid a M = 0 \}_{/ \sim} \subseteq \Quot_d^r(\A^n)(\kk).
\end{align*}
then $\Zz_I(\kk) = \bigcap _{a \in I} \Zz_a(\kk)$. 
If the set $\Zz_a(\kk)$ is closed, then so is $\Zz_I(\kk)$. 
\begin{proposition}\label{r:prpzaquot}
    The set $\Zz_a(\kk)$ defines a closed subfunctor $\Zz_a$ of $\Quot_d^r(\A^n)$.
\end{proposition}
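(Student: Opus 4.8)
The plan is to exhibit $\Zz_a$ as the functor of zeros of a morphism of locally free sheaves on $Quot$ and then invoke Proposition~\ref{r:prpfunctofzeros}. Recall the universal element $[\xi] = [\Oo_{Quot}^{\oplus r}\otimes S \xrightarrowdbl{\xi} \M]$ from (\ref{r:equnivquot}), so that $\M$ is locally free of rank $d$ over $\Oo_{Quot}$ and carries an $S$-module structure, i.e.\ for the fixed element $a\in S$ there is an $\Oo_{Quot}$-linear multiplication map $\mu_a\colon \M\rightarrow\M$. Since $\M$ is locally free of rank $d$, this $\mu_a$ is a morphism of locally free $\Oo_{Quot}$-modules, and Proposition~\ref{r:prpfunctofzeros} produces a closed subscheme $Z(\mu_a)\subseteq Quot$ representing the functor $\Zz(\mu_a)$. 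I will define $\Zz_a$ to be the subfunctor represented by $Z(\mu_a)$, equivalently $\Zz_a = \underline{Z(\mu_a)}$, and check that on $\kk$-points it recovers the prescribed set.

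The verification has three steps, carried out in this order. First, I check that $\Zz_a$ is a subfunctor of $\Quot_d^r(\A^n)$: the inclusion $Z(\mu_a)\hookrightarrow Quot$ is a closed immersion of schemes, hence $\underline{Z(\mu_a)}\hookrightarrow\underline{Quot}\simeq\Quot_d^r(\A^n)$ is a natural transformation that is injective on each $T$, and naturality of restriction maps is automatic. Second, I identify $\Zz_a(\kk)$ with the claimed set: a $\kk$-point $p\colon\Spec\kk\rightarrow Quot$ lies in $Z(\mu_a)$ iff $p^*\mu_a\colon \M_{\vert p}\rightarrow\M_{\vert p}$ is the zero map; unwinding $p^*[\xi] = [S^{\oplus r}\xrightarrowdbl{\bar\xi} M]$ with $M = \M_{\vert p}$, the pullback of $\mu_a$ is exactly multiplication by $a$ on $M$, so $p\in Z(\mu_a)(\kk)$ iff $aM = 0$, which is the defining condition of $\Zz_a(\kk)$. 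Third — and this is the only step requiring genuine care — I confirm that $\Zz_a$ as a closed subfunctor agrees with the ``scheme-theoretic'' description for all test schemes $T$, not just $\kk$-points: for $f\colon T\rightarrow Quot$ corresponding to a quotient $\Oo_T^{\oplus r}\otimes S\xrightarrowdbl{\pi}\M_T$, the element $f^*[\xi]$ has $a\M_T = 0$ iff $f$ factors through $Z(\mu_a)$, which is precisely the content of Proposition~\ref{r:prpfunctofzeros} applied to $\mu_a$ together with the compatibility $f^*(\mu_a^{\xi}) = \mu_a^{f^*\xi}$ (the $S$-module structure on $\M$ is functorial under pullback).

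The main obstacle is the third step, namely making sure the $S$-multiplication maps behave well under pullback and that no flatness or rank subtlety is lost: one must know that for any $f\colon T\rightarrow Quot$ the sheaf $f^*\M$ is still locally free of rank $d$ (true, since pullback of locally free is locally free) and that the multiplication-by-$a$ endomorphism of $f^*\M$ coincides with $f^*\mu_a$. This is a compatibility of the universal property of $Quot$ with the $S$-action, which is straightforward but must be stated to legitimately invoke the functor-of-zeros machinery; once it is in place, Proposition~\ref{r:prpfunctofzeros} does the rest and the closedness of $\Zz_a$ follows. Finally I remark, as the excerpt anticipates, that $\Zz_I = \bigcap_{a\in I}\Zz_a$ is then a closed subfunctor as an intersection of closed subfunctors, and in particular $\Zz_{\mm}$ is closed, completing the generalization of the totally degenerate locus to the Quot scheme.
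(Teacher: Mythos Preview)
Your proposal is correct and follows essentially the same approach as the paper: both realize $\Zz_a$ as the functor of zeros $\Zz(\mu_a)$ of the multiplication-by-$a$ endomorphism of the universal quotient sheaf $\M$ on $Quot$, and then invoke Proposition~\ref{r:prpfunctofzeros}. If anything, your third step spells out more carefully the compatibility $f^*\mu_a = \mu_a^{f^*\xi}$ that the paper leaves implicit when it says the map $\mu$ ``restricts to the multiplication map $\mu_{a'}$'' on each affine.
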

\noindent
In order to define the subfunctor $\Zz_a \subseteq \Quot_d^{r}(\A^n)$ 
we use the construction of the functor of zeros from Proposition \ref{r:prpfunctofzeros}. 
We describe the extension of each subset $\Zz_a(\kk)$ to a functor 
of the form $\Zz(\mu)$ for a map $\mu$ associated to $\mu_a$. 
\begin{proof}[Proof of Proposition \ref{r:prpzaquot}]
    Let $[\xi] = [\Oo_{Quot}^{\oplus r} \otimes_{\Bbbk} S \xrightarrowdbl[]{\xi} \M]$ 
    denote the universal element of $\Quot_d^r(\A^n)$, as defined in (\ref{r:equnivquot}). 
    Let the surjection $S^{\oplus r} \twoheadrightarrow M$ correspond to 
    a $\kk$-point $p \colon \Spec \kk \rightarrow Quot$. 
    The multiplication $\mu_a \colon M \rightarrow M$ by $a \in S$ 
    comes from a map $\mu \colon \M \rightarrow \M$ such that 
    \begin{align*}
        \mu _{\vert p} = \mu_a \colon \M_{\vert p} = M  \rightarrow \M_{\vert p} = M, 
    \end{align*}
    and the element $a$ comes from a section $\widetilde{a}$ of $\Oo_{Quot} \otimes S$. 
    On each affine scheme $\Spec B$, the map $\mu \colon \M \rightarrow \M$ 
    restricts to the multiplication map $\mu_{a'} \colon M_B \rightarrow M_B$, 
    where $M_B = \M _{\vert_{\Spec B}}$ is a locally free $S_B$-module and 
    the element $a' \in S_B$ is the restriction of the section $\widetilde{a}$ to $\Spec B$. 
    Then we have 
    \begin{align*}
        \begin{split}
            \Zz(\mu)(B) & \ = \ \{ f \colon \Spec B \rightarrow Quot \mid f^* \mu \colon f^* \M \rightarrow f^* \M \text{ is the zero map} \} \\
            & \ = \ \{ f \colon \Spec B \rightarrow Quot \mid a' M_B = 0 \}.
        \end{split}
    \end{align*}
    \noindent
    This extends the set $\Zz_{a}(\kk)$ to a representable functor. 
    Hence $\Zz_a$ arises as the functor of zeros $\Zz(\mu)$, 
    and so it forms a closed subfunctor of $\Quot_d^r(\A^n)$ by Proposition \ref{r:prpfunctofzeros}. 
\end{proof}
\noindent
It follows that for an arbitrary subset $I \subseteq S$ we have $\Zz_I = \bigcap_{a \in I} \Zz_a$ 
and in particular for $I = \mm$ this yields the desired representable functor $\Zz_{\mm}$. 
Now we calculate the dimension of $\Zz_{\mm} \subseteq \Quot_d^r(\A^n)$. 
We define the Grassmannian functor 
$\Gr(d,r) \colon \Sch_{\kk}^{op} \rightarrow \Set$ on $\kk$-points as 
\begin{equation*}
    \Gr(d,r)(\kk) = \left\{ \text{quotients of $\kk$-vector spaces } \kk^{\oplus r} \twoheadrightarrow V \ \vert \ \dim_{\kk} V =d \right\}_{/ \sim}.
\end{equation*}
For an affine scheme $\Spec A$ we have 
\begin{equation*}
    \Gr(d,r)(A) = \left\{ \text{quotients of locally free $A$-modules } A^{\oplus r} \twoheadrightarrow V \ \vert \ \text{$V$ is of rank $d$} \right\}_{/ \sim}.
\end{equation*}
The Grassmannian functor $\Gr(d,r)$ is 
representable by a $\kk$-variety of dimension $(r-d)d$. 
The maximal ideal $\mm \lhd S$ corresponds to a $\kk$-point of $\A^n$, which gives the following isomorphism: 
\begin{equation*}
    \alpha_{\mm} \colon \kk \xhookrightarrow{\iota_{\mm}} S \xrightarrowdbl{\pi_{\mm}} \sfrac{S}{\mm}.
\end{equation*}
We will use this isomorphism to relate $\Gr(d,r)$ and $\Zz_{\mm}$. 
\begin{proposition}\label{r:prpgrzmiso}
    There is a natural isomorphism $\phi \colon \Gr(d,r) \rightarrow \Zz_{\mm}$.
\end{proposition}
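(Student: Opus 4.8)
The plan is to build the natural transformation $\phi \colon \Gr(d,r) \to \Zz_{\mm}$ pointwise on each affine scheme $\Spec A$, using the fixed isomorphism $\alpha_{\mm} \colon \kk \xrightarrow{\simeq} \sfrac{S}{\mm}$ to pass between quotients of the free $A$-module $A^{\oplus r}$ and quotients of the free $S_A$-module $S_A^{\oplus r}$ killed by $\mm$. First I would base-change $\alpha_{\mm}$ to $A$, obtaining $\alpha_{\mm} \otimes \id_A \colon A \xrightarrow{\simeq} (\sfrac{S}{\mm}) \otimes_{\kk} A = \sfrac{S_A}{\mm S_A}$; this is still an isomorphism since $\alpha_{\mm}$ is an isomorphism of $\kk$-modules. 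Given a class of a quotient $[A^{\oplus r} \twoheadrightarrow V] \in \Gr(d,r)(A)$ with $V$ locally free of rank $d$, I compose with the structure map $S_A^{\oplus r} \twoheadrightarrow A^{\oplus r} \twoheadrightarrow V$ (the first arrow induced by $S_A = S \otimes_{\kk} A \twoheadrightarrow (\sfrac{S}{\mm}) \otimes_\kk A \simeq A$, i.e.\ evaluation at the point $\mm$), which exhibits $V$ as an $S_A$-module on which $\mm$ acts as zero. Since $V$ is locally free of rank $d$ over $A$ and $\mm V = 0$, this defines an element of $\Zz_{\mm}(A) \subseteq \Quot_d^r(\A^n)(A)$, and I set $\phi_A([A^{\oplus r}\twoheadrightarrow V]) = [S_A^{\oplus r} \twoheadrightarrow V]$. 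Conversely, given $[S_A^{\oplus r}\twoheadrightarrow M] \in \Zz_{\mm}(A)$, the condition $\mm M = 0$ means the $S_A$-action factors through $\sfrac{S_A}{\mm S_A} \simeq A$, so $M$ is canonically an $A$-module, locally free of rank $d$, and the surjection factors as $S_A^{\oplus r} \twoheadrightarrow A^{\oplus r} \twoheadrightarrow M$; this gives the inverse $\psi_A$.

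Next I would check that $\phi_A$ and $\psi_A$ are well-defined on equivalence classes (they depend only on the kernel, since the construction never uses more than the kernel of the surjection) and that they are mutually inverse, which is essentially the statement that $\mm M = 0$ if and only if the $S_A$-module structure on $M$ is pulled back from an $A$-module structure along $S_A \twoheadrightarrow A$ — this is immediate from the universal property of the quotient ring. Then I would verify naturality in $A$: for a ring map $A \to B$, base change commutes with all the operations used (tensoring up the surjection, the identification $\sfrac{S_B}{\mm S_B} \simeq B$ obtained from $\sfrac{S_A}{\mm S_A}\simeq A$ by $\otimes_A B$, and the formation of the quotient module), so the square relating $\phi_A$ and $\phi_B$ commutes. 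By \cite[Proposition VI-2]{eh} it suffices to work with affine schemes, so these pointwise isomorphisms assemble into a natural isomorphism of functors $\Sch_{/\kk}^{op} \to \Set$.

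The only genuinely delicate point is the interaction between local freeness and the module-structure change: I must confirm that an $S_A$-module $M$ with $\mm M = 0$ which is locally free of rank $d$ over $A$ is the \emph{same data} as a locally free rank-$d$ $A$-module together with a choice of surjection from $A^{\oplus r}$, with no hidden $S_A$-linear constraints beyond $A$-linearity — and here the key is that $S_A \twoheadrightarrow \sfrac{S_A}{\mm S_A} \simeq A$ is a surjection of rings, so $S_A$-linear maps between modules killed by $\mm$ are exactly $A$-linear maps, making the forgetful functor from ($\mm$-torsion $S_A$-modules) to ($A$-modules) an equivalence. Once this is in place everything else is bookkeeping. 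As a sanity check the dimension matches: $\Gr(d,r)$ is representable by a variety of dimension $(r-d)d$, which is exactly the first summand predicted for the totally degenerate locus dimension $(r_1-d)d + (r_2-d)d + (d^2-d)d$ of the Bilinear scheme in the factor corresponding to one of the $p_i$.
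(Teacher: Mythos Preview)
Your proposal is correct and follows essentially the same approach as the paper: both use the ring surjection $S_A \twoheadrightarrow \sfrac{S_A}{\mm S_A} \simeq A$ (coming from $\alpha_{\mm}$) to identify $S_A$-module quotients killed by $\mm$ with $A$-module quotients, and construct mutually inverse maps by composing with or factoring through this surjection. The paper only spells out the $\kk$-point case and asserts that $\Spec A$ is similar, whereas you work directly over a general affine base and explicitly check well-definedness, naturality, and the equivalence between $\mm$-torsion $S_A$-modules and $A$-modules---so your version is in fact more complete.
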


\begin{proof}
    We describe the desired map on $\kk$-points, the proof for $\Spec A$ is similar. 
    Take a $\kk$-point 
    \begin{equation*}
        [S^{\oplus r} \xrightarrowdbl[]{\pi} M \simeq (\sfrac{S}{\mm})^{\oplus d}] \in \Zz_{\mm}(\kk).
    \end{equation*}
    Since $\mm M = 0$, the map $\pi$ factors through the inclusion $\iota_{\mm}$. 
    In other words, the map $\pi$ is given by a matrix $(a_{ij})_{i,j} \in \mathbb{M}_{d \times r}(\kk)$, where 
    \begin{equation*}
        \kk^{\oplus r} = \oplus_{i} \, \kk e_i \xhookrightarrow{\iota_{\mm}} S^{\oplus r} = \oplus_{i} \, S e_i \xrightarrowdbl[]{\pi} M \xrightarrow{\simeq} (\sfrac{S}{\mm})^{\oplus d} \xrightarrow{\alpha_{\mm}^{-1}} \kk^{\oplus d} = \oplus_{j} \, \kk e_j
    \end{equation*}
    sends $e_i \mapsto a_{1i}e_1 + \ldots + a_{di}e_d$.  
    This defines a $\kk$-linear surjection $\kk^{\oplus r} \twoheadrightarrow \kk^{\oplus d}$ 
    uniquely up to isomorphism, which gives a point in $\Gr(d,r)(\kk)$. 
    To construct the inverse, note that a $\kk$-point $[\kk^{\oplus r} \twoheadrightarrow V]$ 
    defines an $S$-linear map 
    \begin{equation*}
        (\sfrac{S}{\mm})^{\oplus r} \xrightarrow{\alpha_{\mm}^{-1}} \kk^{\oplus r} \twoheadrightarrow V \simeq \kk^{\oplus d} \xrightarrow{\alpha_{\mm}} ( \sfrac{S}{\mm} )^{\oplus d}, 
    \end{equation*}
    which gives a point $[S^{\oplus r} \twoheadrightarrow V]$ in $\Zz_{\mm}(\kk)$ 
    after composing with $\pi_{\mm} \colon S^{\oplus r} \twoheadrightarrow ( \sfrac{S}{\mm} )^{\oplus r}$. 
    This extends to a natural isomorphism 
    $\phi \colon \Gr(d,r) \rightarrow \Zz_{\mm}$. 
\end{proof}
\begin{proposition}\label{r:prpzaquotdim}
    The dimension of the closed subscheme $\Zz_{\mm} \subseteq \Quot_d^r(\A^n)$ is $(r-d)d$.
\end{proposition}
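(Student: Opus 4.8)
The plan is to deduce the dimension count directly from the identification of $\Zz_{\mm}$ with the Grassmannian carried out in Proposition \ref{r:prpgrzmiso}. First I would invoke that proposition to obtain the natural isomorphism of functors $\phi \colon \Gr(d,r) \xrightarrow{\simeq} \Zz_{\mm}$. Since $\Gr(d,r)$ is representable by a $\kk$-variety of dimension $(r-d)d$ (recalled just above the statement of Proposition \ref{r:prpgrzmiso}) and $\Zz_{\mm}$ is representable by a closed subscheme of $Quot$ (Proposition \ref{r:prpzaquot} together with the discussion showing $\Zz_{\mm} = \bigcap_{a \in \mm} \Zz_a$), the Yoneda embedding (Corollary \ref{r:thmyonedaemb}) promotes $\phi$ to an isomorphism of the representing $\kk$-schemes. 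Dimension is an isomorphism invariant of schemes, so $\dim \Zz_{\mm} = \dim \Gr(d,r) = (r-d)d$.

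Concretely, I would phrase the argument as: let $Z$ be the closed subscheme of $Quot$ representing $\Zz_{\mm}$ and let $G$ be the $\kk$-variety representing $\Gr(d,r)$; composing the representing natural isomorphisms $\underline{Z} \simeq \Zz_{\mm}$ and $\underline{G} \simeq \Gr(d,r)$ with $\phi$ yields a natural isomorphism $\underline{G} \simeq \underline{Z}$, and fully faithfulness of $X \mapsto \underline{X}$ produces an isomorphism of schemes $G \simeq Z$. Hence $\dim Z = \dim G = (r-d)d$.

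There is essentially no obstacle here: all the content was already established in Proposition \ref{r:prpgrzmiso}, and the only thing to be careful about is citing the Yoneda embedding to pass from an isomorphism of functors to an isomorphism of schemes, rather than merely an equality of $\kk$-point sets. If one wanted a self-contained dimension computation one could alternatively recall that $\Gr(d,r)$ is covered by affine charts isomorphic to $\A^{(r-d)d}$, but given that the representability and dimension of the Grassmannian are taken as known in the excerpt, the one-line deduction via Proposition \ref{r:prpgrzmiso} is the cleanest route.
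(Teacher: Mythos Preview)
Your proposal is correct and matches the paper's proof exactly: the paper simply cites Proposition~\ref{r:prpgrzmiso} and concludes $\dim \Zz_{\mm} = \dim \Gr(d,r) = (r-d)d$. Your added remarks about invoking the Yoneda embedding to pass from the functor isomorphism to a scheme isomorphism are a reasonable elaboration of what the paper leaves implicit.
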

\begin{proof}
    By Proposition \ref{r:prpgrzmiso}, $\dim \Zz_{\mm} = \dim \Gr(d,r) = (r-d)d$. 
\end{proof}

By Lemma \ref{r:lemirreddim}, 
if $\dim U_{e_1} < \dim \Zz_{\mm}$ then $\Quot_d^r(\A^n)$ is reducible. 
By Propositions \ref{r:prpue1dim} and \ref{r:prpzaquotdim}, we deduce that 
this is the case only when $n < 1-d$. 
We discuss the irreducibility of the Quot scheme of two points.

\begin{proposition}\label{r:prpquotirred}
    Let $n \geq 1, r \geq 2$. Then $\Quot_2 ^r (\A^n)$ is irreducible. 
\end{proposition}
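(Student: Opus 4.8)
The plan is to show that $\Quot_2^r(\A^n)$ is covered by the open subscheme $U$ (the principal component's open locus, parametrizing quotients $M$ that come from an algebra $S/I$ with $I$ corresponding to a tuple of points) together with the closed locus $\Zz_{\mm}$ ranging over maximal ideals $\mm$, and then to argue that in the case $d=2$ the locus $\Zz_{\mm}$ is actually contained in the closure of $U$. First I would classify the $\kk$-points: a $\kk$-point of $\Quot_2^r(\A^n)$ is a surjection $S^{\oplus r}\twoheadrightarrow M$ with $\dim_\kk M=2$, and such an $M$ is either cyclic — hence $M\simeq S/I$ for some ideal $I\lhd S$ with $\dim_\kk S/I=2$ — or not cyclic, in which case $M\simeq (S/\mm)^{\oplus 2}$ for some maximal ideal $\mm$ (this is the $d=2$ analogue of the classification sketched for $\Bilin_{2,2,2}^{2,2}(\A^1)$ in the introduction, and it works because a $2$-dimensional module generated by $\geq 2$ elements on which $S$ acts must be killed by $\mm$). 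So the $\kk$-points split as $U_\kk \cup \bigcup_{\mm}\Zz_{\mm}(\kk)$, and by Lemma~\ref{r:lemirreddim} plus Hilbert's Nullstellensatz it suffices to show that each point of $\Zz_{\mm}$ lies in $\overline{U}$.

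Next I would fix a maximal ideal $\mm$, say $\mm=(x_1,\dots,x_n)$ without loss of generality (the $\GL$-action, or rather the translation action of $\A^n$, moves any $\kk$-point to the origin), and take a $\kk$-point $[S^{\oplus r}\twoheadrightarrow M\simeq(S/\mm)^{\oplus 2}]\in\Zz_{\mm}(\kk)$. I want to produce a one-parameter family inside $\Quot_2^r(\A^n)$ whose special fiber is this point and whose general fiber lies in $U_\kk$. The idea: choose coordinates so that the surjection $\kk^{\oplus r}\twoheadrightarrow\kk^{\oplus 2}$ (the associated Grassmannian point from Proposition~\ref{r:prpgrzmiso}) is the projection onto the first two coordinates; then deform the $S$-module structure on $\kk^{\oplus 2}$ by letting $x_1$ act as $\mathrm{diag}(0,t)$ (and $x_2,\dots,x_n$ act as $0$), giving for $t\neq 0$ the module $\kk[x_1]/(x_1(x_1-t))\simeq S/((x_1)(x_1-t))$, which is a tuple of two distinct points and hence in $U_\kk$, while at $t=0$ we recover $(S/\mm)^{\oplus 2}$ with the same quotient map. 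Making this precise means exhibiting a morphism $\Spec\kk[t]\to Quot$, i.e. a flat family of quotients of $S_{\kk[t]}^{\oplus r}$, which is routine: the module is $\kk[t]^{\oplus 2}$ with the stated action, free over $\kk[t]$, and the surjection from $S_{\kk[t]}^{\oplus r}$ is the evident one extending the chosen matrix. Since the generic point of this family lands in the open set $U$ and the family is irreducible, the special point lies in $\overline{U}$.

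The main obstacle I anticipate is not the construction of the deformation but the bookkeeping needed to see that \emph{every} point of $\Zz_{\mm}$ — not just the ones where the Grassmannian data is in "general position" — is reached this way, i.e. that the deformation above, together with the action of $\GL_r$ on the source $S^{\oplus r}$ and of $\A^n$ by translation, sweeps out a set whose closure contains all of $\Zz_{\mm}$. Concretely one should check that the chosen basis of $\kk^{\oplus 2}$ can always be arranged (using that $\Gr(2,r)$ is irreducible and homogeneous under $\GL_r$, so it is enough to handle one point in each orbit) and that the whole picture is compatible with the gluing of the $\Zz_{\mm}$ over varying $\mm$. An alternative, possibly cleaner, route is dimension-counting combined with a connectedness/smoothness input: $\overline{U}$ has dimension $nd+(r-1)d = 2n+2(r-1)$ by Proposition~\ref{r:prpue1dim}, while $\Zz_{\mm}$ together with the choice of $\mm\in\A^n$ forms a family of dimension $(r-2)\cdot 2 + n = 2r-4+n$, which is strictly smaller than $\dim\overline{U}$ when $n\geq 1$; so $\Zz_{\mm}$ cannot be a component, and since the $\kk$-points are exhausted by $U_\kk$ and $\bigcup_\mm\Zz_{\mm}(\kk)$, irreducibility of $\overline{U}$ (it is a closure of an irreducible set, being a vector bundle over $T$, which is irreducible as the closure of the irreducible variety of configurations of distinct points) forces $\Quot_2^r(\A^n)=\overline{U}$. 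I would present the dimension argument as the backbone and use the explicit deformation only to confirm set-theoretic containment where needed.
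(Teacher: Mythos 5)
Your core strategy matches the paper's: decompose the $\kk$-points, and for points not already in the locus $U$ of tuples of distinct points, exhibit an explicit one-parameter degeneration landing them in $\overline{U}$. Your deformation for the totally degenerate case (fix the Grassmannian data and let $x_1$ act as $\mathrm{diag}(0,t)$) is the same in spirit as the paper's, which defines $p(t)\colon S^{\oplus r}\twoheadrightarrow S/\mm\oplus S/\mm_t$ with the same matrix as $p$.

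However, your decomposition $\Quot_2^r(\A^n)(\kk)=U_\kk\cup\bigcup_{\mm}\Zz_{\mm}(\kk)$ is too coarse under the paper's definitions: $U_\kk$ is the locus where $M$ corresponds to a tuple of $d$ \emph{distinct} $\kk$-points, so a cyclic module of the form $M\simeq S/\mm^2\simeq\kk[x]/(x^2)$ (i.e.\ $\Supp M=\{\mm\}$ with $\mm M\neq 0$, $\mm^2 M=0$) is in neither $U_\kk$ nor $\Zz_\mm(\kk)$. This is precisely the third case the paper's proof handles explicitly, by taking the flat limit of $S/I_t$ with $I_t=\mm\cap\mm_t$. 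Your classification into ``cyclic'' versus ``not cyclic'' is fine, but identifying the cyclic locus with $U_\kk$ silently assumes what must be proved for fat points (that $\kk[x]/(x^2)$-type quotients are smoothable), so the case is not actually covered.

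The dimension-counting ``alternative'' in your last paragraph does not close this gap. Knowing that $\dim\Zz_\mm<\dim\overline{U}$ (and similarly that the fat-point locus has smaller dimension) only shows these loci are not themselves irreducible components; it does not place them inside $\overline{U}$. A priori, $\Quot_2^r(\A^n)$ could have a second component of some dimension containing, say, all of $\Zz_\mm$ as a proper subset, and Lemma~\ref{r:lemirreddim} gives reducibility criteria, not irreducibility criteria. You still need the explicit degenerations (your first route) for all three cases to conclude; dimension counting alone is not a ``cleaner route'' here, it is insufficient.
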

\begin{proof}
    Let $[p \colon S^{\oplus r} \twoheadrightarrow M = \sfrac{S^{\oplus r}}{K}] \in \Quot_2 ^r (\A^n)(\kk)$. 
    It suffices to show that every $[p]$ of this form arises as a limit of tuples of points, 
    that is, points of $\U \subseteq \Quot_2 ^r (\A^n)$. 
    Since $\dim _{\kk} M = 2$, $\Supp M$ consists of at most two points 
    corresponding to maximal ideals $\mm_i \lhd S$. 
    If $\Supp M = \{ \mm_1, \mm_2 \}$ with $\mm_1 \neq \mm_2$, 
    then $\dim_{\kk} M_{\mm_1} = \dim_{\kk} M_{\mm_1} =1$, so 
    $M$ corresponds to a tuple of points given by $\{ \mm_1, \mm_2 \}$. 
    If $\Supp M = \{ \mm \}$, then either $\mm M = 0$ or $\mm^2 M = 0$.
    
    If $\mm M = 0$, then $[p]$ defines a point in the locus $\Zz_{\mm}$, 
    so $M \simeq (\sfrac{S}{\mm})^{\oplus 2}$. Without loss of generality assume 
    $\mm = (x_1, \ldots, x_n)$. Let $e_1, \ldots, e_r$ be the basis of $S^{\oplus r}$ 
    such that each $e_i$ corresponds to $1 \in S$. 
    Let $\varepsilon_1, \varepsilon_2$ be the basis of $M$ 
    with each $\varepsilon_i$ corresponding to $1 \in \sfrac{S}{\mm}$. 
    From Proposition \ref{r:prpgrzmiso} we have the isomorphism 
    $\Gr(2,r) \simeq \Zz_{\mm}$,
    so $[p \colon S^{\oplus r} \twoheadrightarrow M = (\sfrac{S}{\mm})^{\oplus 2}] \in \Zz_{\mm}(\kk)$ 
    is determined by a $2 \times r$ full rank matrix in bases 
    $e_1, \ldots, e_r$ of $S^{\oplus r}$,  
    and $\varepsilon_1, \varepsilon_2$ of $M$. 
    Consider 
    \begin{equation*}
        p(t) \colon S^{\oplus r} \twoheadrightarrow M_t \simeq 
        \sfrac{S}{\mm} \oplus \sfrac{S}{\mm_t} = \sfrac{S}{\mm} \oplus \sfrac{S}{(x_1 -t, x_2, \ldots, x_n)}, 
    \end{equation*}
    given by the same matrix as $p$, 
    where $M_t$ is equipped with the basis $\widetilde{\varepsilon}_1, \widetilde{\varepsilon}_2$ 
    corresponding to $1 \in \sfrac{S}{\mm}$ and $1 \in \sfrac{S}{\mm_t}$, respectively. 
    Then 
    \begin{equation*}
        p = \lim\limits_{t \rightarrow 0} \, p(t).
    \end{equation*}

    If $\mm^2 M = 0$, then $M$ is isomorphic to $\sfrac{\kk[x]}{(x^2)}$. 
    Let $\eta_1, \eta_2$ be the basis corresponding to $1, x \in \sfrac{\kk[x]}{(x^2)}$. 
    We will show that this arises as a flat limit of $\sfrac{S}{I_t}$, 
    where $I_t = \mm \cap \mm_t = \mm \cdot \mm_t$. 
    The ideal $I_t$ contains elements of the forms
    \begin{equation*}
        x_i (x_1-t) \quad \text{and} \quad x_i \ \text{ for } i \neq 1,
    \end{equation*}
    so $x_2, \ldots, x_n \in I_t$ and in $\sfrac{S}{I_t}$ we have $x_1 x_i \equiv t x_i$, that is,
    \begin{equation*}
        x_1^2 \equiv t x_1 \quad \text{and} \quad x_1 x_i \equiv 0 \ \text{ for } i \neq 1.
    \end{equation*}
    Letting $t \rightarrow 0$, we get $I_0 = \lim\limits_{t \rightarrow 0} \, I_t$ 
    such that 
    \begin{equation*}
        x_1^2 \equiv 0, \ x_1 x_i \equiv 0 \ \text{ and } \ x_2, \ldots, x_n \in I_0.
    \end{equation*}
    Hence $\sfrac{S}{I_0} \simeq \sfrac{\kk[x]}{(x^2)}$. 
    To construct the limit 
    \begin{equation*}
        [p \colon S^{\oplus r} \twoheadrightarrow M \simeq \sfrac{\kk[x]}{(x^2)}] = 
        \lim\limits_{t \rightarrow 0} \, [p(t) \colon S^{\oplus r} \twoheadrightarrow M \simeq \sfrac{S}{I_t}],
    \end{equation*}
    let $\widetilde{\eta}_1, \widetilde{\eta}_2$ be the basis corresponding to $1, x_1 \in \sfrac{S}{I_t}$ 
    and define $p(t)$ by the same matrix as we want $p$ to be, 
    in bases $\{ \widetilde{\eta}_i \}$ and $\{ \eta_i \}$, respectively. 
\end{proof}

\subsection{Tensors}\label{r:sectensors}

We review the notions from the theory of tensors. 
Fix $\kk$-vector spaces $A,B,C$ of finite dimension $d$. 
A \emph{tensor} is an element of $A \otimes B \otimes C$.

\begin{definition}
    Let $\tau \in A \otimes B \otimes C$. 
    We say that $\tau$ is \emph{$C$-concise} if the corresponding map 
    $A^{\vee} \otimes B^{\vee} \rightarrow C$ is surjective. 
    Equivalently, is there is no proper subspace $C' \subseteq C$ 
    such that $\tau \in A \otimes B \otimes C'$. 
    We say that $\tau$ is \emph{concise} if it is concise on each coordinate. 
\end{definition}

\begin{definition}
    Two tensors $\tau, \tau' \in A \otimes B \otimes C$ are isomorphic if 
    they are in the same orbit of $\GL (A) \times \GL (B) \times \GL (C) $ 
    acting by change of basis.  
\end{definition}

\begin{definition}
    Let $\tau \in A \otimes B \otimes C$. 
    We say that $\tau$ is of \emph{rank} 1 if $\tau = a \otimes b \otimes c$ 
    for some $a \in A, b \in B, c \in C$. 
    We say that $\tau$ is of \emph{rank $r$} if $r$ is minimal such that 
    $\tau$ can be written as a sum of $r$ rank 1 tensors; 
    and of \emph{border rank $r$} if $r$ is minimal such that 
    $\tau$ can be represented as a limit of $r$ tensors of rank 1.
\end{definition}
\noindent
The notion of border rank has a geometric interpretation in terms of secant varieties. 
\begin{definition}\label{r:defsecant}
    Let $V$ be a vector space over $\kk$ and let $X \subseteq \PP V$ 
    be a projective variety. For $r \in \Z_{+}$ define the locus of 
    the $r$-th secant planes 
    \begin{equation*}
        \sigma_r ^{\circ} (X) = \bigcup\limits_{p_i \in X} \langle p_1, \ldots, p_r \rangle \subseteq \PP V, 
    \end{equation*}
    and define the \emph{$r$-th secant variety of $X$} as its closure 
    \begin{equation*}
        \sigma_r (X) = \overline{\sigma_r ^{\circ} (X)} \subseteq \PP V.
    \end{equation*}
\end{definition}
\noindent
Let $X = \PP (A) \times \PP (B) \times \PP (C)$ and consider 
the Segre embedding 
\begin{equation*}
    X \hookrightarrow \PP(A \otimes B \otimes C), \quad ( [a], [b], [c] ) \mapsto [a \otimes b \otimes c].
\end{equation*}
Then the affine cone $\widehat{\sigma}_r$ over $\sigma_r := \sigma_r(X)$ parametrizes 
tensors of border rank at most $r$. 
We can moreover describe the Segre secant variety in terms of concise tensors. 

\begin{definition}\label{r:defminbrk}
    Let $\tau \in A \otimes B \otimes C$ with 
    $\dim_{\kk} A = \dim_{\kk} B = \dim_{\kk} C = d < \infty$. 
    We say that $\tau$ is of \emph{minimal border rank} if it is concise and of border rank $d$. 
\end{definition}
\noindent
We can also relate the secant varieties to the action of $\GL (A) \times \GL (B) \times \GL (C)$. 
\begin{definition}\label{r:defunittensor}
    Let $A,B,C$ be $\kk$-vector spaces with 
    $\dim_{\kk} A = \dim_{\kk} B = \dim_{\kk} C = d < \infty$. 
    Fix bases $\{ a_i \}, \{ b_i \}, \{ c_i \}$ of $A,B,C$, respectively. 
    A tensor of the form 
    \begin{equation*}
        \mu_d = a_1 \otimes b_1 \otimes c_1 + \ldots + a_d \otimes b_d \otimes c_d
    \end{equation*}
    is called a \emph{unit tensor}. 
\end{definition}
\noindent
Note that unit tensors are concise of rank $d$.
Moreover, all unit tensors are isomorphic, 
so we can take any tensor of this form and call it the unit tensor.

\begin{proposition}\label{r:prpunitorb}
    The variety $\widehat{\sigma}_r$ coincides with the closure of the orbit 
    of the unit tensor under the action of $\GL (A) \times \GL (B) \times \GL (C) $.
\end{proposition}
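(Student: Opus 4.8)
The plan is to prove the two inclusions $\overline{G\cdot\mu}\subseteq\widehat{\sigma}_r$ and $\widehat{\sigma}_r\subseteq\overline{G\cdot\mu}$, where $G=\GL(A)\times\GL(B)\times\GL(C)$ and $\mu=\sum_{i=1}^{r}a_i\otimes b_i\otimes c_i$ is the unit tensor of Definition~\ref{r:defunittensor}, working throughout with Zariski closures. I use the identification, immediate from Definition~\ref{r:defsecant} and the remark following it, of $\widehat{\sigma}_r$ with $\overline{R}$, the closure of the set $R$ of tensors of rank at most $r$; concretely, the affine cone over a projective variety commutes with Zariski closure, and the affine cone over $\sigma_r^{\circ}(X)$ for $X$ the Segre variety is exactly $R=\{\sum_{i=1}^{r}x_i\otimes y_i\otimes z_i\mid x_i\in A,\ y_i\in B,\ z_i\in C\}$. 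The inclusion $\overline{G\cdot\mu}\subseteq\widehat{\sigma}_r$ is then immediate: for $g=(g_A,g_B,g_C)\in G$ the tensor $g\cdot\mu=\sum_{i=1}^{r}(g_Aa_i)\otimes(g_Bb_i)\otimes(g_Cc_i)$ is a sum of $r$ rank-one tensors, so $g\cdot\mu\in R\subseteq\widehat{\sigma}_r$, and $\widehat{\sigma}_r$ is closed.

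For the reverse inclusion the key step is the explicit orbit description
\[
G\cdot\mu=\bigl\{\,{\textstyle\sum_{i=1}^{r}x_i\otimes y_i\otimes z_i}\ :\ \{x_1,\dots,x_r\},\ \{y_1,\dots,y_r\},\ \{z_1,\dots,z_r\}\ \text{each linearly independent}\,\bigr\}.
\]
Here $\subseteq$ follows from the previous paragraph together with the fact that $g_A$ preserves linear independence, and $\supseteq$ follows by extending each independent system to a basis of the corresponding space and choosing $g_A,g_B,g_C$ with $a_i\mapsto x_i$, $b_i\mapsto y_i$, $c_i\mapsto z_i$ — possible precisely because $r\le d=\dim A=\dim B=\dim C$. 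Next I consider the polynomial map $\Phi\colon(A\times B\times C)^{r}\to A\otimes B\otimes C$, $((x_i,y_i,z_i))_i\mapsto\sum_i x_i\otimes y_i\otimes z_i$, whose image is $R$, so that $\overline{\im \Phi}=\widehat{\sigma}_r$. The locus $W\subseteq(A\times B\times C)^{r}$ on which all three systems are linearly independent is open and nonempty (again using $r\le d$), hence dense in the irreducible source, and $\Phi(W)=G\cdot\mu$ by the orbit description. Since $\Phi$ is continuous and $W$ is dense, $\overline{\Phi(W)}=\overline{\im \Phi}$, i.e. $\overline{G\cdot\mu}=\widehat{\sigma}_r$; this gives the remaining inclusion and finishes the proof, and as a byproduct both sides are irreducible, being closures of images of the irreducible varieties $G$ and $(A\times B\times C)^{r}$.

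The argument is largely bookkeeping once this framework is fixed. The one place to take care is the orbit description together with the nonemptiness of $W$, both of which rely on $r\le d$ — equivalently, on $\mu$ genuinely being a sum of $r$ rank-one tensors built from linearly independent vectors in each of the three factors. I would also make sure to spell out the identity $\widehat{\sigma}_r=\overline{R}$ directly from the definitions, so that no analytic limit arguments are needed and everything stays within the purely algebraic setup of Section~\ref{r:sectensors}; that is the point where I expect a careful reader to want the most detail.
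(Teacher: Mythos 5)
Your proof is correct, and it fills a genuine gap: the paper states Proposition~\ref{r:prpunitorb} without proof, so there is no ``paper's own argument'' to compare against. Your chain of observations --- that $\widehat{\sigma}_r$ is the Zariski closure of the image of the parametrizing morphism $\Phi\colon (A\times B\times C)^r\to A\otimes B\otimes C$, that the orbit $G\cdot\mu$ is exactly $\Phi(W)$ for the dense open locus $W$ of triples of linearly independent systems, and that $\overline{\Phi(W)}=\overline{\im\Phi}$ because $\Phi$ is continuous and $W$ is dense in the irreducible source --- is exactly the standard argument, kept entirely in the Zariski setting as you intended, and it establishes both inclusions cleanly together with the irreducibility of $\widehat{\sigma}_r$ as a useful byproduct.

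One small point worth tightening in the writeup. Definition~\ref{r:defunittensor} fixes the unit tensor to be $\mu_d=\sum_{i=1}^d a_i\otimes b_i\otimes c_i$ with exactly $d=\dim A=\dim B=\dim C$ summands, whereas you work with $\mu=\sum_{i=1}^r a_i\otimes b_i\otimes c_i$ and invoke ``$r\le d$.'' For $r<d$ the tensor you call $\mu$ is not the unit tensor of the paper (and indeed the proposition would be \emph{false} for $r<d$: the orbit closure of $\mu_d$ is $\widehat{\sigma}_d$, which strictly contains $\widehat{\sigma}_r$). The statement as written in the paper is only correct, and only used, in the case $r=d$, which is the $\sigma_d(\PP(\kk^d)^3)$ appearing in Section~\ref{r:secbilinisredu}. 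So you should either specialize your proof to $r=d$ throughout, or state explicitly at the outset that the proposition is understood with $r=d$, so that ``linearly independent systems of size $r$'' means ``bases.'' With that clarification the proof is complete and correct.
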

\noindent
In other words, the closure of the set of all tensors of rank at most $r$ 
is the same as the closure of the set of all concise tensors of rank at most $r$. 

Now we review some results that will be useful when we discuss the irreducibility of $\Bilin_{d,d,d}^{r_1, r_2}(\A^n)$. 
\begin{lemma}\label{r:lemconopen}
    The subspace of concise tensors in $\PP(A \otimes B \otimes C)$ is open.
\end{lemma}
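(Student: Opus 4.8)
Proof proposal for Lemma \ref{r:lemconopen} (the subspace of concise tensors in $\PP(A\otimes B\otimes C)$ is open).

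The plan is to reduce conciseness to a rank condition on three matrices (the three "flattenings" of a tensor) and then invoke the standard fact that maximal rank is an open condition. Fix bases of $A,B,C$, so a tensor $\tau$ is a $d\times d\times d$ array of scalars, and $\PP(A\otimes B\otimes C)\simeq \PP^{d^3-1}$. The tensor $\tau$ induces three contraction maps: $\tau_A\colon A^\vee\to B\otimes C$, $\tau_B\colon B^\vee\to A\otimes C$, and $\tau_C\colon C^\vee\to A\otimes B$. By the definition of conciseness, $\tau$ is $C$-concise exactly when the map $A^\vee\otimes B^\vee\to C$ is surjective, equivalently when $\tau_C\colon C^\vee\to A\otimes B$ is injective, equivalently when the $d^2\times d$ matrix representing $\tau_C$ has full rank $d$; and symmetrically for $A$- and $B$-conciseness. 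So $\tau$ is concise if and only if all three of these flattening matrices have rank exactly $d$ (their maximal possible value).

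First I would record that each entry of each of the three flattening matrices is a linear form in the $d^3$ coordinates of $\tau$, hence these matrices depend algebraically (indeed linearly) on $\tau$. Next I would use that, for a matrix whose entries are regular functions on a variety, the locus where the rank is at least a fixed value $k$ is open, since it is the complement of the vanishing locus of all $k\times k$ minors, and these minors are again regular functions. Applying this with $k=d$ to each of $\tau_A,\tau_B,\tau_C$ gives three open subsets of the affine space $A\otimes B\otimes C$ (equivalently, of $\PP(A\otimes B\otimes C)$, since the conditions are homogeneous in $\tau$ and so descend to the projectivization). The set of concise tensors is the intersection of these three open sets, hence open.

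The only point requiring a little care is the passage to the projective space: conciseness is invariant under rescaling $\tau$ by a nonzero scalar (each flattening matrix just gets scaled, which does not change its rank), so the open cone of concise tensors in $A\otimes B\otimes C\setminus\{0\}$ is the preimage of a well-defined open subset of $\PP(A\otimes B\otimes C)$ under the quotient map, and is therefore open there. I do not expect a genuine obstacle here; the main thing to get right is simply the dictionary between the three conciseness conditions and the full-rank conditions on the three flattenings, together with the observation that "rank $\geq d$" is the generic (hence open) behavior for a matrix of size $d^2\times d$.
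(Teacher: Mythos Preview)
Your proposal is correct and follows essentially the same approach as the paper: both reduce conciseness to the intersection of three conditions, observe that each is a full-rank condition on a $d^2\times d$ flattening matrix whose entries are linear in the coordinates of $\tau$, and conclude openness via nonvanishing of $d\times d$ minors. Your write-up is in fact slightly more careful than the paper's, since you explicitly address the descent to $\PP(A\otimes B\otimes C)$ via homogeneity.
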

\begin{proof}
    By definition
    \begin{equation*}
        \left\{ \tau \in A \otimes B \otimes C \mid \tau \text{ is concise} \right\} = 
        \{ \tau \text{ is $A$-concise} \} \cap 
        \{ \tau \text{ is $B$-concise} \} \cap 
        \{ \tau \text{ is $C$-concise} \},
    \end{equation*}
    so it suffices to prove that $\left\{ \tau \text{ is $C$-concise} \right\}$ is open. 
    Again, by definition we have 
    \begin{equation*}
        \{ \tau \in A \otimes B \otimes C \mid \tau \text{ is $C$-concise} \} = 
        \{ \text{the induces map $\tau^{\vee} \colon A^{\vee} \otimes B^{\vee} \rightarrow C$ is surjective} \}.
    \end{equation*}
    The map $\tau^{\vee} \colon A^{\vee} \otimes B^{\vee} \rightarrow C$ is given by a $d^2 \times d$ matrix 
    and surjectivity is equivalent to this matrix having a non-zero minor. 
\end{proof}

\begin{proposition}\label{r:prpsecantdim}
    Let $X \subseteq \PP ^N$ be an irreducible projective variety. Then 
    \begin{equation*}
        \dim \sigma_r(X) \leq \min \, \{ N, \, r ( \dim X +1) -1 \}.
    \end{equation*}
\end{proposition}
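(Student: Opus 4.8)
The plan is to handle the two terms of the minimum separately. The bound $\dim\sigma_r(X)\le N$ is immediate, since $\sigma_r(X)$ is by construction a closed subvariety of $\PP^N$. The substance is the bound $\dim\sigma_r(X)\le r(\dim X+1)-1$, which I would obtain by realizing $\sigma_r(X)$ as the projectivization of the closure of the image of an explicit morphism whose source has dimension $r(\dim X+1)$.

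Pass to affine cones: let $\widehat X\subseteq\A^{N+1}$ be the affine cone over $X$, so $\widehat X$ is irreducible of dimension $\dim X+1$. Consider the addition morphism
\[
    s\colon \widehat X\times\cdots\times\widehat X \longrightarrow \A^{N+1},\qquad (v_1,\dots,v_r)\longmapsto v_1+\dots+v_r
\]
with $r$ factors in the source. First I would verify that $\im(s)$ is exactly the affine cone $\widehat{\sigma_r^\circ(X)}$ over $\sigma_r^\circ(X)$: for fixed $p_1,\dots,p_r\in X$, letting each $v_i$ range over the line in $\A^{N+1}$ lying over $p_i$ makes $v_1+\dots+v_r$ sweep out the linear span of those lines, i.e. the cone over $\langle p_1,\dots,p_r\rangle$; conversely any value of $s$ lies in such a cone, after dropping the zero summands and repeating a point of $X$ to refill the tuple if necessary. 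Taking the union over all choices of the $p_i$ gives $\im(s)=\widehat{\sigma_r^\circ(X)}$, and since forming the affine cone commutes with Zariski closure, $\overline{\im(s)}$ is the affine cone $\widehat C$ over $\sigma_r(X)=\overline{\sigma_r^\circ(X)}$.

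Then I would invoke the elementary fact that the image of a morphism of varieties has dimension at most that of the source (via the fiber–dimension theorem, say), giving
\[
    \dim\widehat C \;=\; \dim\overline{\im(s)} \;\le\; \dim\bigl(\widehat X^{\times r}\bigr) \;=\; r(\dim X+1).
\]
Projectivizing, $\dim\sigma_r(X)=\dim\widehat C-1\le r(\dim X+1)-1$, and combining with $\dim\sigma_r(X)\le N$ completes the proof. The only points requiring care are bookkeeping ones — checking that $\overline{\im(s)}$ really is the cone over $\sigma_r(X)$ (that the union of spans is exactly the image of $s$, and that coning commutes with closure) and applying the image-dimension inequality in the correct generality — rather than any substantial geometric obstacle; I do not expect a real difficulty beyond these verifications.
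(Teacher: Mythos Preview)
Your argument is correct and is the standard one. The paper's own proof is a one-line citation: it simply says the bound ``follows directly from the definition of expected dimension'' in the reference \cite{carlini}. The expected dimension of $\sigma_r(X)$ is precisely $\min\{N,\,r(\dim X+1)-1\}$, and the parameter count underlying that definition is exactly the addition-map-on-cones argument you wrote out. So you have supplied the self-contained justification that the paper defers to the literature; there is no substantive difference in approach, only in level of detail.
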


\begin{proof}
    Follows directly from the definition of expected dimension \cite[Definition 2.7]{carlini}.
\end{proof}

\section{The Bilinear scheme}\label{r:secbilin}

We have defined the Bilinear functor $\Bilin_{d_1,d_2,d_3}^{r_1, r_2}(\A^n)$ 
as the functor whose $\Bbbk$-points $[p_1,p_2,\pi]$ correspond to surjections of $S$-modules 
\begin{center}
    $p_1 \colon S^{\oplus r_1} \twoheadrightarrow M_1 = \sfrac{S^{\oplus r_1}}{K_1} \quad$ 
    and $\quad p_2 \colon S^{\oplus r_2} \twoheadrightarrow M_2 = \sfrac{S^{\oplus r_2}}{K_2}$,

    $\text{together with } \pi \colon M_1 \otimes_S M_2 \twoheadrightarrow M_3 = \sfrac{S^{\oplus r_1 r_2}}{K_3}$, 
\end{center}
\noindent
such that $\dim_{\Bbbk} M_i = d_i$. 
We identify surjections with equal kernels. 
For an arbitrary $\kk$-scheme $X$ the Bilinear functor is defined as 
\[
  \Bilin_{d_1,d_2,d_3}^{r_1, r_2}(\A^n)(X) = 
  \left\{\begin{minipage}[c]{\widthof{\text{such that $\M_j$ is locally free of rank $d_j$ over $\Oo_X$ for $j=1,2,3$}}}
    \centering
    \text{\text{quasicoherent quotients of $(\Oo_X \otimes_{\kk} S)$-modules }}\\
    \text{$\Oo_X^{\oplus r_i} \otimes S \xrightarrowdbl[]{p_i} \M_i$ for $i=1,2$;}
    \text{together with a quotient $\M_1 \otimes \M_2 \xrightarrowdbl[]{\pi} \M_3$;}\\
    \text{such that $\M_j$ is locally free of rank $d_j$ over $\Oo_X$ for $j=1,2,3$}
  \end{minipage}\right\}_{/ \sim}
\]
where we identify surjections with equal kernels. 
For an affine scheme $X=\Spec A$ we write 
\begin{equation*}
    \Bilin_{d_1,d_2,d_3}^{r_1, r_2}(\A^n)(A) = \Bilin_{d_1,d_2,d_3}^{r_1, r_2}(\A^n)(\Spec A).
\end{equation*}
In that case we have
\[
  \Bilin_{d_1,d_2,d_3}^{r_1, r_2}(\A^n)(A) = 
  \left\{\begin{minipage}[c]{\widthof{\text{such that $M_j$ is locally free of rank $d_j$ over $A$ for $j=1,2,3$}}}
    \centering
    \text{$S_A$-linear surjections $A^{\oplus r_i} \otimes S \xrightarrowdbl[]{p_i} M_i$ for $i=1,2$;}
    \text{together with a surjection $M_1 \otimes_{S_A} M_2 \xrightarrowdbl[]{\pi} M_3$;}\\
    \text{such that $M_j$ is locally free of rank $d_j$ over $A$ for $j=1,2,3$}
  \end{minipage}\right\}_{/ \sim}
\]

\subsection{Relation to the Hilbert and Quot schemes}\label{r:sechilbbilin}

The Bilinear functor can be viewed as a generalization of the Hilbert scheme in the following way.
\begin{proposition}
    Let $d = d_i$ for $i=1,2,3$ and $r_1=r_2=1$. Then there is an isomorphism 
\begin{equation*}
    \Bilin_{d,d,d}^{1, 1}(\A^n) \simeq \Hilb_d(\A^n).
\end{equation*}
\end{proposition}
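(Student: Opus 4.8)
The plan is to write down the natural transformation
\[
  \Psi\colon \Bilin_{d,d,d}^{1,1}(\A^n) \longrightarrow \Hilb_d(\A^n)
\]
that remembers only the first quotient $p_1$, to check that it is a bijection on $X$-points for every $\kk$-scheme $X$, and to conclude by the Yoneda embedding (Corollary \ref{r:thmyonedaemb}). For a $\kk$-scheme $X$ and a point $[p_1,p_2,\pi]\in\Bilin_{d,d,d}^{1,1}(\A^n)(X)$, the surjection $p_1\colon \Oo_X\otimes_\kk S\twoheadrightarrow\M_1$ presents $\M_1$ as $\Oo_{Z_1}$ for a closed subscheme $Z_1\hookrightarrow X\times\A^n$, and by hypothesis $\Oo_{Z_1}$ is locally free of rank $d$ over $\Oo_X$, i.e. $[Z_1]\in\Hilb_d(\A^n)(X)$; set $\Psi_X([p_1,p_2,\pi])=[Z_1]$. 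Naturality in $X$ is immediate, since both $\Bilin$ and $\Hilb$ act on morphisms by pulling back the relevant quotients.

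The heart of the argument is that with $r_1=r_2=1$ the entire datum is forced to be concentrated on a single closed subscheme. Writing $\M_2=\Oo_{Z_2}$ and $\M_3=\Oo_{Z_3}$ as above, one identifies $\M_1\otimes_{\Oo_X\otimes_\kk S}\M_2=\Oo_{Z_1\cap Z_2}$ with $Z_1\cap Z_2$ the scheme-theoretic intersection inside $X\times\A^n$, so $\pi$ exhibits $Z_3$ as a closed subscheme of $Z_1\cap Z_2$. I will use the elementary lemma that a surjection $\E_1\twoheadrightarrow\E_2$ of locally free $\Oo_Y$-modules of the same finite rank $d$ is an isomorphism (locally it becomes a surjective endomorphism of $\Oo_Y^{\oplus d}$, and a surjective endomorphism of a finitely generated module is an automorphism). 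Applying it to the composite $\Oo_{Z_1}\twoheadrightarrow\Oo_{Z_1\cap Z_2}\xrightarrow{\ \pi\ }\Oo_{Z_3}$ of surjections of locally free rank-$d$ sheaves shows this composite is an isomorphism, so $\Oo_{Z_1}\to\Oo_{Z_1\cap Z_2}$ is injective as well as surjective, forcing $Z_1\cap Z_2=Z_1$, i.e. $Z_1\subseteq Z_2$; by symmetry $Z_1=Z_2$, and then $\pi\colon\Oo_{Z_1}\to\Oo_{Z_3}$ is an isomorphism with $Z_3\subseteq Z_1$, so $Z_1=Z_2=Z_3$ and $\pi$ is the identity. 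Under the ``equal kernels'' identification built into the definition of $\Bilin$, this shows that $[p_1,p_2,\pi]$ is recovered from $Z_1$ alone, so $\Psi_X$ is injective.

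For surjectivity of $\Psi_X$, given $[Z]\in\Hilb_d(\A^n)(X)$ I take $p_1=p_2$ to be the canonical surjection $\Oo_X\otimes_\kk S\twoheadrightarrow\Oo_Z$ and $\pi=\id\colon\Oo_Z\otimes_{\Oo_X\otimes_\kk S}\Oo_Z=\Oo_Z\to\Oo_Z$; all three modules are $\Oo_Z$, locally free of rank $d$ over $\Oo_X$, so this lies in $\Bilin_{d,d,d}^{1,1}(\A^n)(X)$ and maps to $[Z]$. Hence each $\Psi_X$ is bijective and $\Psi$ is a natural isomorphism, which together with the representability of $\Hilb_d(\A^n)$ gives the asserted isomorphism of schemes by Corollary \ref{r:thmyonedaemb}. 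The only real obstacle is the rank-collapse step of the second paragraph: one must get the tensor product $\M_1\otimes\M_2$ and its identification with $\Oo_{Z_1\cap Z_2}$ right and invoke the locally-free-rank lemma over a general base; everything else is bookkeeping specific to the degenerate case $r_1=r_2=1$.
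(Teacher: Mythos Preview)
Your argument is correct and follows the same core idea as the paper: the surjection $\M_1\twoheadrightarrow\M_1\otimes\M_2\twoheadrightarrow\M_3$ is between objects of the same ``size'' $d$, hence an isomorphism, forcing all three quotients to coincide. The difference is one of scope. The paper argues only on $\kk$-points, using $\dim_\kk$ to run the rank-collapse step, and leaves the passage to an isomorphism of functors implicit. You instead work over an arbitrary base $X$, replace the dimension count by the locally-free-of-equal-rank lemma, and explicitly verify that the natural transformation $\Psi$ is bijective on $X$-points for every $X$. Your version is therefore the more complete of the two; the paper's proof is really a sketch that your argument fills in.
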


\begin{proof}
    The $\kk$-points of $\Bilin_{d,d,d}^{1, 1}(\A^n)$ correspond to 
    \begin{center}
        $p_1 \colon S^{\oplus 1} \twoheadrightarrow M_1 = \sfrac{S^{\oplus 1}}{J_1}$ 
        and $p_2 \colon S^{\oplus 1} \twoheadrightarrow M_2=\sfrac{S^{\oplus 1}}{J_2}$, 
        where $J_1$, $J_2$ are ideals in $S$, 

        together with $\pi \colon M_1 \otimes_S M_2 \twoheadrightarrow M_3$.
    \end{center}
    \noindent
    We have 
    \begin{equation}\label{r:eqdimineq}
        \dim_{\kk} \sfrac{S^{\oplus 1}}{J_1} \otimes_S \sfrac{S^{\oplus 1}}{J_2} = \dim_{\kk} \sfrac{S^{\oplus 1}}{(J_1 + J_2)} \leq \dim_{\kk} \sfrac{S^{\oplus 1}}{J_1} = \dim_{\kk} \sfrac{S^{\oplus 1}}{J_2} = \dim_{\kk} M_3.
    \end{equation} 
    The surjection $\pi \colon \sfrac{S^{\oplus 1}}{(J_1 + J_2)} \twoheadrightarrow M_3$ 
    exists only if $\dim_{\kk} \sfrac{S^{\oplus 1}}{(J_1 + J_2)} \geq \dim_{\kk} M_3$, 
    so the inequality in (\ref{r:eqdimineq}) must be an equality and the map 
    $\pi \colon \sfrac{S^{\oplus 1}}{(J_1 + J_2)} \simeq \sfrac{S^{\oplus 1}}{J_1} \twoheadrightarrow M_3$ 
    must be an isomorphism of $S$-modules. 
    Thus, $J_1 = J_2 = J_3$ and since we identify surjections with equal kernels, 
    the point $[p_1,p_2,\pi]$ is uniquely determined by a single map 
    $p_1 = p_2 = p \colon S \twoheadrightarrow M = \sfrac{S}{J}$. 
    This gives the desired isomorphism. 

\end{proof}

The Bilinear functor can also be viewed as a generalization of the Quot scheme. 
There are two ways to see this connection. 
The first point of view has been explained in \cite[Problem XXXVIII]{jj}: 
the functor $\Bilin_{d_1,d_2,d_2}^{r_1, r_2}(\A^n)$ 
gives a space that parametrizes modules together with an algebra acting on them. 
For the second point of view, let $[p_1,p_2,\pi]$ be a $\kk$-point of $\Bilin_{d_1,d_2,d_3}^{r_1, r_2}(\A^n)$ 
and notice that each of the three maps $p_1,p_2, \pi$ determines a $\kk$-point of the Quot scheme. 
For maps $p_i$ the points of the Quot schemes are defined as 
$[p_i \colon S^{\oplus r_i} \twoheadrightarrow M_i = \sfrac{S^{\oplus r_i}}{K_i}] \in \Quot_{d_i}^{r_i}(\A^n)(\kk)$. 
The map $\pi$ has a unique lift $p_3 \colon S^{\oplus r_1 r_2} \twoheadrightarrow M_3 = \sfrac{S^{\oplus r_1 r_2}}{K_3}$, 
which determines a $\kk$-point of $\Quot_{d_3}^{r_1 r_2}(\A^n)$. 
We will show that this extends to a natural transformation and 
use it to generalize the known results on the Quot scheme to 
the setting of the Bilinear functor. 

\subsection{Relation to tensors}\label{r:secbilintensors}
We now describe the special case $\Bilin_{d,d,d}^{r_1, r_2} (\A^n)$ and characterize tensors induced by the $\kk$-points. 
Take $[p_1, p_2, \pi] \in \Bilin_{d,d,d}^{r_1, r_2} (\A^n)(\kk)$ given by
\begin{equation*}
    p_i \colon S^{\oplus r_i} \twoheadrightarrow M_i \ (i=1,2), \quad \pi \colon M_1 \otimes M_2 \twoheadrightarrow M_3,
\end{equation*}
with $\dim_{\kk} M_i = d$. 
Let $\{ a_j \}, \{ b_j \}, \{ c_j \}$ be bases over $\kk$ of $M_1, \, M_2, \, M_3$, respectively. 
From the natural isomorphism of vector spaces $V \otimes W^{\vee} \simeq \Hom (V^{\vee}, \, W)$
we get the tensor corresponding to $\pi$, defined as
\begin{equation*}
    \mu_{\pi} \in M_1 ^{\vee} \otimes M_2 ^{\vee} \otimes M_3.
\end{equation*}
If $\pi$ is given by $\pi (a_i \otimes b_j) = \sum_{1 \leq k \leq d} s_{kij} c_k$, then 
\begin{equation*}
    \mu_{\pi} = \sum\limits_{i, j} a_i^{*} \otimes b_j^{*} \otimes \big( \sum\limits_{k} s_{kij} c_k \big),
\end{equation*}
where $\{ a_j^{*} \}, \{ b_j^{*} \}$ are dual bases of $M_1^{\vee}, \, M_2^{\vee}$, respectively.
Note that the maps $\pi \colon M_1 \otimes M_2 \twoheadrightarrow M_3$ coming from 
points of $\Bilin_{d,d,d}^{r_1, r_2} (\A^n)$ correspond to $M_3$-concise tensors in 
$M_1^{\vee} \otimes M_2^{\vee} \otimes M_3$. 
\begin{proposition}
    Let $[p_1, p_2, \pi] \in \Bilin_{d,d,d}^{r_1, r_2} (\A^n)$. 
    Then $\pi$ corresponds to a tensor of rank at least $d$.
\end{proposition}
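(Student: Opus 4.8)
The plan is to deduce this from the elementary fact that a tensor which is concise on one factor must have rank at least the dimension of that factor, applied to the $M_3$-conciseness of $\mu_\pi$ already observed just before the statement. So the first step is to make the relevant identification precise: under the canonical isomorphisms $(M_1^\vee)^\vee \simeq M_1$ and $(M_2^\vee)^\vee \simeq M_2$, the tensor $\mu_\pi = \sum_{i,j} a_i^* \otimes b_j^* \otimes \pi(a_i \otimes b_j) \in M_1^\vee \otimes M_2^\vee \otimes M_3$ corresponds to a $\kk$-linear contraction map $M_1 \otimes_\kk M_2 \to M_3$, and a direct check on the basis elements $a_i \otimes b_j$ shows that this contraction is exactly the underlying $\kk$-linear map of $\pi$. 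In particular it is surjective, since $\pi$ is a surjection of $S$-modules and hence of $\kk$-vector spaces; this is precisely the statement that $\mu_\pi$ is $M_3$-concise.

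Next, let $r$ denote the rank of $\mu_\pi$ and write $\mu_\pi = \sum_{\ell=1}^{r} \alpha_\ell \otimes \beta_\ell \otimes c_\ell$ with $\alpha_\ell \in M_1^\vee$, $\beta_\ell \in M_2^\vee$, $c_\ell \in M_3$. Then the contraction map above sends $m_1 \otimes m_2 \mapsto \sum_{\ell} \alpha_\ell(m_1)\,\beta_\ell(m_2)\, c_\ell$, so its image is contained in the $\kk$-linear span $\spann{c_1, \dots, c_r} \subseteq M_3$, a subspace of dimension at most $r$. Combining this with the surjectivity established in the previous step and with $\dim_{\kk} M_3 = d$ forces $r \geq d$, which is exactly the claim.

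I do not anticipate a genuine obstacle here, as the argument is pure linear algebra once the setup is in place. The only point that warrants a little care is the first step — verifying that the contraction of $\mu_\pi$ is genuinely $\pi$ itself, rather than a transpose or a map between differently paired spaces — but this is immediate from the explicit coordinate formula for $\mu_\pi$ recorded in the paragraph preceding the proposition, so no serious difficulty arises.
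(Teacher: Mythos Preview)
Your proof is correct and follows essentially the same route as the paper: both arguments use that $\mu_\pi$ is $M_3$-concise (i.e.\ the associated contraction $M_1\otimes M_2\to M_3$ is surjective), write a rank-$r$ decomposition of $\mu_\pi$, and observe that the image of the contraction then lies in the span of the $r$ third-factor vectors, forcing $r\ge d$. Your version is in fact slightly cleaner, since you write the decomposition with arbitrary $\alpha_\ell\in M_1^\vee$, $\beta_\ell\in M_2^\vee$, $c_\ell\in M_3$, whereas the paper phrases it as if the $a_i^*,b_j^*$ can be taken from dual bases of $M_1,M_2$ --- an unnecessary (and not automatically available) normalization that your argument avoids.
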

\begin{proof}
    Since $\pi$ is surjective, the corresponding tensor is $M_3$-concise 
    and also $\dim _{\kk} M_1 \otimes M_2 \geq d$. 
    A contrario, suppose that the corresponding tensor $\mu$ is of rank $r <d$. 
    Then there are bases $\{ a_i \}, \, \{ b_i \}$ of $M_1, \, M_2$ such that 
    \begin{equation*}
        \mu = a_1^{*} \otimes b_1^{*} \otimes \pi(a_1 \otimes b_1) + \ldots + a_r^{*} \otimes b_r^{*} \otimes \pi(a_r \otimes b_r),
    \end{equation*}
    which implies that the images $\pi (a_i \otimes b_i)$ span 
    a subspace of dimension $r<d$ in $M_3$, contra surjectivity of $\pi$.
\end{proof}

We will later see that the locus corresponding to concise tensors is open and 
gives rise to the main irreducible component of $\Bilin_{d,d,d}^{r_1, r_2} (\A^n)$.

\subsection{The Bilinear functor is representable}\label{r:secbilinisrepr}
The Bilinear functor $\Bilin_{d_1,d_2,d_3}^{r_1, r_2} (\A^n)$ 
is a closed subfunctor of a representable functor, 
hence is representable. 
Each of the surjections $p_i$ gives a $\Bbbk$-point of the Quot scheme 
$\Quot_{d_i}^{r_i}(\A^n)$ for $i=1,2$, 
and the third map $\pi$ lifts uniquely to a surjection $S^{\oplus r_1 r_2} \twoheadrightarrow M_3$, 
so it gives a $\Bbbk$-point of the Quot scheme $\Quot_{d_3}^{r_1 r_2}(\A^n)$. 
This defines the following inclusion on $\Bbbk$-points:
\begin{equation*}
    \Bilin_{d_1,d_2,d_3}^{r_1, r_2}(\A^n)(\Bbbk) \hookrightarrow (\Quot_{d_1}^{r_1}(\A^n) \times \Quot_{d_2}^{r_2}(\A^n) \times \Quot_{d_3}^{r_1 r_2}(\A^n))(\Bbbk).
\end{equation*}
The same reasoning defines the inclusion  
\begin{equation*}
    \Bilin_{d_1,d_2,d_3}^{r_1, r_2}(\A^n)(A) \hookrightarrow (\Quot_{d_1}^{r_1}(\A^n) \times \Quot_{d_2}^{r_2}(\A^n) \times \Quot_{d_3}^{r_1 r_2}(\A^n))(A)
\end{equation*}
for an arbitrary affine scheme $\Spec A$, 
which shows that $\Bilin_{d_1,d_2,d_3}^{r_1, r_2}(\A^n)$ is a subfunctor of the product  
$\Quot_{d_1}^{r_1}(\A^n) \times \Quot_{d_2}^{r_2}(\A^n) \times \Quot_{d_3}^{r_1 r_2}(\A^n)$. 
Throughout this section we will denote the product 
$\Quot_{d_1}^{r_1}(\A^n) \times \Quot_{d_2}^{r_2}(\A^n) \times \Quot_{d_3}^{r_1 r_2}(\A^n)$ by $\Q$. 
We will use Proposition \ref{r:prpclsubfcriterion} to prove 
that $\Bilin_{d_1,d_2,d_3}^{r_1, r_2}(\A^n)$ is a closed subfunctor of $\Q$. 

We start by describing the fiber product $\underline{T} \times_{\Q} \Bilin_{d_1,d_2,d_3}^{r_1, r_2}(\A^n)$ 
from Definition \ref{r:defopclsubf} to gain insight into the properties of 
the ideal satisfying Proposition \ref{r:prpclsubfcriterion}, provided that it exists. 
Let $T=\Spec A$, $T'=\Spec B$ be affine schemes. 
It suffices to consider the case $B = \sfrac{A}{J}$, where $J \lhd A$, 
since we always have factorization through the kernel 
$f^{\#} \colon A \twoheadrightarrow \sfrac{A}{\ker(f^{\#})} \rightarrow B$. 
Let $[p_i]$ denote an element of $\Q(A)$ defined as 
\begin{align*}
    [p_i] & = [p_1,p_2,p_3]  \\
    & = \big[ S_A^{\oplus r_1} \xrightarrowdbl[]{p_1} M_1, \, S_A^{\oplus r_2} \xrightarrowdbl[]{p_2} M_2, \, S_A^{\oplus r_1 r_2} \xrightarrowdbl[]{p_3} M_3 \big]. 
\end{align*}
\noindent
It corresponds to an element of $\Bilin_{d_1,d_2,d_3}^{r_1, r_2}(\A^n)(A)$ if and only if the third map $S_A^{\oplus r_1 r_2} \xrightarrowdbl[]{p_3} M_3$ factors as 
\[\begin{tikzcd}
	{S_A^{\oplus r_1 r_2}} \\
	\\
	{\frac{S_A^{\oplus r_1 r_2}}{K_1 \otimes S_A^{\oplus r_2}+S_A^{\oplus r_1} \otimes K_2} \simeq M_1 \otimes M_2} &&& {M_3=\frac{S_A^{\oplus r_1 r_2}}{K_3}}
	\arrow[two heads, from=1-1, to=3-1]
	\arrow["{p_3}", two heads, from=1-1, to=3-4]
	\arrow["\pi", two heads, from=3-1, to=3-4]
\end{tikzcd}\]
\noindent
where $M_i = \sfrac{S^{\oplus r_i}}{K_i}$ for $i=1,2$. 
If the map $\pi$ exists, 
then it is unique, so in that case we can write $[p_i] \in \Bilin_{d_1,d_2,d_3}^{r_1, r_2}(\A^n)(A)$. 
On the other hand, the map $\pi \colon M_1 \otimes M_2 \twoheadrightarrow M_3 = \sfrac{S^{\oplus r_1 r_2}}{K_3}$ 
exists if and only if both compositions  
\begin{equation*}
    K_1 \otimes S_A^{\oplus r_2} \hookrightarrow S_A^{\oplus r_1 r_2} \twoheadrightarrow M_3 
    \quad \text{and} \quad
    S_A^{\oplus r_1} \otimes K_2 \hookrightarrow S_A^{\oplus r_1 r_2} \twoheadrightarrow M_3 
\end{equation*}
are zero, which is quivalent to the inclusion of kernels $K_1 \otimes S_A^{\oplus r_2} + S_A^{\oplus r_1} \otimes K_2 \subseteq K_3$. 
This reasoning determines the conditions for  
a pair $(T' \xrightarrow{f} T, \ [p_1', p_2', \pi] \in \Bilin_{d_1,d_2,d_3}^{r_1, r_2}(\A^n)(T'))$
to define an element of the fiber product of functors
\begin{multline*}
    (\underline{T} \times_{\Q} \Bilin _{d_1,d_2,d_3}^{r_1, r_2}(\A^n))(T') = \\
    \bigl\{ (T' \xrightarrow{f} T, \ [p_1', p_2', \pi]) \in \underline{T}(T') \times \Bilin_{d_1,d_2,d_3}^{r_1, r_2}(\A^n)(T') \mid f^{*}[p_i] = \iota([p_1', p_2', \pi]) \bigr\}, 
\end{multline*}
\noindent
where $\iota \colon \Bilin_{d_1,d_2,d_3}^{r_1, r_2}(\A^n) \hookrightarrow \Q$ is the inclusion of functors. 
We write $M_i' = M_i \otimes_A B$ for $i=1,2,3$. 
Then for elements of the form
\begin{align*}
    \begin{split}
        f^{*}[p_i] & = \big[S_B^{\oplus r_1} \xrightarrowdbl[]{p_1'} M_1', \, S_B^{\oplus r_2} \xrightarrowdbl[]{p_2'} M_2', \, S_B^{\oplus r_1 r_2} \xrightarrowdbl[]{p_3'} M_3' \big], \\
        [p_1', p_2', \pi] & = \big[ S_B^{\oplus r_1} \xrightarrowdbl[]{p_1'} M_1', \, S_B^{\oplus r_2} \xrightarrowdbl[]{p_2'} M_2', \, M_1' \otimes_{S_B} M_2' \xrightarrowdbl[]{\pi} M_3' \big] 
    \end{split}
\end{align*}
we have $f^{*}[p_i] = \iota([p_1', p_2', \pi])$ 
if and only if $f^{*}[p_i]$ defines an element of 
${\Bilin_{d_1,d_2,d_3}^{r_1, r_2}(\A^n)(B)}$, 
which is equivalent to the factorization of the map 
$p_3' \colon S^{\oplus r_1 r_2} \twoheadrightarrow M_3'$ 
through $M_1' \otimes_{S_B} M_2' \xrightarrowdbl[]{\pi} M_3'$. 
Let $M_i' = \sfrac{S_B^{\oplus r_i}}{K_i'}$ for $i=1,2$ 
and $M_3' = \sfrac{S_B^{\oplus r_1 r_2}}{K_3'}$. 
Then, as before, the map $\pi$ exists if and only if we have the inclusion of kernels 
\begin{equation}\label{r:eqinclusion}
    K_1' \otimes S_B^{\oplus r_2} + S_B^{\oplus r_1} \otimes K_2' \subseteq K_3'
\end{equation}
\noindent
or equivalently, if the compositions 
\begin{equation}\label{r:eqcompzero}
    K_1' \otimes S_B^{\oplus r_2} \rightarrow S_B^{\oplus r_1 r_2} \twoheadrightarrow M_3' \quad \text{and} \quad S_B^{\oplus r_1} \otimes K_2' \rightarrow S_B^{\oplus r_1 r_2} \twoheadrightarrow M_3'
\end{equation}
are zero. 
In other words, given an element $[p_i] \in \Q(A)$, 
we are looking for quotients $A \twoheadrightarrow \sfrac{A}{J}$ that 
yield the zero maps after taking the tensor products of 
$K_1 \otimes S_A^{\oplus r_2}, \ S_A^{\oplus r_1} \otimes K_2 \twoheadrightarrow M_3$ 
with $\sfrac{A}{J}$. 

On the other hand, claiming that $\Bilin_{d_1,d_2,d_3}^{r_1, r_2}(\A^n)$ is a 
closed subfunctor of $\Q$ is equivalent to 
claiming that for all affine schemes $T$, $T'$, 
the fiber product $\underline{T} \times_{\Q} \Bilin_{d_1,d_2,d_3}^{r_1, r_2}(\A^n)$ is representable 
by a closed subscheme $Z = \Spec(\sfrac{A}{I}) \subseteq T$, 
where $I \lhd A$ is the ideal from Proposition \ref{r:prpclsubfcriterion}. 
Combining those two points of view leads to the conclusion that 
we are looking for the ideal $I \lhd A$ such that the 
map $f \colon \Spec(\sfrac{A}{J}) \rightarrow \Spec A$ 
satisfies the conditions (\ref{r:eqinclusion}) or (\ref{r:eqcompzero}) 
if and only if it factors through $\Spec(\sfrac{A}{I}) \subseteq \Spec A$. 
\begin{condition}[for the ideal $I \lhd A$]\label{r:idealcond}
    Let $f \colon \Spec B \rightarrow \Spec A$, where $B = \sfrac{A}{J}$. 
    Let 
    \begin{equation*}
        [p_i] = \big[ S_A^{\oplus r_1} \xrightarrowdbl[]{p_1} M_1, \, S_A^{\oplus r_2} \xrightarrowdbl[]{p_2} M_2, \, S_A^{\oplus r_1 r_2} \xrightarrowdbl[]{p_3} M_3 \big] \in \Q(A)
    \end{equation*}
    be such that 
    \begin{equation*}
        f^{*}[p_i] = \big[ S_B^{\oplus r_1} \xrightarrowdbl[]{p_1'} M_1', \ S_B^{\oplus r_2} \xrightarrowdbl[]{p_2'} M_2', \ S_B^{\oplus r_1 r_2} \xrightarrowdbl[]{p_3'} M_3' \big] \in \Q(B).
    \end{equation*}
    Let $M_i = \sfrac{S_A^{\oplus r_i}}{K_i}$ and $M_i' = \sfrac{S_B^{\oplus r_i}}{K_i'}$ for $i=1,2$. 
    Define maps 
    \begin{align*}
        & \psi_1 \colon K_1' \otimes S_B^{\oplus r_2} = K_1 \otimes S_A^{\oplus r_2} \otimes \sfrac{A}{J} \rightarrow S_B^{\oplus r_1 r_2} \rightarrow M_3', \\
        & \psi_2 \colon S_B^{\oplus r_1} \otimes K_2' = S_A^{\oplus r_1} \otimes K_2 \otimes \sfrac{A}{J} \rightarrow S_B^{\oplus r_1 r_2} \rightarrow M_3'
    \end{align*}
    \noindent
    as compositions of the canonical inclusions and quotient maps. 
    In this setting, we get zero maps $\psi_i \equiv 0$ if and only if $I \subseteq J$. 
\end{condition}

\begin{theorem}\label{r:thmexists}
    The ideal satisfying Condition \ref{r:idealcond} exists.
\end{theorem}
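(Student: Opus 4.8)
The plan is to combine the two compatibility conditions into the vanishing of a single $A$-linear map with locally free target, and to produce the ideal $I$ from the evaluation pairing of a locally free module with its dual.

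First I would assemble the maps. Over $A$, set $P := (K_1 \tensor_{S_A} S_A^{\oplus r_2}) \oplus (S_A^{\oplus r_1} \tensor_{S_A} K_2)$ and let $\Psi \colon P \to M_3$ be the $A$-linear (in fact $S_A$-linear) map given on the two summands by the canonical inclusions $K_1 \tensor_{S_A} S_A^{\oplus r_2}, \ S_A^{\oplus r_1} \tensor_{S_A} K_2 \into S_A^{\oplus r_1 r_2}$ followed by $p_3$. Because $M_1$ and $M_2$ are locally free, hence flat, over $A$, applying $-\tensor_A B$ to the exact sequences $0 \to K_i \to S_A^{\oplus r_i} \to M_i \to 0$ yields $K_i \tensor_A B = \ker(S_B^{\oplus r_i} \onto M_i \tensor_A B) = K_i'$; consequently $P \tensor_A B$ is the common domain of the two maps $\psi_1, \psi_2$ of Condition \ref{r:idealcond}, $M_3 \tensor_A B = M_3'$, and $\Psi \tensor_A B = \psi_1 \oplus \psi_2$. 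Thus Condition \ref{r:idealcond} asks precisely for an ideal $I \lhd A$, depending only on $A$ and $[p_i] \in \Q(A)$, such that $\Psi \tensor_A (\sfrac{A}{J}) = 0$ if and only if $I \subseteq J$.

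Now the target $M_3$ is locally free of rank $d_3$ over $A$, i.e. finitely generated projective, and so is $M_3^{\vee} := \Hom_A(M_3, A)$. I define
\[
    I := \im\!\Bigl( P \tensor_A M_3^{\vee} \xrightarrow{\ \Psi \tensor \id\ } M_3 \tensor_A M_3^{\vee} \xrightarrow{\ \mathrm{ev}\ } A \Bigr),
\]
the ideal generated by all $\lambda(\Psi(\kappa))$ with $\kappa \in P$ and $\lambda \in M_3^{\vee}$ (equivalently, by all $\lambda(p_3(\kappa))$ with $\kappa$ in the submodule $K_1 \tensor_{S_A} S_A^{\oplus r_2} + S_A^{\oplus r_1} \tensor_{S_A} K_2 \subseteq S_A^{\oplus r_1 r_2}$). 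To check that $I$ works, I would use the elementary fact that for a finitely generated projective $A$-module $M$ and an ideal $J \lhd A$ one has $JM = \{\, m \in M \mid \lambda(m) \in J \text{ for all } \lambda \in M^{\vee}\,\}$: the inclusion $\subseteq$ is immediate, and for $\supseteq$ one reduces modulo $J$, using that $M^{\vee} \tensor_A (\sfrac{A}{J}) \xrightarrow{\sim} (M \tensor_A \sfrac{A}{J})^{\vee}$ for $M$ finitely generated projective and that an element of a finitely generated projective $(\sfrac{A}{J})$-module annihilated by every functional is zero (realize the module as a direct summand of a free one and read off coordinates). Applying this with $M = M_3$: the map $\Psi \tensor_A (\sfrac{A}{J})$ vanishes iff $\Psi(\kappa) \in J M_3$ for all $\kappa \in P$, iff $\lambda(\Psi(\kappa)) \in J$ for all $\kappa \in P$ and $\lambda \in M_3^{\vee}$, iff $I \subseteq J$. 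Since $\Psi \tensor_A (\sfrac{A}{J}) = 0$ is exactly the assertion $\psi_1 \equiv 0$ and $\psi_2 \equiv 0$ of Condition \ref{r:idealcond}, the ideal $I$ satisfies the condition, and the theorem follows.

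The one genuine subtlety — and the reason one cannot simply quote the functor of zeros of Proposition \ref{r:prpfunctofzeros} or a Fitting ideal of a matrix — is that the source $P$ need not be locally free, nor even finitely generated when $A$ is not Noetherian, so it has no well-defined "matrix of entries". What rescues the argument is that only the target $M_3$ has to be locally free: the evaluation pairing $M_3 \tensor_A M_3^{\vee} \to A$ converts the vanishing of $\Psi$ into an honest ideal, and the displayed fact about $JM$ is exactly what makes this ideal transform correctly along the base changes $A \to \sfrac{A}{J}$. (If one prefers, one can instead first reduce to the case of a finitely generated $\kk$-algebra $A$ by a standard spreading-out argument and then argue with matrices after trivializing $M_3$, but the pairing argument above requires no such reduction.)
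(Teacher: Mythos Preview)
Your proof is correct and follows the same underlying idea as the paper: construct the ideal from the maps $\psi_1, \psi_2$ (your $\Psi$) using that the target $M_3$ is locally free of finite rank over $A$, then verify the universal property. The paper phrases this as taking the sum of Fitting ideals $\I(\psi_1) + \I(\psi_2)$ and invokes the functor-of-zeros machinery of Proposition~\ref{r:prpfunctofzeros}; your ideal $I = \im(P \otimes_A M_3^{\vee} \to A)$ is the same object once one unwinds the definitions. Where the two presentations diverge is in handling the source: the paper's Definition~\ref{r:deffittingideal} and Proposition~\ref{r:prpfunctofzeros} are stated for morphisms between locally free sheaves of finite rank, and the paper tacitly treats the $Q_j$ as such (``take an affine open where $Q_j$ and $M_3$ are free''), whereas you correctly observe that $P$ need not be finitely generated and instead rely only on the finite projectivity of the \emph{target} via the evaluation pairing and the characterization $J M_3 = \{\, m : \lambda(m) \in J \text{ for all } \lambda \in M_3^{\vee} \,\}$. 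Your route is therefore more self-contained and sidesteps the mild looseness in the paper's appeal to its own finite-rank Fitting-ideal framework; conversely, the paper's argument is easily repaired in exactly the way you indicate, so the two proofs are really the same once the bookkeeping is done carefully.
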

\noindent
Before proving Theorem \ref{r:thmexists}, 
we describe the motivation for what the ideal $I$ should be. 
Let 
\begin{equation*}
    Q_1 = K_1 \otimes S_A^{\oplus r_2} \quad \text{and} \quad Q_2 = S_A^{\oplus r_1} \otimes K_2.
\end{equation*}
Consider the compositions $\psi_j \colon Q_j \rightarrow S_A^{\oplus r_1 r_2} \twoheadrightarrow M_3$ for $j=1,2$. 
Those are morphisms of locally free $A$-modules, 
so they determine morphisms of quasicoherent sheaves on $\Spec A$. 
Consider restrictions to an open subscheme 
$\Spec R = \Spec A_f \subseteq \Spec A$ such that the modules $Q_i$ and $M_3$ are free. 
Then we can write 
\begin{equation*}
    \varphi_i = (\psi_i)_{\vert _{\Spec R}} \colon M = (Q_i)_{\vert _{\Spec R}} \rightarrow N = (M_3)_{\vert _{\Spec R}}
\end{equation*}
for the restriction. 
Those are morphisms of free $R$-modules, so they are given by some matrices $P_i$ with entries in $R$. 
Consider a morphism $f \colon \Spec(\sfrac{R}{J}) \rightarrow \Spec R$ 
corresponding to the quotient by an ideal $J \lhd R$. 
This induces the pullback map $f^{*} \colon \Q(R) \rightarrow \Q(\sfrac{R}{J})$ 
and sends the maps $\varphi_i$ to 
\begin{equation*}
    \varphi_i ' = \varphi_i \otimes 1 \colon M_i' = M_i \otimes _B \sfrac{R}{J} \rightarrow N' = N \otimes_B \sfrac{R}{J}.
\end{equation*}
Since the modules $M, N$ are free, we have the isomorphisms 
\begin{align*}
    M_i'& = M_i \otimes \sfrac{R}{J} \simeq R^{\oplus m_i} \otimes \sfrac{R}{J} \simeq (\sfrac{R}{J})^{\oplus m_i} \text{ for } i=1,2,  \\
    N' & = N \otimes \sfrac{R}{J} \simeq R^{\oplus n} \otimes \sfrac{R}{J} \simeq (\sfrac{R}{J})^{\oplus n},
\end{align*}
which shows that $M_i', \ N'$ are also free. 
The maps $\varphi_i'$ are given by matrices $P_i'$ whose entries 
are the images of the entries of $P_i$ in $\sfrac{R}{J}$. 
It follows that the maps $\varphi_i'$ are zero if and only if 
the entries of $P_i$ are in the ideal $J \lhd R$. 
This suggests that the ideal $I$ from Condition \ref{r:idealcond} 
must contain the entries of matrices $P_i$ of this form, 
or in other words, it must contain the Fitting ideal from Definition \ref{r:deffittingideal}. 
\begin{proof}[Proof of Theorem \ref{r:thmexists}]
    Let $Q_1 = K_1 \otimes S_A^{\oplus r_2}$ and $Q_2 = S_A^{\oplus r_1} \otimes K_2$ be as above. 
    Consider the compositions 
    \begin{equation*}
        \psi_j \colon Q_j \rightarrow S_A^{\oplus r_1 r_2} \twoheadrightarrow M_3 \quad \text{for } j=1,2.
    \end{equation*} 
    We claim that the construction of Fitting ideal $\I(\psi_j)$ for each $j=1,2$ 
    gives the ideal $\I = \I(\psi_1) + \I(\psi_2)$ that satisfies Condition \ref{r:idealcond}. 
    Take an affine open subscheme $\Spec A_f$ such that 
    the modules $(Q_j) _{\vert_{\Spec A_f}}$ and $(M_3)_{\vert_{\Spec A_f}}$ are free. 
    Then from the isomorphisms in (\ref{r:eqfitting}), 
    the pullbacks of maps $\psi_j$ are zero if and only if $\I \subseteq J$. 
    By Proposition \ref{r:prpfunctofzeros}, this gives a well-defined ideal in $A$. 
\end{proof}
\noindent
The schemes representing functors $\Zz(\psi_1) \cap \Zz(\psi_2)$ defined on affine schemes glue 
to give the closed subscheme $Bilin \subseteq \Q$ 
that represents the Bilinear functor $\Bilin_{d_1,d_2,d_3}^{r_1, r_2}(\A^n)$. 

\subsection{The tangent space to the Bilinear scheme}\label{r:secbilintangent}

Let $ \Bilin_{d_1,d_2,d_3}^{r_1, r_2}(\A^n) \xhookrightarrow{\iota} \Q$ 
denote the closed embedding of functors from Section \ref{r:secbilinisrepr}. 
We will use this result together with Theorem \ref{r:thmquottangent} to 
calculate the tangent space to the Bilinear scheme. 
We start by fixing the notation. 
Let $M_i = \sfrac{F_i}{K_i}$ and $\tM_i = \sfrac{\tF_i}{\tK_i}$ for $i=1,2,3$. 
Write $F_j = S^{\oplus r_j}$, $\tF_j = S[\varepsilon]^{\oplus r_j}$ for $j=1,2$ and 
$\tF_3 = S[\varepsilon]^{\oplus r_1 r_2}$, $F_3 = S^{\oplus r_1 r_2}$.  
Consider 
\begin{align*}
    \begin{split}
        \big[ F_1 \xrightarrowdbl[]{p_1} M_1, \, F_2 \xrightarrowdbl[]{p_2} M_2, \, F_3 \xrightarrowdbl[]{p_3} M_3 \big] & = [p_1, p_2, p_3] \in \Q(\kk), \\
        \big[ \tF_1 \xrightarrowdbl[]{\widetilde{p_1}} \tM_1, \, \tF_2 \xrightarrowdbl[]{\widetilde{p_2}} \tM_2, \, \tF_3 \xrightarrowdbl[]{\widetilde{p_3}} \tM_3 \big] 
        & = [\widetilde{p_1}, \widetilde{p_2}, \widetilde{p_3}] \in \Q(\keps).
    \end{split}
\end{align*}
The point $[\widetilde{p_1}, \widetilde{p_2}, \widetilde{p_3}]$ 
determines a tangent vector to $\Q$ at $[p_1,p_2,p_3]$ if $M_i \simeq \sfrac{\tM_i}{\varepsilon \tM_i}$. 
Consider also 
\begin{align*}
    \begin{split}
        \big[F_1 \xrightarrowdbl[]{p_1} M_1, \, F_2 \xrightarrowdbl[]{p_2} M_2, \, M_1 \otimes_{S} M_2 \xrightarrowdbl[]{\pi} M_3 \big] 
        & = [p_1, p_2, \pi] \in \Bilin_{d_1,d_2,d_3}^{r_1, r_2}(\A^n)(\kk), \\
        \big[\tF_1 \xrightarrowdbl[]{\widetilde{p_1}} \tM_1, \ \tF_2 \xrightarrowdbl[]{\widetilde{p_2}} \tM_2, \ \tM_1 \otimes_{\Seps} \tM_2 \xrightarrowdbl[]{\widetilde{\pi}} \tM_3 \big]
        & = [\widetilde{p_1}, \widetilde{p_2}, \widetilde{\pi}] \in \Bilin_{d_1,d_2,d_3}^{r_1, r_2}(\A^n)(\keps). 
    \end{split}
\end{align*}
As in the setting of $\Q$, the point $[\widetilde{p_1}, \widetilde{p_2}, \widetilde{\pi}]$ 
determines a tangent vector at $[p_1, p_2, \pi]$ 
if the pullback of $[\widetilde{p_1}, \widetilde{p_2}, \widetilde{\pi}]$ along 
$\alpha \colon \Spec \keps \rightarrow \Spec \kk$ gives $[p_1, p_2, \pi]$, 
which is equivalent to $M_i = \sfrac{\tM_i}{\varepsilon \tM_i}$ for each $i=1,2,3$. 

By the same reasoning as in Section \ref{r:secbilinisrepr}, $[p_1, p_2, p_3] \in \Q(\Bbbk)$ 
determines a $\Bbbk$-point $[p_1,p_2,\pi] \in \Bilin_{d_1,d_2,d_3}^{r_1, r_2}(\A^n)(\Bbbk)$ 
if and only if we have the following factorization 
\[\begin{tikzcd}
	{F_3} \\
	{M_1 \otimes M_2} && {M_3}
	\arrow[two heads, from=1-1, to=2-1]
	\arrow["{p_3}", shorten >=6pt, two heads, from=1-1, to=2-3]
	\arrow["\pi", shorten <=5pt, shorten >=5pt, two heads, from=2-1, to=2-3]
\end{tikzcd}\]
\noindent
Denote the kernel of $F_3 \twoheadrightarrow M_1 \otimes M_2$ by 
$Q = K_1 \otimes F_2 + F_1 \otimes K_2 = Q_1 + Q_2$. 
Then we have the isomorphism $M_1 \otimes M_2 \simeq \sfrac{F_3}{Q}$ and 
the map $\pi \colon M_1 \otimes M_2 \twoheadrightarrow M_3 = \sfrac{F_3}{K_3}$ 
exists if and only if the compositions 
$Q_j \hookrightarrow F_3 \twoheadrightarrow M_3$ vanish for $j=1,2$. 
Equivalently the map $\pi$ exists if and only if there is the inclusion $Q \subseteq K_3$. 

Consider the tangent space to $\mathcal{Q}$ at $[p_1,p_2,p_3]$, 
which by Theorem \ref{r:thmquottangent} is isomorphic to 
\begin{equation*}
    \Hom _S (K_1, M_1) \times \Hom _S (K_2, M_2) \times \Hom _S (K_3, M_3).    
\end{equation*}
Take an element 
\begin{equation*}
    (\varphi_1, \varphi_2, \varphi_3) \in \Hom _S (K_1, M_1) \times \Hom _S (K_2, M_2) \times \Hom _S (K_3, M_3). 
\end{equation*}
The above reasoning gives the following diagrams: 
\[\begin{tikzcd}
	{K_1 \otimes F_2} &&& {M_1 \otimes F_2} && {F_1 \otimes K_2} &&& {F_1 \otimes M_2} \\
	{K_3} &&& {M_3} && {K_3} &&& {M_3}
	\arrow["{\varphi_1 \otimes \mathds{1}_{F_2}}", from=1-1, to=1-4]
	\arrow["{j_1}", hook, from=1-1, to=2-1]
	\arrow["\pi \circ ({\mathds{1}_{M_1} \otimes p_2})", from=1-4, to=2-4]
	\arrow["{\mathds{1}_{F_1} \otimes \varphi_1}", from=1-6, to=1-9]
	\arrow["{j_2}", hook, from=1-6, to=2-6]
	\arrow["\pi \circ ({p_1 \otimes \mathds{1}_{M_1}})", from=1-9, to=2-9]
	\arrow["{\varphi_3}", from=2-1, to=2-4]
	\arrow["{\varphi_3}", from=2-6, to=2-9]
\end{tikzcd}\]
\noindent

\begin{proposition}\label{r:thmbilintangent}
    The tangent space to $\Bilin_{d_1,d_2,d_3}^{r_1, r_2}(\A^n) $ at $[p_1,p_2,\pi]$ consists of those elements 
    \begin{equation*}
        (\varphi_1, \varphi_2, \varphi_3) \in \Hom _S (K_1, M_1) \times \Hom _S (K_2, M_2) \times \Hom _S (K_3, M_3)
    \end{equation*}
    that make the diagrams above commute. 
    In other words, $(\varphi_1, \varphi_2, \varphi_3)$ determines the tangent vector at $[p_1,p_2,\pi]$ if and only if for 
    \begin{equation*}
        k_1 \otimes f_2 \in K_1 \otimes F_2 \quad \text{and} \quad f_1 \otimes k_2 \in F_1 \otimes K_2,
    \end{equation*}
     the images $\varphi_1(k_1) \otimes f_2, \, f_1 \otimes \varphi_2(k_2)$ in $\sfrac{F_3}{K_3}$ satisfy 
    \begin{equation*}
        \varphi_3(k_1 \otimes f_2) = \varphi_1(k_1) \otimes f_2 + K_3 \quad \text{and} \quad
        \varphi_3(f_1 \otimes k_2) = f_1 \otimes \varphi_2(k_2) + K_3.
    \end{equation*}
\end{proposition}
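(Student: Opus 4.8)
The plan is to exploit the closed embedding $\iota\colon\Bilin_{d_1,d_2,d_3}^{r_1, r_2}(\A^n)\hookrightarrow\Q$ constructed in Section \ref{r:secbilinisrepr}. Since $\iota$ is a closed immersion, the tangent space to $\Bilin_{d_1,d_2,d_3}^{r_1, r_2}(\A^n)$ at $[p_1,p_2,\pi]$ is exactly the subset of the tangent space $T_{[p_1,p_2,p_3]}\Q$ consisting of those tangent vectors $[\widetilde{p_1},\widetilde{p_2},\widetilde{p_3}]\in\Q(\keps)$ whose associated morphism $\Spec\keps\to\Q$ factors through $\Bilin_{d_1,d_2,d_3}^{r_1, r_2}(\A^n)$, i.e.\ which lift (necessarily uniquely) to a point of $\Bilin_{d_1,d_2,d_3}^{r_1, r_2}(\A^n)(\keps)$. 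By the same argument as in Section \ref{r:secbilinisrepr}, now carried out over the base $A=\keps$, such a lift exists precisely when $\tQ:=\tK_1\otimes_{\Seps}\tF_2+\tF_1\otimes_{\Seps}\tK_2\subseteq\tK_3$, equivalently when the two compositions $\tK_1\otimes_{\Seps}\tF_2\to\tF_3\twoheadrightarrow\tM_3$ and $\tF_1\otimes_{\Seps}\tK_2\to\tF_3\twoheadrightarrow\tM_3$ vanish; here one uses that $\tF_3=\tF_1\otimes_{\Seps}\tF_2$ and that tensoring the inclusions $\tK_i\hookrightarrow\tF_i$ with the free modules $\tF_{3-i}$ preserves injectivity. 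The lift automatically lands in $\Bilin_{d_1,d_2,d_3}^{r_1, r_2}(\A^n)(\keps)$ since $\tM_3$, being the third module of the given point of $\Q(\keps)$, is already free of rank $d_3$ over $\keps$. So the statement reduces to translating $\tQ\subseteq\tK_3$ into conditions on $(\varphi_1,\varphi_2,\varphi_3)$.

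Next I would invoke Theorem \ref{r:thmquottangent} to identify $T_{[p_1,p_2,p_3]}\Q\simeq\Hom_S(K_1,M_1)\times\Hom_S(K_2,M_2)\times\Hom_S(K_3,M_3)$, recording the explicit description \eqref{r:eqtK}: the tangent vector attached to $(\varphi_1,\varphi_2,\varphi_3)$ has $\tK_i=\{k+\varepsilon g\mid k\in K_i,\ g\in F_i,\ g+K_i=\varphi_i(k)\}\subseteq\tF_i=F_i\oplus\varepsilon F_i$, and in particular $\varepsilon K_i\subseteq\tK_i$. Under the identification $\tF_1\otimes_{\Seps}\tF_2=\tF_3=F_3\oplus\varepsilon F_3$, the module $\tK_1\otimes_{\Seps}\tF_2$ is generated over $\Seps$ by the elements $(k_1+\varepsilon g_1)\otimes(f_2+\varepsilon f_2')$ with $k_1\in K_1$, $g_1\in F_1$ satisfying $g_1+K_1=\varphi_1(k_1)$, and $f_2,f_2'\in F_2$; similarly for $\tF_1\otimes_{\Seps}\tK_2$.

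Then I would unwind $\tK_1\otimes_{\Seps}\tF_2\subseteq\tK_3$, the inclusion $\tF_1\otimes_{\Seps}\tK_2\subseteq\tK_3$ being symmetric. Expanding a generator and using $\varepsilon^2=0$ gives $(k_1+\varepsilon g_1)\otimes(f_2+\varepsilon f_2')=k_1\otimes f_2+\varepsilon\bigl(g_1\otimes f_2+k_1\otimes f_2'\bigr)$ in $\tF_3$. Since $[p_1,p_2,\pi]\in\Bilin_{d_1,d_2,d_3}^{r_1, r_2}(\A^n)(\kk)$ we already have $Q_1=K_1\otimes F_2\subseteq K_3$, so $k_1\otimes f_2\in K_3$ and $\varepsilon(k_1\otimes f_2')\in\varepsilon K_3\subseteq\tK_3$; by additivity it remains only to ask when $k_1\otimes f_2+\varepsilon(g_1\otimes f_2)\in\tK_3$, which by the description of $\tK_3$ holds iff $g_1\otimes f_2+K_3=\varphi_3(k_1\otimes f_2)$ in $M_3$. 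As $g_1$ is any lift of $\varphi_1(k_1)\in M_1$ and $Q_1\subseteq K_3$, the class $g_1\otimes f_2+K_3$ is independent of the chosen lift and equals the image of $\varphi_1(k_1)\otimes f_2\in M_1\otimes F_2\simeq F_3/Q_1$ under $F_3/Q_1\twoheadrightarrow F_3/K_3=M_3$, i.e.\ the composite appearing in the left-hand diagram. Hence $\tK_1\otimes_{\Seps}\tF_2\subseteq\tK_3$ is equivalent to $\varphi_3(k_1\otimes f_2)=\varphi_1(k_1)\otimes f_2+K_3$ for all $k_1\otimes f_2$, i.e.\ to commutativity of the left diagram; the symmetric computation yields $\varphi_3(f_1\otimes k_2)=f_1\otimes\varphi_2(k_2)+K_3$ and the right diagram. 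Conversely, since both sides of each identity are $S$-linear in their arguments, verifying them on the pure tensors generating $K_1\otimes F_2$ and $F_1\otimes K_2$ already recovers $\tQ\subseteq\tK_3$. Combined with the first paragraph, this gives the claim.

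The step I expect to be the main obstacle is the $\varepsilon$-bookkeeping in $\tF_3=F_3\oplus\varepsilon F_3$: sorting out which terms of an expanded generator lie in $\tK_3$ automatically (those controlled by $Q\subseteq K_3$) and which impose a genuine constraint, and checking that it suffices to test the inclusion on pure-tensor generators, so that the auxiliary coefficient $f_2'$ (and its counterpart on the other side) contributes nothing new. A subsidiary point is the well-posedness of the expression ``$\varphi_1(k_1)\otimes f_2+K_3$'', namely its independence of the lift $g_1$; this is precisely where the hypothesis $Q_1\subseteq K_3$ re-enters and is what makes the two diagrams in the statement meaningful.
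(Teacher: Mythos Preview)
Your proof is correct and follows essentially the same approach as the paper: both reduce the question to the inclusion $\tQ\subseteq\tK_3$ via the closed embedding into $\Q$, then expand generators $(k+\varepsilon g)\otimes(f+\varepsilon f')$ in $\tF_3$ and use $Q\subseteq K_3$ to isolate the single nontrivial constraint $g_1\otimes f_2+K_3=\varphi_3(k_1\otimes f_2)$. Your write-up is in fact slightly more explicit about the converse direction and about why the expression $\varphi_1(k_1)\otimes f_2+K_3$ is well-defined, points the paper passes over more briefly.
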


\begin{proof}
    By definition,
    $[\widetilde{p_1}, \widetilde{p_2}, \widetilde{\pi}] \in \Bilin_{d_1,d_2,d_3}^{r_1, r_2}(\A^n)(\keps)$ 
    determines a tangent vector at $[p_1,p_2,\pi]$ 
    if and only if it restricts to $[p_1,p_2,\pi]$ when we take the pullback by 
    $\alpha \colon \Spec \kk \rightarrow \Spec \keps$. 
    For $i=1,2$, the maps $\widetilde{p_i}$ restrict to the desired maps $p_i$ 
    if and only if they correspond to tangent vectors to the Quot schemes $\Quot_{d_i}^{r_i}(\A^n)$. 
    In this case, we have the corresponding $S$-linear maps $\varphi_i$. 
    The map $\widetilde{\pi}$ lifts to $\widetilde{p_3}$, 
    which determines a tangent vector to $\Quot_{d_3}^{r_1 r_2}(\A^n)$ at $[p_3]$ if and only if 
    it restricts to $p_3$. If this is the case, it gives an $S$-linear map $\varphi_3$. 
    Let $[\widetilde{p_1}, \widetilde{p_2}, \widetilde{p_3}] \in \Q(\keps)$ restrict to $[p_1,p_2,p_3] \in \Q(\kk)$ 
    such that $p_3$ factors through $\pi$. 
    We will show that $\widetilde{p_3}$ factors through $\widetilde{\pi}$ if and only if the maps $\varphi_i$ 
    satisfy the desired commutative diagrams. 
    This will show that
    $[\widetilde{p_1}, \widetilde{p_2}, \widetilde{\pi}]$ 
    of this form determines a tangent vector at $[p_1,p_2,\pi]$. 

    If we construct the maps $\varphi_i$ for the point $[\widetilde{p_1}, \widetilde{p_2}, \widetilde{p_3}] \in \Q(\keps)$, 
    where $\widetilde{p_3}$ is defined as the lift of $\widetilde{\pi}$, 
    then they will automatically satisfy the desired diagrams. It suffices to prove the converse.  
    Assume that the maps $\varphi_i$ satisfy the diagrams. 
    Recall the module $\tK$ we have defined in (\ref{r:eqtK}). 
    Likewise, define 
    \begin{equation*}
        \tK_i = \{ k+ \varepsilon g \ \vert \ k \in K_i, \ g \in F_i \text{ such that } g+K_i = \varphi_i(k) \}
    \end{equation*}
    for $i=1,2,3$. Let $\widetilde{Q}_1 = \tK_1 \otimes \tF_2$, 
    we will show that $\tQ_1 \subseteq \tK_3$. 
    Elements of $\tQ_1$ are of the following form 
    \begin{equation*}
        (k + \varepsilon g) \otimes (f_1 + \varepsilon f_2) = k \otimes f_1 + \varepsilon g \otimes f_1 + k \otimes \varepsilon f_2.
    \end{equation*}
    \noindent
    We note the following observations.  
    \begin{enumerate}
        \item $k \otimes f_1 \in K_1 \otimes F_2, \text{ hence } k \otimes f_1 + \varepsilon g_3 \in \tK_3 \text{ with } g_3 + K_3 = \varphi_3(k \otimes f_1) = \varphi_1(k) \otimes f_1 +K_3$.
        \item $\varepsilon g \otimes f_1 \text{ is such that } g + K_1 = \varphi_1(k) \text{, so } g \otimes f_1 + K_1 \otimes F_2 = \varphi_1(k) \otimes f_1$.
        \item $k \otimes \varepsilon f_2 \in \varepsilon K_1 \otimes F_1 \text{ and } \varepsilon(k \otimes f_2 + \varepsilon h) = \varepsilon k \otimes f_2 \in \tK_3$ 
        for $h$ such that $\varphi_3 (k \otimes f_2) = h + K_3$.
    \end{enumerate}
    \noindent
    It suffices to show that $k \otimes f_1 + \varepsilon g \otimes f_1 \in \tK_3$. 
    In $\sfrac{F_3}{K_3}$, the elements $g \otimes f_1$ and $g_3$ both map to $\varphi_3(k \otimes f_1)$, 
    hence $\varepsilon g \otimes f_1$ corresponds to $\varepsilon g_3$ and therefore 
    $k \otimes f_1 + \varepsilon g \otimes f_1 \in \tK_3$. 
    The same reasoning shows that $\tQ_2 \subseteq \tK_3$. 
        
\end{proof}

\subsection{The main component of the Bilinear scheme}\label{r:secmaincomp}

Motivated by the smoothable component of the Hilbert scheme 
and the principal component of the Quot scheme, 
we define the component of the Bilinear scheme $\Bilin_{d_1,d_2,d_3}^{r_1, r_2}(\A^n)$
that can be viewed as parametrizing tuples of points. 
We will discuss the special case $\Bilin_{d,d,d}^{r_1, r_2} (\A^n)$. 
Recall that we have the closed embedding of functors 
$\iota \colon \Bilin_{d,d,d}^{r_1, r_2}(\A^n) \hookrightarrow \Q$. 
Consider the composition 
\[\begin{tikzcd}
	\Bilin_{d,d,d}^{r_1, r_2}(\A^n) & {\Quot_{d}^{r_1}(\A^n) \times \Quot_{d}^{r_2}(\A^n) \times \Quot_{d}^{r_1 r_2}(\A^n) } \\
	& {Quot_{d}^{r_1}(\A^n)}
	\arrow["\iota", hook, from=1-1, to=1-2]
	\arrow["{\pr_1}", from=1-2, to=2-2]
\end{tikzcd}\]
\noindent
which we will also denote by $\pr_1 \colon \Bilin_{d,d,d}^{r_1, r_2}(\A^n) \rightarrow \Quot_{d}^{r_1}(\A^n)$. 
For $\Bbbk$-points we will write 
\begin{equation*}
    \pr_1(\Bbbk) \colon \Bilin_{d,d,d}^{r_1, r_2}(\A^n)(\Bbbk) \rightarrow \Quot_{d}^{r_1}(\A^n)(\Bbbk).
\end{equation*}
Let $\U_d^{r_1} \subseteq \Quot_{d}^{r_1}(\A^n)$ denote the open subfunctor that gives 
the principal component of the Quot scheme. 
For the $\Bbbk$-points of the open subsheme $\U_d ^{r_1} (\Bbbk)$ 
consider the preimage $\pr_1(\Bbbk)^{-1}(\U_d ^{r_1}(\Bbbk))$. 
It coincides with the subset of $\Bilin_{d,d,d}^{r_1, r_2}(\A^n) (\Bbbk)$ defined as 
    \begin{equation*}
        \W(\Bbbk) = \{ S^{\oplus r_1} \xrightarrowdbl[]{p_1} M_1, \, S^{\oplus r_2} \xrightarrowdbl[]{p_2} M_2, \, M_1 \otimes M_2 \xrightarrowdbl[]{\pi} M_3 \mid M_i \text{ correspond to a tuple of points } \sfrac{S}{I} \} _{/ \sim}.
    \end{equation*}
\noindent
We will now describe the points of $\W(\Bbbk)$.

\begin{proposition}\label{r:prpalgiso}
    Suppose there is an isomorphism $M_1 \simeq \sfrac{S}{I}$. 
    Then also $M_2 \simeq M_3 \simeq \sfrac{S}{I}$. 
\end{proposition}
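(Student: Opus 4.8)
The plan is to combine the right-exactness of $\otimes_S$ with a dimension count forced by surjectivity of $\pi$, and then to compare two cyclic modules. First I would unwind the hypothesis: since $M_1 \simeq \sfrac{S}{I}$ and $M_1$ corresponds to a tuple of $d$ distinct $\kk$-points, writing $I = \mm_1 \cap \dots \cap \mm_d$ for the corresponding distinct maximal ideals of $S$ gives $\sfrac{S}{I} \simeq \prod_{j=1}^{d} \sfrac{S}{\mm_j} \simeq \kk^d$, and in particular $\dim_\kk \sfrac{S}{I} = d$.

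Next I would compute $M_1 \otimes_S M_2$. Right-exactness of $- \otimes_S M_2$ gives
\[
    M_1 \otimes_S M_2 \;\simeq\; \sfrac{S}{I} \otimes_S M_2 \;\simeq\; \sfrac{M_2}{I M_2},
\]
which is a quotient of $M_2$, so $\dim_\kk(M_1 \otimes_S M_2) \le \dim_\kk M_2 = d$. On the other hand $\pi \colon M_1 \otimes_S M_2 \twoheadrightarrow M_3$ is surjective and $\dim_\kk M_3 = d$, so $\dim_\kk(M_1 \otimes_S M_2) \ge d$. Hence all three dimensions equal $d$, which forces the quotient map $M_2 \twoheadrightarrow \sfrac{M_2}{I M_2}$ and the surjection $\pi$ to be isomorphisms. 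Therefore $I M_2 = 0$, so $M_1 \otimes_S M_2 \simeq M_2$, the map $\pi$ is a uniquely defined isomorphism $M_1 \otimes_S M_2 \xrightarrow{\simeq} M_3$, and $M_3 \simeq M_2$; this also settles the claim from the introduction about $\pi$.

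It then remains to identify $M_2$ with $\sfrac{S}{I}$. As a point of $\W$, the module $M_2$ corresponds to a tuple of points, so $M_2 \simeq \sfrac{S}{J}$ for some radical ideal $J \lhd S$ cutting out $d$ points. Since $I M_2 = 0$ we have $I \subseteq \Ann_S M_2 = J$, so there is a surjection $\sfrac{S}{I} \twoheadrightarrow \sfrac{S}{J} \simeq M_2$ of $\kk$-vector spaces, both of dimension $d$; hence it is an isomorphism, and $M_2 \simeq \sfrac{S}{I}$. Combined with $M_3 \simeq M_2$ this gives $M_2 \simeq M_3 \simeq \sfrac{S}{I}$.

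The substantive step is the dimension squeeze in the middle: it simultaneously forces $I M_2 = 0$ and makes $\pi$ an isomorphism, and everything on either side of it is formal. The only place the hypothesis is used beyond "$M_1$ is a tuple of points" is the cyclicity of $M_2$ in the last step; from $M_1 \simeq \sfrac{S}{I}$ alone one only learns that $M_2$ is a module over $\sfrac{S}{I} \simeq \kk^d$ with $\dim_\kk M_2 = d$, which does not determine $M_2$ up to isomorphism (e.g. $(\sfrac{S}{\mm_1})^{\oplus d}$ is another such module), so it matters that $M_2$, as a point of $\W$, is itself a tuple of points.
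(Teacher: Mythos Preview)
Your dimension squeeze---$M_1 \otimes_S M_2 \simeq M_2/IM_2$ has $\kk$-dimension at most $d$, with equality forced by the surjection $\pi$ onto the $d$-dimensional $M_3$, hence $IM_2 = 0$ and $\pi$ is an isomorphism---is exactly the paper's core argument. You diverge in the last step: the paper argues via supports, asserting that the dimension equality gives $\Supp M_2 = \Supp M_1$ and then that each localization $(M_j)_{\mm_i}$ is one-dimensional, whence $M_j \simeq \bigoplus_i S/\mm_i \simeq S/I$; you instead use directly that $M_2$, as a point of $\W(\kk)$, is by definition a tuple of points, so $M_2 \simeq S/J$ for some radical $J$, and then $I \subseteq \Ann_S M_2 = J$ together with $\dim_\kk S/I = \dim_\kk S/J = d$ forces $J = I$.

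Your closing observation is actually sharper than the paper here. From $M_1 \simeq S/I$ alone one only obtains $\Supp M_2 \subseteq V(I) = \Supp M_1$, not equality: taking $M_2 = (S/\mm_1)^{\oplus d}$ gives $IM_2 = 0$, $\dim_\kk M_2 = d$, and $M_1 \otimes_S M_2 \simeq (S/\mm_1)^{\oplus d}$ still surjects onto a $d$-dimensional $M_3$, yet $M_2 \not\simeq S/I$. So the paper's step ``Therefore $\Supp M_2 = \Supp M_1$'' does not follow from $IM_2 = 0$ by itself, and your reading---that the proposition is describing points already in $\W(\kk)$, where $M_2$ is assumed to be a tuple of points---is precisely what makes the statement correct. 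With that hypothesis your argument is clean and complete; the paper's support argument can be repaired the same way, since once $M_2$ is a tuple of $d$ points with $\Supp M_2 \subseteq \Supp M_1$ and $|\Supp M_1| = d$, equality of supports is immediate.
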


\begin{proof}
    Since $\dim_{\Bbbk} M_i=d$ for $i=1,2,3$ 
    and $M_1$ corresponds to a tuple of points, we can write 
    \begin{equation*}
            \Supp M_1 = \{ \mm_1, \dots, \mm_d \} \quad \text{and} \quad
            \Supp M_2 = \{ \nn_{1}, \dots, \nn_{s} \}. 
    \end{equation*}
    We have the decomposition 
    $M_1 \otimes M_2 \simeq \oplus _{i,j} (M_1)_{\mm_i} \otimes (M_2)_{\nn_j}$ 
    with $(M_1)_{\mm_i} \otimes (M_2)_{\nn_j} = 0$ for $\mm_i \neq \nn_j$. 
    Since $M_1 \simeq \sfrac{S}{I}$, we get the isomorphism 
    $M_1 \otimes M_2 \simeq \sfrac{M_2}{I M_2}$, 
    and thus $\dim _{\kk} M_1 \otimes M_2 \leq \dim_{\Bbbk} M_2 = d$.  
    The equality is necessary for the existence of the surjection 
    $M_1 \otimes M_2 \twoheadrightarrow M_3$, since $\dim_{\Bbbk} M_3 = d$. 
    Therefore $\Supp M_2 = \Supp M_1$. 
    The existence of $M_1 \otimes M_2 \twoheadrightarrow M_3$ implies 
    $\Supp M_3 \subseteq \Supp M_1 \otimes M_2$, 
    so $\Supp M_1 = \Supp M_2 = \Supp M_3$ 
    and since $\vert \Supp M_1 \vert = \dim_{\Bbbk} M_1$ 
    it must be $\dim_{\Bbbk}(M_1)_{\mm_i}=1$ for each $i=1, \ldots, d$. 
    It follows that $M_j \simeq \sfrac{S}{I}$ for $j=1,2,3$. 
\end{proof}

\begin{proposition}
    The map $M_1 \otimes M_2 \xrightarrowdbl[]{\pi} M_3$ is unique up to isomorphism, 
    hence the points of $\W(\Bbbk)$ are determined by the surjections 
    $S^{\oplus r_j} \xrightarrowdbl[]{p_j} M_j \simeq \sfrac{S}{I}$ for $j=1,2$. 
\end{proposition}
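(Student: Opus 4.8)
The plan is to combine Proposition~\ref{r:prpalgiso} with a dimension count. First I would record precisely what that proposition yields beyond the isomorphism type of the $M_j$: since $M_1 \simeq \sfrac{S}{I}$ is cyclic, we have $M_1 \otimes_S M_2 \simeq \sfrac{M_2}{IM_2}$, and the existence of the surjection $\pi \colon M_1 \otimes M_2 \twoheadrightarrow M_3$ together with $\dim_\kk M_3 = d$ forces $\dim_\kk(M_1 \otimes M_2) = d$ --- this is exactly the equality established inside the proof of Proposition~\ref{r:prpalgiso}, where $\dim_\kk \sfrac{M_2}{IM_2} \le d$ and equality is necessary for $\pi$ to be surjective.

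Next I would observe that $\pi$ is therefore a surjection of $\kk$-vector spaces of equal finite dimension $d$, hence an isomorphism of $S$-modules; in particular $\ker \pi = 0$. Any two valid choices $\pi, \pi'$ are then isomorphisms $M_1 \otimes M_2 \xrightarrow{\simeq} M_3$, so $\pi' = (\pi' \circ \pi^{-1}) \circ \pi$ with $\pi' \circ \pi^{-1} \in \Aut_S(M_3)$, which is the asserted uniqueness up to isomorphism.

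It remains to translate this into a statement about points of $\W(\Bbbk)$. Write $F_j = S^{\oplus r_j}$ for $j = 1,2$ and $F_3 = S^{\oplus r_1 r_2}$, and set $Q = K_1 \otimes F_2 + F_1 \otimes K_2 = \ker(F_3 \twoheadrightarrow M_1 \otimes M_2)$, as in Section~\ref{r:secbilintangent}. Since $M_3 = \sfrac{F_3}{K_3}$ and $\pi$ is induced by the inclusion $Q \subseteq K_3$, the vanishing $\ker \pi = 0$ forces $K_3 = Q$. Thus the kernel $K_3$ is completely determined by $K_1$ and $K_2$, and since we identify surjections with equal kernels, the point $[p_1, p_2, \pi]$ itself is determined by $p_1$ and $p_2$; equivalently, $\pi$ is forced to be the canonical isomorphism $M_1 \otimes M_2 \xrightarrow{\simeq} M_1 \otimes M_2$.

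I do not expect a genuine obstacle here: the one nontrivial input, $\dim_\kk(M_1 \otimes M_2) = d$, is already contained in the proof of Proposition~\ref{r:prpalgiso}. The only point requiring slight care is the bookkeeping identifying $K_3$ with $Q$, which is precisely what upgrades ``$\pi$ unique up to isomorphism'' to ``$[p_1,p_2,\pi]$ literally determined as a point of $\W(\Bbbk)$''.
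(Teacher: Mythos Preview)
Your proposal is correct and follows essentially the same approach as the paper: both argue that $\pi$ is a surjection between $S$-modules of equal $\kk$-dimension $d$, hence an isomorphism with $\ker\pi = 0$, and then use the convention of identifying surjections with equal kernels to conclude there is a unique equivalence class. The paper compresses the first step by directly invoking $\sfrac{S}{I}\otimes_S \sfrac{S}{I}\simeq \sfrac{S}{I}$, while you reach the same dimension equality via $M_1\otimes_S M_2 \simeq \sfrac{M_2}{IM_2}$ and the argument inside Proposition~\ref{r:prpalgiso}; your additional bookkeeping $K_3 = Q$ simply makes explicit what the paper's phrase ``only one equivalence class'' encodes.
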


\begin{proof}
    We identify $\sfrac{S}{I} \otimes_S \sfrac{S}{I} \simeq \sfrac{S}{I}$, 
    so the map can be written as $\sfrac{S}{I} \xrightarrowdbl[]{\pi} \sfrac{S}{I}$, 
    which shows that it must be an isomorphism. 
    We identify maps with equal kernels, so since $\ker \pi =0$, 
    in this case there is only one equivalence class, as claimed. 
\end{proof}

\noindent
Since $\W(\Bbbk)$ arises as a continuous preimage 
of an open set $\U_d ^{r_1}(\Bbbk)$, it is also open. 
Therefore there is an open subscheme $W \subseteq Bilin$ such that $\W(\kk)=W(\kk)$, 
which we extend to the representable functor $\W = \underline{W}$, 
by Proposition \ref{r:prpkpointsext}. 
We define the main irreducible component of the Bilinear scheme as the closure of $W$. 

\begin{proposition}\label{r:prpbilindim}
    The dimension of $W$ is $nd + (r_1 -1)d + (r_2 - 1)d$. 
\end{proposition}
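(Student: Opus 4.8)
The plan is to repeat the computation $\dim U = nd + (r-1)d$ for the principal component of the Quot scheme almost verbatim, exploiting that a $\kk$-point of $\W$ carries no information beyond the pair of surjections $p_1, p_2$. Indeed, by the preceding two propositions such a point consists of a tuple of $d$ points $\sfrac{S}{I}$ together with surjections $p_j \colon S^{\oplus r_j} \twoheadrightarrow M_j$ satisfying $M_1 \simeq M_2 \simeq \sfrac{S}{I}$, the remaining map $\pi$ being the unique isomorphism $M_1 \otimes_S M_2 \simeq \sfrac{S}{I}$. Since $W$ is an open subscheme of $Bilin$, hence locally of finite type over $\kk = \kkbar$, it suffices to compute its dimension on a suitable open chart, exactly as in Propositions \ref{r:prpue1iso} and \ref{r:prpue1dim}.

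First I would cover $W$ by open charts all isomorphic to a distinguished one. For $v \in \kk^{\oplus r_1}$ and $w \in \kk^{\oplus r_2}$ let $W_{v,w}(\kk) \subseteq \W(\kk)$ be the locus of $[p_1, p_2, \pi]$ with $p_1(Sv) = M_1$ and $p_2(Sw) = M_2$; this equals $\pr_1(\kk)^{-1}(\U_v(\kk)) \cap \pr_2(\kk)^{-1}(\U_w(\kk))$, where $\U_v \subseteq \Quot_d^{r_1}(\A^n)$ and $\U_w \subseteq \Quot_d^{r_2}(\A^n)$ are the open subschemes of Section \ref{r:secquot}, so it is open in $\W(\kk)$ and, by Proposition \ref{r:prpkpointsext}, extends to an open subscheme $W_{v,w} \subseteq W$. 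These cover $W$: given $[p_1, p_2, \pi] \in \W(\kk)$ with $M_1 \simeq \sfrac{S}{I} \simeq \kk^d$, the images $p_1(e_1), \dots, p_1(e_{r_1})$ generate $M_1$ over $S$, so for each of the $d$ points the set of $v \in \kk^{\oplus r_1}$ for which $p_1(v)$ vanishes at that point is a proper hyperplane; since $\kk$ is infinite, these $d$ hyperplanes do not cover $\kk^{\oplus r_1}$, so some $v$ makes $p_1(v)$ a unit and hence $p_1(Sv) = M_1$, and similarly for $p_2$ and some $w$. Changing the bases of $S^{\oplus r_1}$ and $S^{\oplus r_2}$ by $\GL_{r_1}(\kk) \times \GL_{r_2}(\kk)$ induces automorphisms of $W$ acting transitively on nonzero pairs $(v, w)$, so all $W_{v,w}$ with $v, w \neq 0$ are isomorphic and $\dim W = \sup_{v,w} \dim W_{v,w} = \dim W_{e_1, f_1}$, where $e_1 \in \kk^{\oplus r_1}$ and $f_1 \in \kk^{\oplus r_2}$ are the first standard basis vectors.

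It remains to compute $\dim W_{e_1, f_1}$. On this chart $\pr_1$ and $\pr_2$ take values in $U_{e_1} \subseteq \Quot_d^{r_1}(\A^n)$ and $U_{f_1} \subseteq \Quot_d^{r_2}(\A^n)$; composing with the structure maps of these to $T$ --- the vector-bundle projections of Proposition \ref{r:prpue1iso} --- and invoking Proposition \ref{r:prpalgiso}, which forces the tuples of points underlying $M_1$ and $M_2$ to agree, together with the uniqueness of $\pi$, one obtains an isomorphism $W_{e_1, f_1} \simeq U_{e_1} \times_T U_{f_1}$. By Proposition \ref{r:prpue1iso}, $U_{e_1} \simeq \mathbb{V}(\M)^{\times (r_1 - 1)}$ and $U_{f_1} \simeq \mathbb{V}(\M)^{\times (r_2 - 1)}$ as schemes over $T$, so $W_{e_1, f_1}$ is the fiber power $\mathbb{V}(\M)^{\times (r_1 + r_2 - 2)}$, a vector bundle of rank $d(r_1 + r_2 - 2)$ over $T$. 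Since $\dim T = nd$, this gives, as in Proposition \ref{r:prpue1dim},
\begin{equation*}
    \dim W = \dim W_{e_1, f_1} = nd + d(r_1 - 1) + d(r_2 - 1) = nd + (r_1 - 1)d + (r_2 - 1)d.
\end{equation*}
The step I expect to be the main obstacle is the functorial verification of the isomorphism $W_{e_1, f_1} \simeq U_{e_1} \times_T U_{f_1}$ over an arbitrary base $\Spec A$: one must check that a point of $Bilin$ lying over this open locus is precisely a pair of Quot-scheme points with the same underlying family of tuples of points, together with the uniquely determined $\pi$, and that the fiber product over $T$ rather than over $\Spec \kk$ is exactly what records the identification $M_1 \simeq M_2$. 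Once this is established, the dimension count is identical to the one already carried out for the Quot scheme.
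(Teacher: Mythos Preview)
Your argument is correct, and it takes a genuinely different route from the paper's. The paper applies the generic fiber dimension theorem (Proposition~\ref{r:prpfibdim}) to the projection $\pr_1 \colon W \to \U_d^{r_1}$: over a fixed $[p_1]$ with $M_1 \simeq \sfrac{S}{I}$, the fiber is identified with the space of admissible $[p_2]$, which via Proposition~\ref{r:prpue1iso} has dimension $(r_2-1)d$, and then $\dim W = \dim \U_d^{r_1} + (r_2-1)d$. You instead treat $p_1$ and $p_2$ symmetrically, passing to a chart $W_{e_1,f_1}$ and identifying it with the fiber product $U_{e_1} \times_T U_{f_1} \simeq \mathbb{V}(\M)^{\times(r_1+r_2-2)}$ over $T$. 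Your approach buys you more: it exhibits $W$ (on a chart covering it up to isomorphism) as a vector bundle over the irreducible scheme $T$, so irreducibility of $W$ comes for free, whereas the paper's invocation of Proposition~\ref{r:prpfibdim} formally requires $W$ to be an irreducible variety, a hypothesis the paper does not separately verify. The trade-off is that the paper's computation is shorter once that hypothesis is granted, while yours demands the functorial check of $W_{e_1,f_1} \simeq U_{e_1} \times_T U_{f_1}$ that you correctly flag as the main remaining work.
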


\noindent
In order to prove Proposition \ref{r:prpbilindim}, we will use \cite[Theorem 12.4.1]{vakil}. 
We do not need this result in full generality, 
it suffices to consider the special case stated in Corollary 12.4.2. 

\begin{proposition}[{\cite[Corollary 12.4.2]{vakil}}]\label{r:prpfibdim}
    Suppose $f \colon X \rightarrow Y$ is a finite type morphism 
    of irreducible $\Bbbk$-varieties. 
    Then there exists a non-empty open subset $V \subseteq Y$ 
    such that for all $p \in V$, the fiber over $p$ has dimension 
    $\dim X - \dim Y$, or is empty. 
\end{proposition}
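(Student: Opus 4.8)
The statement is standard (it is \cite[Corollary 12.4.2]{vakil}); the plan is to reduce it to the dimension formula for flat morphisms. First I would dispose of the case in which $f$ is not dominant: then $\overline{f(X)} \subsetneq Y$ is a proper closed subvariety, so $V := Y \setminus \overline{f(X)}$ is a non-empty open subset over which every fibre is empty, and the conclusion holds trivially. Hence I may assume $f$ is dominant, so that it induces an inclusion of function fields $\kk(Y) \hookrightarrow \kk(X)$ and
\[
    \dim X - \dim Y = \operatorname{trdeg}_{\kk(Y)} \kk(X).
\]

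The assertion is local on $Y$, so after passing to an affine open of $Y$ and to an affine open of $X$ dominating it, I may assume $X = \Spec A$ and $Y = \Spec B$ with $A, B$ finitely generated $\kk$-algebras that are domains and $B \hookrightarrow A$ injective. By generic flatness there is a non-empty distinguished open $\Spec B_g \subseteq \Spec B$ such that $A \otimes_B B_g$ is a flat $B_g$-module; replacing $Y$ by $\Spec B_g$ and $X$ by its preimage, I may assume $f$ is flat. For a flat morphism locally of finite type one has the fibre-dimension identity $\dim_x f^{-1}(f(x)) = \dim_x X - \dim_{f(x)} Y$ for every $x \in X$. Since $X$ and $Y$ are irreducible varieties over the field $\kk$, they are equidimensional, i.e.\ $\dim_x X = \dim X$ and $\dim_y Y = \dim Y$ at every point; hence every non-empty fibre of this $f$ has dimension $\dim X - \dim Y$. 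Finally, a flat morphism locally of finite type is open, so $f(X)$ is a non-empty open subset $V \subseteq Y$, and by construction every fibre over $V$ is non-empty of dimension $\dim X - \dim Y$, which is the claim (fibres over $Y \setminus V$ being permitted to be empty).

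The only delicate point is the reduction to affine opens: shrinking $X$ to a dense affine open $\Spec A$ could a priori lower the dimension of some fibres. This is harmless for the \emph{general} fibre, since the complement $X \setminus \Spec A$ is a proper closed subvariety of $X$, hence of dimension $< \dim X$, so by Chevalley's upper semicontinuity of fibre dimension the locus of $p \in Y$ over which $X \setminus \Spec A$ contributes a component of dimension $\geq \dim X - \dim Y$ lies in a proper closed subset, which I simply remove from $V$. Alternatively, and perhaps more cleanly, one avoids the issue entirely by arguing directly over the generic point $\eta = \Spec \kk(Y)$: the generic fibre $X_\eta$ is a $\kk(Y)$-variety of dimension $\operatorname{trdeg}_{\kk(Y)} \kk(X) = \dim X - \dim Y$; one chooses a Noether normalization of an affine chart of $X_\eta$, spreads the finitely many coefficients involved out to an affine open $V \subseteq Y$, and combines the resulting finite surjection onto affine space over $\kappa(p)$ with upper semicontinuity to pin the fibre dimension to $\dim X - \dim Y$ for all $p \in V$.
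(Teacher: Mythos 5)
The paper does not prove this statement at all: it is quoted verbatim from \cite[Corollary 12.4.2]{vakil} as a known result, so there is no in-paper argument to compare against. Your proof is correct and self-contained. The reduction to the dominant case, the passage to affines, generic flatness, the fibre-dimension identity for flat morphisms locally of finite type, and openness of flat finite-type morphisms are all applied correctly (note that $B$ is an integral Noetherian domain, so generic flatness applies, and irreducible $\kk$-varieties satisfy $\dim_x X = \dim X$ at every point, so the pointwise identity does give $\dim X_p = \dim X - \dim Y$). The one place you rightly flag as delicate --- shrinking $X$ to a single affine chart --- is handled a bit awkwardly: upper semicontinuity of fibre dimension lives on the source, and pushing the bad locus down to a proper closed subset of $Y$ needs an extra step (constructibility of its image plus a dimension count, or Noetherian induction on $\dim X$). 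The cleanest repair, close to your second alternative, is to cover $X$ by finitely many affine opens $U_i$; each non-empty $U_i$ is dense in the irreducible $X$, so $\kk(U_i)=\kk(X)$ and $\dim U_i=\dim X$, and applying the affine case to each and intersecting the finitely many resulting opens of $Y$ gives that every $(U_i)_p$ over the common open is empty or of dimension $\dim X-\dim Y$, hence so is $f^{-1}(p)=\bigcup_i (U_i)_p$. Your generic-flatness route also differs from Vakil's own proof (Noether normalization of the generic fibre and spreading out, which is your closing alternative); both are standard, and generic flatness arguably gives the openness of the image for free.
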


\begin{proof}[Proof of Theorem \ref{r:prpbilindim}]
    We will apply Proposition \ref{r:prpfibdim} to our setting. 
    Let 
    \begin{equation*}
        [p_1] = \big[S^{\oplus r_1} \xrightarrowdbl[]{p_1} M_1 \simeq \sfrac{S}{I} \big] \in \U_d ^{r_1}(\Bbbk).
    \end{equation*} 
    Consider the fiber $\pr_1(\Bbbk)^{-1}([p_1])$ over $[p_1]$. 
    It is of the form 
    \begin{equation*}
        \pr_1(\Bbbk)^{-1}([p_1]) = \big[S^{\oplus r_1} \xrightarrowdbl[]{p_1} M_1, \, S^{\oplus r_2} \xrightarrowdbl[]{p_2} M_2, \, M_1 \otimes M_2 \xrightarrowdbl[]{\pi} M_3 \big],
    \end{equation*}
    where the map $S^{\oplus r_1} \xrightarrowdbl[]{p_1} M_1$ is fixed 
    and the map $M_1 \otimes M_2 \xrightarrowdbl[]{\pi} M_3$ is unique up to isomorphism. 
    It follows that the fiber is determined by the second map 
    $S^{\oplus r_2} \xrightarrowdbl[]{p_2} M_2$. 
    Restrict to an open affine neighborhood $\Spec A$ of $[p_1]$, 
    then Proposition \ref{r:prpue1iso}  
    gives an isomorphism of the fiber over $[p_1]$ with 
    $\Gamma(\Spec A, \, M_2)^{r_2-1} \simeq \A_{\Bbbk}^{r_2-1}(A)$. 
    By Proposition \ref{r:prpfibdim} we get 
    \begin{align*}
        \begin{split}
            \dim W & = \dim pr_1^{-1}([p_1]) + \dim \U_d ^{r_1} \\
            & = \dim \A_{\Bbbk}^{d(r_2-1)} + \dim \T + \dim \A_{\Bbbk}^{d(r_1-1)} \\
            & = d(r_2 -1) + nd + (r_1-1)d.
        \end{split}
    \end{align*}

\end{proof}

\subsection{The totally degenerate locus of the Bilinear scheme}\label{r:secuglycomp}

Assume $r, r_1,r_2 \geq d$. 
Fix the maximal ideal $\mm = (x_1, \dots, x_n) \ \lhd \ S = \kk [ x_1, \dots, x_n ]$. 
We now define the locus that corresponds to 
surjections of $S$-modules of the form $(\sfrac{S}{\mm})^{\oplus d}$. 
This is a generalization of the totally degenerate locus of the Quot scheme. 
Consider the subset of $ \Bilin_{d,d,d}^{r_1, r_2}(\A^n) (\kk)$ that consists of points of the form 
\begin{equation}\label{r:eqzmbilin}
    [q_1,q_2, \pi] = \big[S^{\oplus r_1} \xrightarrowdbl[]{q_1} M_1 \simeq (\sfrac{S}{\mm})^{\oplus d}, \, S^{\oplus r_2} \xrightarrowdbl[]{q_2}  M_2 \simeq (\sfrac{S}{\mm})^{\oplus d}, \, M_1 \otimes M_2 \xrightarrowdbl[]{\pi} M_3 \big].
\end{equation}
\noindent
We have assumed $r_i \geq d$, so the surjections $q_i$ exist. From the isomorphism
\begin{equation}\label{r:eqdsquare}
    M_1 \otimes M_2 \simeq (\sfrac{S}{\mm})^{\oplus d} \otimes (\sfrac{S}{\mm})^{\oplus d} \simeq (\sfrac{S}{\mm})^{\oplus d^2}
\end{equation}
\noindent 
the module $M_3$ must also be isomorphic to $(\sfrac{S}{\mm})^{\oplus d}$. 
The resulting subset is  
\[
\Zz_{\mm}(\Bbbk) = 
\left\{\begin{minipage}[c]{\widthof{\text{$S^{\oplus r_i} \xrightarrowdbl[]{q_i} M_i \simeq (\sfrac{S}{\mm})^{\oplus d}$ for $i=1,2$}}}
\centering
\text{$S^{\oplus r_i} \xrightarrowdbl[]{q_i} M_i \simeq (\sfrac{S}{\mm})^{\oplus d}$ for $i=1,2$}\\
\text{$M_1 \otimes_S M_2 \xrightarrowdbl[]{\pi} M_3 \simeq (\sfrac{S}{\mm})^{\oplus d}$}
\end{minipage}\right\}_{/ \sim}
\]
\noindent
Our goal is to define a closed subfunctor $\Zz_{\mm}$ of $\Bilin_{d,d,d}^{r_1, r_2}(\A^n)$ 
whose $\kk$-points coincide with this subset. 
In order to achieve this, we define a closed subfunctor 
$\Zz_{\mm}^{\Q} \subseteq \Quot_d^{r_1}(\A^n) \times \Quot_d^{r_2}(\A^n)$ 
and show that $\Zz_{\mm}(\kk)$ coincides with the $\kk$-points 
of the preimage of $\Zz_{\mm}^{\Q}$ along the map defined as the composition 
\begin{equation}\label{r:eqrho}
    \rho \colon \Bilin_{d,d,d}^{r_1, r_2}(\A^n) \hookrightarrow 
    \Q \twoheadrightarrow  
    \Quot_d^{r_1}(\A^n) \times \Quot_d^{r_2}(\A^n), 
\end{equation}
where $\Q = \Quot_d^{r_1}(\A^n) \times \Quot_d^{r_2}(\A^n) \times \Quot_d^{r_1 r_2}(\A^n)$. 
To define the subfunctor $\Zz_{\mm}^{\Q}$, notice that surjections of the form 
$S^{\oplus r} \xrightarrowdbl[]{q} M \simeq (\sfrac{S}{\mm})^{\oplus d}$ 
define a subset of $\Quot_d^r(\A^n)(\kk)$ that consists of $\kk$-points 
of $\Zz_{\mm} \subseteq \Quot_d^{r_1}(\A^n)$. 
Taking the product
\begin{equation*}
    \Zz_{\mm}^{\Q}(\kk)= \Zz_{\mm}(\kk) \times \Zz_{\mm}(\kk) \subseteq (\Quot_d^{r_1}(\A^n) \times \Quot_d^{r_2}(\A^n))(\kk)
\end{equation*}  
yields the desired functor $\Zz_{\mm}^{\Q}= \Zz_{\mm} \times \Zz_{\mm}$. 
The points of $\Zz_{\mm}^{\Q}(\kk)$ are of the form 
\begin{align*}
    [q_1,q_2] = \big[S^{\oplus r_1} \xrightarrowdbl[]{q_1} M_1 \simeq (\sfrac{S}{\mm})^{\oplus d}, \, S^{\oplus r_2} \xrightarrowdbl[]{q_2}  M_2 \simeq (\sfrac{S}{\mm})^{\oplus d} \big].
\end{align*}
By Proposition \ref{r:prpgrzmiso} we have the isomorphism 
$\Zz_{\mm}^{\Q} \simeq \Gr(d,r_1) \times \Gr(d, r_2)$. 
Consider the preimage $\Zz_{\mm}^{\B}(\kk) = \rho(\kk)^{-1}(\Zz_{\mm}^{\Q}(\kk))$, 
where $\rho(\kk)$ denotes the map induced on $\kk$-points by $\rho$ defined in (\ref{r:eqrho}). 
By the isomorphism (\ref{r:eqdsquare}) the points of $\Zz_{\mm}^{\B}(\kk)$ are of the form 
\[
\Zz_{\mm}^{\B}(\Bbbk) = 
\left\{\begin{minipage}[c]{\widthof{\text{$S^{\oplus r_i} \xrightarrowdbl[]{q_i} M_i \simeq (\sfrac{S}{\mm})^{\oplus d}$ for $i=1,2$}}}
\centering
\text{$S^{\oplus r_i} \xrightarrowdbl[]{q_i} M_i \simeq (\sfrac{S}{\mm})^{\oplus d}$ for $i=1,2$}\\
\text{$M_1 \otimes_S M_2 \xrightarrowdbl[]{\pi} M_3 \simeq (\sfrac{S}{\mm})^{\oplus d}$}
\end{minipage}\right\}_{/ \sim}
\]
\noindent
which coincides with the definition of the subset 
$\Zz_{\mm}(\kk) \subseteq \Bilin_{d,d,d}^{r_1, r_2}(\A^n)(\kk)$ we have given in (\ref{r:eqzmbilin}). 
It follows that this subset is closed. 
Next we define the corresponding closed subfunctor 
$\Zz_{\mm} = \Zz_{\mm}^{\B}$ of $\Bilin_{d,d,d}^{r_1,r_2}(\A^n)$. 
We will use the results from Section \ref{r:secquot} to show that 
$\Zz_{\mm} \hookrightarrow \Bilin_{d,d,d}^{r_1,r_2}(\A^n)$ 
is of dimension $(r_1-d)d + (r_2 -d)d + (d^2 - d)d$.
Let 
\begin{equation*}
    \bar{\rho} = \rho_{\vert _{\Zz_{\mm}^{\B}}} \colon \Zz_{\mm}^{B} \rightarrow \Zz_{\mm}^{\Q}, \quad
    \text{where } \ \Zz_{\mm}^{\Q} = \Zz_{\mm} \times \Zz_{\mm} \subseteq  \Quot_d^{r_1}(\A^n) \times \Quot_d^{r_2}(\A^n),
\end{equation*} 
as before. 
Take a point $[q_1,q_2] \in \Zz_{\mm}^{\Q}(\kk)$, where 
$q_i \colon S^{\oplus r_i} \twoheadrightarrow M_i \simeq (\sfrac{S}{\mm})^{\oplus d}$ for $i=1,2$. 
Consider the fiber $\bar{\rho}(\kk)^{-1}([q_1,q_2]) \in \Zz_{\mm}^{\B}(\kk)$. 
Since we have the isomorphism $M_1 \otimes M_2 \simeq (\sfrac{S}{\mm})^{\oplus d^2}$ 
and the assumption $\dim_{\kk} M_3 = d$, there must also be an isomorphism $M_3 \simeq (\sfrac{S}{\mm})^{\oplus d}$. 
Hence the fiber over $[q_1,q_2]$ consists of $S$-linear surjections of the form 
$\pi \colon (\sfrac{S}{\mm})^{\oplus d^2} \twoheadrightarrow (\sfrac{S}{\mm})^{\oplus d}$. 

\begin{proposition}
    Let $[q_1,q_2] \in \Zz_{\mm}^{\Q}(\kk)$ be a point corresponding to surjections of the form 
    $q_i \colon S^{\oplus r_i} \twoheadrightarrow M_i \simeq (\sfrac{S}{\mm})^{\oplus d}$ for $i=1,2$. 
    Then the fiber $\bar{\rho}(\kk)^{-1}([q_1,q_2])$ 
    is isomorphic to $\Gr(d, d^2)(\kk)$. 
\end{proposition}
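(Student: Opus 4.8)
The plan is to show that, with $q_1$ and $q_2$ held fixed, a point of the fiber amounts to the choice of a $d$-dimensional quotient of a fixed $d^2$-dimensional $\kk$-vector space, which is exactly $\Gr(d,d^2)(\kk)$.

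First I would note that a point of $\bar{\rho}(\kk)^{-1}([q_1,q_2])$ is nothing but the equivalence class of the third surjection $\pi \colon M_1 \otimes_S M_2 \twoheadrightarrow M_3$: the maps $q_1,q_2$ are determined by $[q_1,q_2]$, and conversely any surjection $\pi$ onto a module with $\dim_{\kk} M_3 = d$ produces a point of $\Zz_{\mm}^{\B}(\kk)$ lying in this fiber, since $M_3$ is then automatically isomorphic to $(\sfrac{S}{\mm})^{\oplus d}$. The structural input to record is that $M_1 \otimes_S M_2$ is a \emph{fixed} vector space of $\kk$-dimension $d^2$ annihilated by $\mm$: already $\mm M_1 = 0$ forces $\mm(M_1 \otimes_S M_2) = 0$, and together with the identification (\ref{r:eqdsquare}) this realizes $M_1 \otimes_S M_2 \simeq (\sfrac{S}{\mm})^{\oplus d^2}$ concretely.

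Next I would strip off the module structure. Any quotient of $M_1 \otimes_S M_2$ is also killed by $\mm$, so an $S$-linear surjection $M_1 \otimes_S M_2 \twoheadrightarrow M_3$ is the same thing as a $\kk$-linear one, and identifying surjections with equal kernels is precisely the equivalence relation defining the Grassmannian functor. Using the basis of $M_1 \otimes_S M_2 \simeq \kk^{\oplus d^2}$ coming from (\ref{r:eqdsquare}), the fiber is therefore literally the set of $\kk$-linear surjections $\kk^{\oplus d^2} \twoheadrightarrow M_3$ with $\dim_{\kk} M_3 = d$ modulo equal kernels, i.e.\ $\Gr(d,d^2)(\kk)$. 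Alternatively, and perhaps cleanest to cite, one lifts $\pi$ along the componentwise quotient $S^{\oplus d^2} \twoheadrightarrow (\sfrac{S}{\mm})^{\oplus d^2} \simeq M_1 \otimes_S M_2$, whose kernel $\mm S^{\oplus d^2}$ is contained in the kernel of any such lift; this identifies the fiber with $\Zz_{\mm}(\kk) \subseteq \Quot_d^{d^2}(\A^n)(\kk)$, and Proposition \ref{r:prpgrzmiso} applied with $r = d^2$ gives $\Zz_{\mm} \simeq \Gr(d,d^2)$, hence the claim.

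I do not anticipate a real obstacle here: the only thing needing care is the bookkeeping around the identifications — checking that the correspondence between classes of $\pi$ and points of $\Gr(d,d^2)(\kk)$ is independent of the chosen basis of $M_1 \otimes_S M_2$ and compatible with the two equivalence relations — which is routine, since in the totally degenerate situation the surjection $S^{\oplus d^2} \twoheadrightarrow M_1 \otimes_S M_2$ has kernel exactly $\mm S^{\oplus d^2}$, so the relevant factorizations are forced.
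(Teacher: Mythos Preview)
Your proposal is correct and follows essentially the same idea as the paper: use the fact that $\mm$ annihilates $M_1 \otimes_S M_2 \simeq (\sfrac{S}{\mm})^{\oplus d^2}$ so that $S$-linear quotients are the same as $\kk$-linear ones, and read off $\Gr(d,d^2)(\kk)$. The paper's proof is simply the one-line version of your first argument, writing down the bijection via the isomorphism $\alpha_{\mm}\colon \kk \simeq \sfrac{S}{\mm}$; your alternative route through $\Zz_{\mm}(\kk) \subseteq \Quot_d^{d^2}(\A^n)(\kk)$ and Proposition~\ref{r:prpgrzmiso} is a mild repackaging of the same content.
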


\begin{proof}
    The isomorphism $\alpha_{\mm} \colon \kk \simeq \sfrac{S}{\mm}$ gives 
    a map $\phi \colon \Gr(d,d^2)(\kk) \rightarrow \bar{\rho}(\kk)^{-1}([q_1,q_2])$ 
    defined as 
    \begin{equation*}
        \big[\kk^{\oplus d^2} \xrightarrowdbl[]{\pi} V \simeq \kk^{\oplus d} \big] \mapsto 
        \big[(\sfrac{S}{\mm})^{\oplus d^2} \simeq \kk^{\oplus d^2} \xrightarrowdbl[]{\pi} V \simeq \kk^{\oplus d} \simeq (\sfrac{S}{\mm})^{\oplus d} \big].
    \end{equation*}
    This is clearly a bijection, so it gives the desired isomorphism. 
\end{proof}

\begin{proposition}\label{r:prpzmdim}
    The dimension of the closed subscheme $\Zz_{\mm}^{\B} \subseteq \Bilin_{d,d,d}^{r_1, r_2}(\A^n)$ 
    is $(r_1-d)d + (r_2 -d)d + (d^2 - d)d$.
\end{proposition}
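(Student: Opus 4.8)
The plan is to imitate the analysis of the open subscheme $U_{e_1}$ from Section~\ref{r:secquot}: I would realize $\Zz_{\mm}^{\B}$ as a Grassmann bundle over $\Zz_{\mm}^{\Q}$ via the map $\bar{\rho}$ and read off its dimension as $\dim\Zz_{\mm}^{\Q}$ plus the dimension of the fibre $\Gr(d,d^2)$. The two inputs are already available: by the isomorphism $\Zz_{\mm}^{\Q}=\Zz_{\mm}\times\Zz_{\mm}$ and Proposition~\ref{r:prpzaquotdim}, the base $\Zz_{\mm}^{\Q}$ is an irreducible variety of dimension $(r_1-d)d+(r_2-d)d$; and by the preceding Proposition every fibre of $\bar{\rho}\colon\Zz_{\mm}^{\B}\to\Zz_{\mm}^{\Q}$ is isomorphic to $\Gr(d,d^2)$, of dimension $(d^2-d)d$.

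To turn this into a proof I would pull back to $\Zz_{\mm}^{\Q}$ the universal quotients $\M_1,\M_2$ from $\Quot_d^{r_1}(\A^n)$ and $\Quot_d^{r_2}(\A^n)$ along the two projections. Since $\mm$ annihilates $\M_1$ and $\M_2$, the relative tensor product $\M_1\otimes\M_2$ coincides with $\M_1\otimes_{\Oo_{\Zz_{\mm}^{\Q}}}\M_2$ and is locally free of rank $d^2$, the pointwise version being the isomorphism~\eqref{r:eqdsquare}. On any affine open $\Spec A\subseteq\Zz_{\mm}^{\Q}$ trivializing this bundle, the restriction of the functor $\Zz_{\mm}^{\B}$ is the Grassmannian functor of rank-$d$ locally free quotients of $\Oo_{\Spec A}^{\oplus d^2}$ — the condition $\mm\M_3=0$ being automatic, as $\mm$ already kills $\M_1\otimes\M_2$ — so $\bar{\rho}^{-1}(\Spec A)\simeq\Spec A\times\Gr(d,d^2)$, exactly as in the trivialization computation preceding Proposition~\ref{r:prpue1iso}. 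Because $\Zz_{\mm}^{\Q}$ is irreducible, $\dim\Spec A=\dim\Zz_{\mm}^{\Q}$ for every nonempty such open, hence $\dim\bar{\rho}^{-1}(\Spec A)=\dim\Zz_{\mm}^{\Q}+\dim\Gr(d,d^2)=(r_1-d)d+(r_2-d)d+(d^2-d)d$; taking the supremum over an affine open cover of $\Zz_{\mm}^{\Q}$ gives the claim. Alternatively, noting that the local products glue to an irreducible variety $\Zz_{\mm}^{\B}$, one could instead apply Proposition~\ref{r:prpfibdim} to $\bar{\rho}$, every fibre of which has dimension $(d^2-d)d$.

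The step I expect to be the main obstacle is the sheaf-theoretic bookkeeping in the middle paragraph: verifying that $\M_1\otimes\M_2$ is locally free of rank exactly $d^2$ over $\Oo_{\Zz_{\mm}^{\Q}}$ rather than merely of rank $d^2$ at each $\kk$-point, and that after a trivialization a surjection $\M_1\otimes\M_2\twoheadrightarrow\M_3$ with $\M_3$ locally free of rank $d$ is, functorially in the test scheme, the same datum as a point of $\Gr(d,d^2)$. Both become routine once one keeps the $S$-module and $\Oo$-module structures separate and uses that $\mm$ acts as zero throughout, so that $(\sfrac{S}{\mm})\otimes_{\kk}\Oo=\Oo$ and the computation reduces to the purely linear-algebraic situation already handled in the Quot-scheme case.
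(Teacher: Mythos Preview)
Your proposal is correct and follows essentially the same approach as the paper: compute $\dim\Zz_{\mm}^{\B}$ as $\dim\Zz_{\mm}^{\Q}+\dim\Gr(d,d^2)$ via the map $\bar{\rho}$ and the preceding description of its fibres, invoking Proposition~\ref{r:prpfibdim}. The paper's proof is in fact just the one-line application of Proposition~\ref{r:prpfibdim} that you mention as your alternative; your local-trivialization argument is not needed for the paper's purposes, though it does supply the irreducibility of $\Zz_{\mm}^{\B}$ that Proposition~\ref{r:prpfibdim}, as stated, formally requires but the paper leaves implicit.
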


\begin{proof}
    By Proposition \ref{r:prpfibdim},  
    \begin{align*}
        \begin{split}
            \dim \Zz_{\mm}^{\B} & = \dim \Zz_{\mm}^{\Q} + \dim \bar{\rho}(\kk)^{-1}([q_1,q_2]) \\
            & = \dim (\Gr(d,r_1) \times \Gr(d, r_2)) + \dim \Gr(d,d^2) \\
            & = (r_1-d)d + (r_2 -d)d + (d^2 - d)d.
        \end{split}
    \end{align*}
\end{proof}

\subsection{Irreducibility}\label{r:secbilinisredu}

We will use Propositions \ref{r:prpbilindim} and \ref{r:prpzmdim} to find examples of parameters $n, r_i, d$ 
such that the Bilinear scheme $\Bilin_{d,d,d}^{r_1,r_2}(\A^n) $ is not irreducible. 
Let $Z_1$ denote the closure of the open subscheme $W \subseteq Bilin$ 
corresponding to surjections onto cyclic modules, as defined in Section \ref{r:secmaincomp}. 
Let $Z_2$ denote the closed subscheme representing the subfunctor $\Zz_{\mm}^{\B}$, as defined in Section \ref{r:secuglycomp}. 
By Lemma \ref{r:lemirreddim}, if $n, r_i, d$ are such that 
$\dim Z_1 < \dim Z_2$, then $Z_2$ must be contained in a different component than $Z_1$ and 
$\Bilin_{d,d,d}^{r_1,r_2}(\A^n)$ is reducible. 
Examples of parameters $r_i, d$ enjoying this property arise already for $n=1,2$. 

\begin{theorem}\label{r:thmredu}
    Let $r_i \geq d \geq 3$. If $n < d^2 - 3d +2$, then 
    $\Bilin_{d,d,d}^{r_1,r_2}(\A^n)$ is reducible. 
\end{theorem}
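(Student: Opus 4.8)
The plan is to combine the two dimension computations already in hand with the reducibility criterion discussed after Lemma~\ref{r:lemirreddim}, applied to the main component $Z_1 = \overline{W}$ and the totally degenerate locus $Z_2 = \Zz_{\mm}^{\B}$.

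First I would recall the dimensions. By Proposition~\ref{r:prpbilindim} one has $\dim Z_1 = \dim W = nd + (r_1-1)d + (r_2-1)d$, and by Proposition~\ref{r:prpzmdim} one has $\dim Z_2 = (r_1-d)d + (r_2-d)d + (d^2-d)d$. Subtracting and pulling out the common factor $d$,
\[
    \dim Z_2 - \dim Z_1 = \bigl( (r_1-d) + (r_2-d) + (d^2-d) - (r_1-1) - (r_2-1) - n \bigr)\, d = (d^2 - 3d + 2 - n)\, d,
\]
so the hypothesis $n < d^2 - 3d + 2$ is exactly what makes $\dim Z_2 > \dim Z_1$. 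I would also note that both loci are nonempty under the standing assumptions: since $r_i \geq d$ there exist surjections $S^{\oplus r_i} \twoheadrightarrow (\sfrac{S}{\mm})^{\oplus d}$, and a full-rank $d \times d^2$ matrix furnishes a map $\pi$, giving a point of $\Zz_{\mm}^{\B}$; and for $W$ one takes any radical ideal $I \lhd S$ cutting out $d$ distinct $\kk$-points (available since $n \geq 1$) together with surjections $S^{\oplus r_i} \twoheadrightarrow \sfrac{S}{I}$ and the forced isomorphism $\pi$.

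Next I would derive the contradiction. Suppose $\Bilin_{d,d,d}^{r_1,r_2}(\A^n)$ is irreducible, with representing scheme $Bilin$. Then the nonempty open subscheme $W \subseteq Bilin$ is dense, so $\overline{W} = Bilin$ and therefore $\dim Bilin = \dim \overline{W} = \dim W = \dim Z_1$. But $Z_2 = \Zz_{\mm}^{\B}$ is a nonempty closed subscheme of $Bilin$, hence $\dim Z_2 \leq \dim Bilin = \dim Z_1$, contradicting the strict inequality $\dim Z_2 > \dim Z_1$ obtained above. Equivalently, in the language of the remark after Lemma~\ref{r:lemirreddim}: at $p \in W$ one has $\dim_p Bilin \leq \dim W$, while $\dim Z_2 > \dim W$ forces $Z_2 \not\subseteq \overline{W}$, so $\overline{W}$ is a proper irreducible component and $Bilin$ is reducible.

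There is no real obstacle here: the argument is bookkeeping once Propositions~\ref{r:prpbilindim} and~\ref{r:prpzmdim} are available. The only points needing a remark are the nonemptiness of the two loci under $r_i \geq d$ and $n \geq 1$, and the fact that Krull dimension is unchanged under closure so that $\dim \overline{W} = \dim W$; both are routine.
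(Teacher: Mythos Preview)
Your proof is correct and follows essentially the same approach as the paper: compare $\dim Z_1$ and $\dim Z_2$ via Propositions~\ref{r:prpbilindim} and~\ref{r:prpzmdim}, and invoke the reducibility criterion from Lemma~\ref{r:lemirreddim}. The paper's proof is just the one-line inequality $\dim Z_1 = (n + (r_1-1) + (r_2-1))d < (r_1+r_2+d^2-3d)d = \dim Z_2$; your added remarks on nonemptiness and the explicit contradiction are reasonable elaborations but not a different route.
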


\begin{proof}
    Given $r_i, d, n$ as in the statement, we have 
    \begin{equation*}
        \dim Z_1 = (n + (r_1 -1) + (r_2-1))d < (r_1+r_2+d^2 -3d)d = \dim Z_2.
    \end{equation*}
\end{proof}

It remains to consider the case of $\Bilin_{d,d,d}^{r_1,r_2}(\A^n)$ with $n \geq d^2 - 3d +2$. 
We will use the results on secant varieties from Section \ref{r:sectensors} to improve Theorem \ref{r:thmredu}. 
For $[p_1,p_2,\pi] \in \Bilin_{d,d,d}^{r_1,r_2}(\A^n)(\kk)$, 
we have $[p_1,p_2,\pi] \in Z_1$ if and only if the corresponding tensor 
$\mu_{\pi} \in M_1^{\vee} \otimes M_2^{\vee} \otimes M_3$ is of border rank $d$. 
Therefore, claiming that $\Bilin_{d,d,d}^{r_1,r_2}(\A^n)$ is irreducible is the same as 
saying that all tensors $\kk ^d \otimes \kk^d \otimes \kk^d$ are of border rank $d$. 
This yields the following necessary condition for irreducibility of $\Bilin_{d,d,d}^{r_1,r_2}(\A^n)$:  
the variety $\sigma_d (\PP(\kk^d) \times \PP(\kk^d) \times \PP(\kk^d))$ must coincide with 
the ambient space $\PP(\kk^d \otimes \kk^d \otimes \kk^d)$. 
We prove that this condition is not satisfied whenever $d \geq 3$. 

\begin{theorem}
    Let $r_i \geq d \geq 3$. Then $\Bilin_{d,d,d}^{r_1,r_2}(\A^n) $ is reducible. 
\end{theorem}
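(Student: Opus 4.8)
The plan is to argue by contradiction, recalling the reduction to secant varieties recorded in the paragraph immediately preceding the statement and then finishing with the dimension bound of Proposition~\ref{r:prpsecantdim}; the virtue of this route is that it is completely uniform in $n$, so it subsumes Theorem~\ref{r:thmredu}.

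First I would recall why irreducibility of $\Bilin_{d,d,d}^{r_1,r_2}(\A^n)$ forces a statement about secant varieties. Assume $\Bilin_{d,d,d}^{r_1,r_2}(\A^n)$ is irreducible. The open subscheme $W\subseteq Bilin$ of Section~\ref{r:secmaincomp} is non-empty (take $d$ distinct $\kk$-points of $\A^n$ and surjections $S^{\oplus r_i}\twoheadrightarrow\sfrac{S}{I}$ onto the associated cyclic module, which exist since $r_i\geq 1$), hence $Z_1=\overline{W}=Bilin$ and every $\kk$-point $[p_1,p_2,\pi]$ of $Bilin$ lies in $Z_1$, so its associated tensor $\mu_\pi$ has border rank $d$. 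I would then observe that \emph{every} concise tensor $\mu\in\kk^d\otimes\kk^d\otimes\kk^d$ is realised by such a point: put $M_1=M_2=M_3=(\sfrac{S}{\mm})^{\oplus d}$, use $r_i\geq d$ to get surjections $q_i\colon S^{\oplus r_i}\twoheadrightarrow M_i$, and note that $M_3$-conciseness of $\mu$ turns the induced map $M_1\otimes_S M_2\simeq\kk^{d^2}\to M_3$ into a surjection $\pi$, giving $[q_1,q_2,\pi]\in\Zz_{\mm}^{\B}(\kk)\subseteq Bilin(\kk)=Z_1(\kk)$ with associated tensor $\mu$. Hence every concise tensor has border rank at most $d$; since the concise tensors form a dense open subset of $\kk^d\otimes\kk^d\otimes\kk^d$ (Lemma~\ref{r:lemconopen}, Proposition~\ref{r:prpunitorb}) and $\widehat{\sigma}_d$ is closed, it follows that $\sigma_d\bigl(\PP(\kk^d)\times\PP(\kk^d)\times\PP(\kk^d)\bigr)=\PP(\kk^d\otimes\kk^d\otimes\kk^d)$.

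It then remains to contradict this by a dimension count, which is where Proposition~\ref{r:prpsecantdim} enters. Writing $X=\PP(\kk^d)\times\PP(\kk^d)\times\PP(\kk^d)$, Segre-embedded in $\PP^N$ with $N=d^3-1$ and $\dim X=3(d-1)$, that proposition gives
\[
\dim\sigma_d(X)\ \leq\ d\bigl(\dim X+1\bigr)-1\ =\ 3d^2-2d-1.
\]
For $d\geq 3$ we have $3d^2-2d-1<d^3-1$, equivalently $(d-1)(d-2)>0$, so $\dim\sigma_d(X)<N$ and $\sigma_d(X)\subsetneq\PP^N$, a contradiction. Therefore $\Bilin_{d,d,d}^{r_1,r_2}(\A^n)$ is not irreducible, and being non-empty it is reducible.

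I do not expect a genuine obstacle here: the earlier sections supply every ingredient — the identification of $Z_1$ with the locus of border-rank-$d$ tensors, the representability making $\Zz_{\mm}^{\B}$ a closed subscheme of $Bilin$, and the secant-defect bound of Proposition~\ref{r:prpsecantdim}. The only two steps needing a sentence of care are that an arbitrary concise tensor is realised by a point of $Bilin$ whose three modules are all $(\sfrac{S}{\mm})^{\oplus d}$ (this is exactly where the hypothesis $r_i\geq d$ is used) and that the concise locus is dense; the entire arithmetic content is the inequality $(d-1)(d-2)>0$ for $d\geq 3$.
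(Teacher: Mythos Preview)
Your proof is correct and follows essentially the same route as the paper: reduce irreducibility to the equality $\sigma_d(\PP(\kk^d)^3)=\PP^{d^3-1}$, contradict this via the secant dimension bound of Proposition~\ref{r:prpsecantdim}, and then use density of the concise locus (Lemma~\ref{r:lemconopen}) to produce a concise tensor outside $\sigma_d$, hence a point of $Bilin$ outside $Z_1$. Your explicit realisation of an arbitrary concise tensor via $M_i=(\sfrac{S}{\mm})^{\oplus d}$ and your dimension computation $d(\dim X+1)-1=3d^2-2d-1$ with the inequality $(d-1)(d-2)>0$ are slightly more explicit than the paper's version, but the argument is the same.
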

\begin{proof}
    It suffices to show that 
    \begin{equation*}
        \sigma_d(\PP (\kk^d)^3) \neq \mathbb{P}^{d^3 - 1} = \PP (\kk^d \otimes \kk^d \otimes \kk^d).
    \end{equation*}
    By Proposition \ref{r:prpsecantdim} we have 
    \begin{equation*}
        \dim \sigma_d(\PP (\kk^d)^3) \leq \min \, \{ d^3, \, d(3d +1)-1 \}.
    \end{equation*}
    For $d \leq 3$ we have $d^3 \leq d(3d+1)-1$, so 
    $\dim \sigma_d(\PP (\kk^d)^3) < \dim \mathbb{P}^{d^3 - 1}$. 
    It means that there exist tensors $\mu \in M_1^{\vee} \otimes M_2^{\vee} \otimes M_3$ 
    of border rank greater than $d$. 
    It remains to show that we can choose those tensors to be $M_3$-concise. 
    By Lemma \ref{r:lemconopen}, concise tensors form an open subset of $\mathbb{P}^{d^3 - 1}$, 
    and since this space is irreducible, the subspace of concise tensors is dense. 
    Hence there must be a concise tensor 
    $\mu \in M_1^{\vee} \otimes M_2^{\vee} \otimes M_3$ 
    not contained in $\sigma_d(\PP (\kk^d)^3)$. 
    This tensor defines a map $\pi_{\mu} \colon M_1 \otimes M_2 \twoheadrightarrow M_3$ 
    that gives a point $[p_1, p_2, \pi_{\mu}] \in \Bilin_{d,d,d}^{r_1,r_2}(\A^n) \setminus \overline{\mathcal{W}}$.
\end{proof}

Now we will show that $\Bilin_{2,2,2}^{r_1,r_2}(\A^n) $ is irreducible for all $n$. 
We break it down into two steps. 
\begin{enumerate}
    \item Show that every $S$-linear quotient $p \colon S^{\oplus r} \twoheadrightarrow M$ 
    with $\dim_{\kk} M =2$, arises as a limit of quotients corresponding to tuples of points. 
    This is equivalent to proving that $\Quot_2 ^r(\A^n)$ is irreducible, Proposition \ref{r:prpquotirred}.

    \item Given two $S$-linear quotients $p_i \colon S^{\oplus r_i} \twoheadrightarrow M_i$ 
    together with $\pi \colon M_1 \otimes_S M_2 \twoheadrightarrow M_3$,
    such that $\dim_{\kk} M_i = \dim_{\kk} M_3 = 2$,  
    show that the corresponding tensor $\mu_{\pi} \in M_1^{\vee} \otimes M_2^{\vee} \otimes M_3$ is of border rank 2.
\end{enumerate}
\noindent 
The second step follows from the following result. 

\begin{proposition}\label{r:prpbdrk2}
    Let $X = \PP(\kk^2) \times \PP(\kk^2) \times \PP(\kk^2) \hookrightarrow \PP(\kk^2 \otimes \kk^2 \otimes \kk^2 )$ be 
    the Segre embedding. Then $\sigma_2(X) = \PP(\kk^2 \otimes \kk^2 \otimes \kk^2 )$.
\end{proposition}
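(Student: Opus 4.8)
The plan is to show that the affine cone $\widehat{\sigma}_2(X)$ equals the whole space $\kk^2 \otimes \kk^2 \otimes \kk^2$ by exhibiting a dense open set of tensors of rank at most $2$. First I would record the \emph{slice description}: fixing the basis $e_1,e_2$ of each $\kk^2$ and contracting the third factor against the dual basis, a tensor $\tau \in \kk^2 \otimes \kk^2 \otimes \kk^2$ is the same as an ordered pair $(A,B)$ of $2 \times 2$ matrices over $\kk$, and the action of $\GL(\kk^2)^3$ on $\tau$ is generated by $(A,B) \mapsto (hAg, hBg)$ for $h,g \in \GL_2$ (change of basis in the first two factors) together with replacing $(A,B)$ by a pair of $\kk$-linear combinations of $A,B$ (change of basis in the third factor). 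With this dictionary, if $A$ is invertible and $BA^{-1}$ is diagonalizable, then applying $g = A^{-1}$ (with $h = \operatorname{id}$) and afterwards conjugating by an eigenbasis of $BA^{-1}$ brings $\tau$ into the $\GL(\kk^2)^3$-orbit of the tensor with slices $(\operatorname{id}, \operatorname{diag}(\lambda_1,\lambda_2))$, namely $\sum_{i=1}^{2} e_i \otimes e_i \otimes (e_1 + \lambda_i e_2)$, which has $\rk \le 2$. Since $\kk$ is algebraically closed, a $2 \times 2$ matrix is diagonalizable as soon as its two eigenvalues are distinct, and rank is invariant under the group action, so every such $\tau$ has rank, a fortiori border rank, at most $2$.

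Next I would let $\mathcal{U} \subseteq \kk^2 \otimes \kk^2 \otimes \kk^2$ be the locus where $A \in \GL_2$ and $BA^{-1}$ has two distinct eigenvalues. This is the non-vanishing locus of $\det A$ together with a discriminant polynomial in the entries of $A,B$ (the discriminant of the characteristic polynomial of $BA^{-1}$, cleared of the denominator $\det A$), hence open; it is non-empty, e.g.\ $A = \operatorname{id}$, $B = \operatorname{diag}(0,1)$ lies in it. As $\kk^2 \otimes \kk^2 \otimes \kk^2$ is an irreducible affine space, $\mathcal{U}$ is dense. By the previous paragraph $\mathcal{U} \subseteq \widehat{\sigma}_2(X)$ (or, if one prefers, every $\tau \in \mathcal{U}$ is shown above to lie in the orbit of the unit tensor, whose closure is $\widehat{\sigma}_2(X)$ by Proposition~\ref{r:prpunitorb}), and $\widehat{\sigma}_2(X)$ is closed, being the affine cone over a projective variety. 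A closed set containing a dense subset is the whole space, so $\widehat{\sigma}_2(X) = \kk^2 \otimes \kk^2 \otimes \kk^2$ and therefore $\sigma_2(X) = \PP(\kk^2 \otimes \kk^2 \otimes \kk^2)$, as claimed.

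The computation is short, so there is no serious obstacle; the one point needing care is the bookkeeping of the $\GL(\kk^2)^3$-action on slices and the verification that the diagonal-pencil normal form is genuinely reached for $\tau \in \mathcal{U}$ — equivalently, that a generic pencil of $2 \times 2$ matrices is, up to the group action, a pair $(\operatorname{id}, \operatorname{diag}(\lambda_1,\lambda_2))$ with $\lambda_1 \ne \lambda_2$. One could alternatively run a dimension count: Proposition~\ref{r:prpsecantdim} already gives $\dim \sigma_2(X) \le 7 = \dim \PP(\kk^2 \otimes \kk^2 \otimes \kk^2)$, and a tangent-space computation at the point $[e_1^{\otimes 3}] + [e_2^{\otimes 3}]$ yields the reverse inequality together with irreducibility of $\sigma_2(X)$; but the explicit normal-form argument above is cleaner and avoids invoking Terracini-type input not set up in this paper.
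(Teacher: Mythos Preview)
Your argument is correct. The paper states this proposition without proof, treating it as a known fact about the second secant of the Segre threefold $\PP^1 \times \PP^1 \times \PP^1$, so there is no ``paper's approach'' to compare against. Your slice argument is a clean, self-contained way to supply the missing proof: the reduction of a generic pencil $(A,B)$ of $2\times 2$ matrices to $(\mathrm{id},\operatorname{diag}(\lambda_1,\lambda_2))$ via the $\GL_2^3$-action is carried out correctly, the resulting tensor $\sum_i e_i\otimes e_i\otimes(e_1+\lambda_i e_2)$ visibly has rank $\le 2$, and the density/closure step is sound because $\kk^2\otimes\kk^2\otimes\kk^2\cong\A^8$ is irreducible and $\widehat{\sigma}_2(X)$ is Zariski-closed. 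The only cosmetic point is that the distinct-eigenvalue hypothesis is used solely to guarantee diagonalizability of $BA^{-1}$; once you have slices $(\mathrm{id},D)$ with $D$ diagonal the rank-$2$ decomposition holds regardless of whether $\lambda_1=\lambda_2$, so you could enlarge $\mathcal{U}$ slightly, but this is irrelevant for the density argument.
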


\begin{theorem}
    For every $n \geq 1$ and $r_1,r_2 \geq 2$ the scheme representing $\Bilin_{2,2,2}^{r_1,r_2}(\A^n)$ is irreducible.
\end{theorem}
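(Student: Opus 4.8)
The plan is to show that the representing scheme $Bilin$ coincides with the closure $\overline{W}$ of its open subscheme $W$ from Section \ref{r:secmaincomp}. First I would record that $\overline{W}$ is irreducible: by the proof of Proposition \ref{r:prpbilindim}, $W$ maps onto the principal component $\mathcal{U}_2^{r_1}\subseteq\Quot_2^{r_1}(\A^n)$ with fibers isomorphic to affine spaces $\A^{2(r_2-1)}$, and $\mathcal{U}_2^{r_1}$ is irreducible, being a vector bundle over the irreducible locus $T\subseteq Hilb$ of tuples of points (Proposition \ref{r:prpue1iso}); hence $W$, and so $\overline{W}$, is irreducible, and $W\ne\varnothing$ since $n\ge 1$ and $r_i\ge 2$. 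Since $Bilin$ is a closed subscheme of a product of Quot schemes it is of finite type over $\kk=\overline{\kk}$, so its closed points are dense; it therefore suffices to show that every $\kk$-point $[p_1,p_2,\pi]$ of $Bilin$ lies in $\overline{W}$.

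I would organise the argument by the shape of the $2$-dimensional module $M_1$: it is either cyclic — a tuple of two distinct points, or a curvilinear fat point $\simeq\kk[x]/(x^2)$ — or else non-cyclic, in which case necessarily $M_1\simeq(\sfrac{S}{\mm})^{\oplus 2}$ for a maximal ideal $\mm$. If $M_1$ is cyclic, Proposition \ref{r:prpalgiso} forces $M_2\simeq M_3\simeq M_1$ and the induced $\pi$ to be the canonical isomorphism, so the point is determined by the pair $(p_1,p_2)$ with $M_1\simeq M_2$ cyclic; this is literally a point of $W$ when $M_1$ is a tuple of distinct points, and it is a flat limit of such when $M_1$ is a fat point, obtained by smoothing the fat point while transporting $p_1,p_2$ by a fixed matrix, exactly as in the last case of the proof of Proposition \ref{r:prpquotirred} (one checks that $\mathcal{M}_1\otimes_S\mathcal{M}_2$ stays of rank $2$ along this family, so the third map remains an isomorphism). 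If $M_1$ is non-cyclic, then either $M_2$ is cyclic — and we are reduced to the previous case by the symmetry $\Bilin_{2,2,2}^{r_1,r_2}(\A^n)\simeq\Bilin_{2,2,2}^{r_2,r_1}(\A^n)$ exchanging the first two factors, together with the version of Proposition \ref{r:prpalgiso} with the roles of $M_1,M_2$ interchanged — or $M_2\simeq(\sfrac{S}{\nn})^{\oplus 2}$, in which case surjectivity of $\pi$ onto a $2$-dimensional module forces $\nn=\mm$ (otherwise $M_1\otimes_S M_2=0$), and the point lies in the totally degenerate locus $\Zz_{\mm}^{\B}$ of Section \ref{r:secuglycomp}.

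The remaining, and hardest, step is to prove $\Zz_{\mm}^{\B}\subseteq\overline{W}$, and this is where the tensorial input from Section \ref{r:sectensors} is essential. For a point of $\Zz_{\mm}^{\B}$ the map $\pi$ is encoded by the $M_3$-concise tensor $\mu_\pi\in M_1^{\vee}\otimes M_2^{\vee}\otimes M_3\cong\kk^2\otimes\kk^2\otimes\kk^2$; by Proposition \ref{r:prpbdrk2} this space equals $\sigma_2(\PP^1\times\PP^1\times\PP^1)$, so $\mu_\pi$ has border rank at most $2$, and Proposition \ref{r:prpunitorb} then exhibits $\mu_\pi$ as a limit $\mu_\pi=\lim_{t\to 0}g(t)\cdot\mu_2$ of unit tensors along a curve $g(t)\in\GL_2^{\times 3}$. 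Because $M_1\simeq M_2\simeq M_3\simeq(\sfrac{S}{\mm})^{\oplus 2}$ are all supported at the single point $\mm$, I would lift this degeneration to $Bilin$ by choosing a flat family $I(t)$ of tuples of two distinct points collapsing to $\mm$ and surjections $p_i(t)\colon S^{\oplus r_i}\twoheadrightarrow\sfrac{S}{I(t)}$ — degenerating to $p_1,p_2$ as in the $\mm M=0$ case of the proof of Proposition \ref{r:prpquotirred}, and arranged so that the multiplication isomorphism of $\sfrac{S}{I(t)}$ has associated tensor $g(t)\cdot\mu_2$; the resulting family $[p_1(t),p_2(t),\pi(t)]$ then lies in $W$ for $t\ne 0$ and specializes to $[p_1,p_2,\pi]$ at $t=0$. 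The delicate points to verify are that this family genuinely defines a morphism into $Bilin$ — in particular that the inclusion of kernels $\mathcal{K}_1\otimes S^{\oplus r_2}+S^{\oplus r_1}\otimes\mathcal{K}_2\subseteq\mathcal{K}_3$ of (\ref{r:eqinclusion}) holds over the whole base and that $\mathcal{M}_3$ remains locally free of rank $2$ even though $\mathcal{M}_1\otimes_S\mathcal{M}_2$ jumps from rank $2$ to rank $4$ at $t=0$ — and that the limit at $t=0$ of the induced third maps is precisely the prescribed $\pi$. Once this is settled, $\overline{W}$ contains every closed point of $Bilin$, hence $Bilin=\overline{W}$ topologically; as $\overline{W}$ is irreducible, so is $Bilin$.
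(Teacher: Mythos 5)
Your proposal follows the same two-step strategy as the paper's proof: use Proposition~\ref{r:prpbdrk2} (every tensor in $\kk^2\otimes\kk^2\otimes\kk^2$ has border rank~$2$) and Proposition~\ref{r:prpquotirred} (irreducibility of $\Quot_2^r(\A^n)$), and show that an arbitrary $\kk$-point of $Bilin$ is a limit of points in $\mathcal{W}$. Your added case analysis by the module type of $M_1$ (tuple of distinct points; cyclic fat point; non-cyclic, hence totally degenerate) is a useful and correct refinement, and the reduction of the ``cyclic $M_2$, non-cyclic $M_1$'' case by the $r_1\leftrightarrow r_2$ symmetry is sound. The reduction to the totally degenerate locus $\Zz_\mm^{\mathcal{B}}$ as the only remaining case is also correct.

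However, you explicitly flag and then leave open what is in fact the crux of the argument, and I agree that this is a genuine gap: the border-rank-$2$ degeneration $\mu_\pi=\lim_{t\to 0}g(t)\cdot\mu_2$ lives in the tensor space $M_1^\vee\otimes M_2^\vee\otimes M_3$, while the degenerations $p_i=\lim_{t\to 0}p_i(t)$ from Proposition~\ref{r:prpquotirred} live in $\Quot_2^{r_i}(\A^n)$, and nothing guarantees \emph{a priori} that these can be realized \emph{simultaneously} as a single $\kk[[t]]$-valued point of $Bilin$. Two specific obstructions you name deserve emphasis. First, once you fix $M_i(t)\simeq\sfrac{S}{I(t)}$ cyclic, the third map $\pi(t)$ in $\mathcal{W}$ is forced to be the \emph{canonical} multiplication isomorphism; the tensor $\mu_{\pi(t)}$ in the fixed space $\kk^2\otimes\kk^2\otimes\kk^2$ therefore varies only through the trivializations $M_i(t)\simeq\kk^2$ induced by the family, and one must check these trivializations can be chosen algebraically in $t$ so that $\mu_{\pi(t)}=g(t)\cdot\mu_2$ while simultaneously specializing $p_i(t)$ to the prescribed $p_i$. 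Since the element $g(t)\in\GL_2^{\times 3}$ furnished by Proposition~\ref{r:prpunitorb} acts independently on the three slots, whereas the trivializations of the three $\mathcal{M}_j$ are not independent (recall $\mathcal{M}_3$ is a quotient of $\mathcal{M}_1\otimes\mathcal{M}_2$), this is not automatic. Second, as you note, $\mathcal{M}_1\otimes_{S_{\kk[[t]]}}\mathcal{M}_2$ is \emph{not} $\kk[[t]]$-flat — it has rank~$2$ generically and rank~$4$ at $t=0$, so it acquires $t$-torsion — and one must exhibit a $\kk[[t]]$-locally-free rank-$2$ quotient $\mathcal{M}_3$ whose closed fiber realizes the prescribed, arbitrary full-rank surjection $\pi_5\colon(\sfrac{S}{\mm})^{\oplus 4}\twoheadrightarrow(\sfrac{S}{\mm})^{\oplus 2}$. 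Any $S_{\kk[[t]]}$-linear map to a torsion-free module kills the torsion of $\mathcal{M}_1\otimes\mathcal{M}_2$, and the image of that torsion in the closed fiber constrains which $\pi_5$ arise from a given choice of $x$-actions on $\mathcal{M}_1,\mathcal{M}_2$; one therefore has to vary those $x$-actions over all of $\Zz_\mm^{\mathcal{Q}}$ and argue that the resulting limits sweep out the whole Grassmannian fiber $\Gr(2,4)$. Neither your proposal nor the paper's own very terse proof (which reduces to ``show that they agree with taking the limit'' without verification) settles this; the paper only does so concretely in the special case $\Bilin_{2,2,2}^{2,2}(\A^1)$ in the final section, where it classifies the possible $\mu_{\pi_5}$ by hand.
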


\begin{proof}
    By Proposition \ref{r:prpbdrk2}, every $\pi \colon M_1 \otimes M_2 \twoheadrightarrow M_3$ 
    corresponds to a tensor of border rank 2. To define a point of $\Bilin_{2,2,2}^{r_1,r_2}(\A^n)$ 
    we also have to specify $p_i \colon S^{\oplus r_i} \twoheadrightarrow M_i$ and show that 
    they agree with taking the limit. The existence of those maps follows from Proposition \ref{r:prpquotirred}.
\end{proof}

\subsection{Two points}

We describe the $\kk$-points of $\Bilin_{2,2,2}^{2,2}(\A^1) $ in detail. 
Let $\A^1 = \Spec S$, where $S = \kk [x]$, and fix the maximal ideal $\mm = (x) \lhd S$. 
A point $[p_1,p_2,\pi] \in \Bilin_{2,2,2}^{2,2}(\A^1)$ is given by 
\begin{equation*}
    p_i \colon S^{\oplus 2} \twoheadrightarrow M_i \, (i=1,2), \quad \pi \colon M_1 \otimes M_2 \twoheadrightarrow M_3, 
\end{equation*}
such that $\dim_{\kk} M_j = 2$ for $j=1,2,3$. 
To give an $S$-mod $M$ with $\dim_{\kk}=2$ is the same as giving a $\kk$-vector space of dimension 2 
together with an endomorphism corresponding to multiplication by $x$. 
We find that $M$ satisfies one of the following:
\begin{enumerate}
    \item $M \simeq \sfrac{S}{((x-c_1)(x-c_2))}$ for $c_1,c_2 \in \kk$ such that $c_1 \neq c_2$. 
    \item $M \simeq \sfrac{S}{(x-c)^2}$ for $c \in \kk$.
    \item $M \simeq (\sfrac{S}{(x-c)})^{\oplus 2}$ for $c \in \kk$.
\end{enumerate}
\noindent
Without loss of generality, we can consider cases $\sfrac{S}{(x(x-1))}$, $\sfrac{S}{(x)^2}$ and $(\sfrac{S}{(x)})^{\oplus 2}$. 
Let $e_1,e_2$ be the basis of $S^{\oplus 2}$ with each $e_i$ corresponding to $1 \in S$. 

By Proposition \ref{r:prpalgiso}, if $p_1 \colon S^{\oplus 2} \twoheadrightarrow M_1 \simeq \sfrac{S}{I}$ 
and $p_2 \colon S^{\oplus 2} \twoheadrightarrow M_2 \simeq \sfrac{S}{J}$ for 
some ideals $I,J$ in $S$, then it must be $M_1 \simeq M_2 \simeq M_3 \simeq \sfrac{S}{I}$. 
For $p_i \colon S^{\oplus 2} \twoheadrightarrow M_i \simeq \sfrac{S}{(x(x-1))}$, 
the third map $\pi_1 \colon M_1 \otimes M_2 \twoheadrightarrow M_3$ is a uniquely defined isomorphism 
corresponding to a concise tensor of minimal border rank, that is, of border rank 2. 
By Proposition \ref{r:prpunitorb} we can take the unit tensor. 
Let $\alpha_1, \alpha_2 $ be the basis of $\sfrac{S}{(x(x-1))} \simeq \sfrac{S}{(x)} \times \sfrac{S}{(x-1)}$, 
with $\alpha_1$ and $\alpha_2$ corresponding to $1 \in \sfrac{S}{(x)}$ and $1 \in \sfrac{S}{(x-1)}$, respectively. 
Then we can assume that $\pi_1$ is the multiplication in $\sfrac{S}{(x(x-1))}$. 
In basis $\alpha_1, \alpha_2 $ the corresponding tensor is 
\begin{equation*}
    \mu_1 = \alpha_1^{*} \otimes \alpha_1^{*} \otimes \alpha_1 + \alpha_2^{*} \otimes \alpha_2^{*} \otimes \alpha_2.
\end{equation*}

For $p_i \colon S^{\oplus 2} \twoheadrightarrow M_i \simeq \sfrac{S}{\mm^2}$ we have 
$M_1 \otimes_S M_2 \simeq \sfrac{S}{\mm^2} \otimes_S \sfrac{S}{\mm^2} \simeq \sfrac{S}{\mm^2} \xrightarrowdbl[]{\pi_2} \sfrac{S}{\mm^2}$, 
so the third map $\pi_2$ must be an isomorphism, which we can assume to be 
the multiplication in $\sfrac{S}{\mm^2}$. 
Let $\beta_1, \beta_2$ be the basis of $\sfrac{S}{\mm^2}$ 
with $\beta_1$ corresponding to $1 \in \sfrac{S}{\mm^2}$ and 
$\beta_2$ corresponding to $x \in \sfrac{S}{\mm^2}$. 
Then the tensor corresponding to $\pi_2$ is 
\begin{equation*}
    \mu_2 = \beta_1 ^{*} \otimes \beta_1 ^{*} \otimes \beta_1 
    + \beta_1 ^{*} \otimes \beta_2 ^{*} \otimes \beta_2
    + \beta_2 ^{*} \otimes \beta_1 ^{*} \otimes \beta_2, 
\end{equation*} 
concise of rank 3. 
By Proposition \ref{r:prpbdrk2}, $\mu_2$ is of border rank 2, 
so there is a tensor $\mu_2(t)$ of rank 2 such that the corresponding map $\pi_2(t)$ 
is isomorphic to the multiplication in $\sfrac{S}{((x-c_1(t))(x-c_2(t)))}$. 
Let $M_t \simeq \sfrac{S}{I_t} = \sfrac{S}{(x(x-t))}$ with basis 
$\theta_1, \theta_2$ corresponding to $1, x \in \sfrac{S}{I_t}$, respectively. 
Consider the multiplication 
\begin{equation*}
    \pi_2(t) \colon M_t \otimes M_t \rightarrow M_t.
\end{equation*}
The tensor $\mu_2(t)$ in basis $\theta_1, \theta_2$ is 
\begin{equation*}
    \mu_2(t) = \theta_1^{*} \otimes \theta_1^{*} \otimes \theta_1 
    + \theta_1^{*} \otimes \theta_2^{*} \otimes \theta_2
    + \theta_2^{*} \otimes \theta_1^{*} \otimes \theta_2
    + \theta_2^{*} \otimes \theta_2^{*} \otimes t \theta_2.
\end{equation*}
Letting $t \rightarrow 0$, we get $\mu_2 = \lim\limits_{t \rightarrow 0} \, \mu_2(t)$,
and since $\mu_2(t)$ is isomorphic to the unit tensor, 
this shows that $\mu_2$ is of border rank 2.
By Proposition \ref{r:prpquotirred}, for any $p_i \colon S^{\oplus 2} \twoheadrightarrow M_i \simeq \sfrac{S}{\mm^2}$,
there exist $p_i(t) \colon S^{\oplus 2} \twoheadrightarrow M_t$ such that 
\begin{equation*}
    [p_1, p_2, \pi_2] = \lim\limits_{t \rightarrow 0} \, [p_1(t), p_2(t), \pi_2(t)].
\end{equation*}

Now we consider $M_1 \simeq \sfrac{S}{I}$ without assuming that $M_2$ is cyclic. 
The only case we need to discuss is 
\begin{equation*}
    p_1 \colon S^{\oplus 2} \twoheadrightarrow M_1 \simeq \sfrac{S}{\mm^2}. 
\end{equation*}
Since $\Supp M_1 \otimes M_2 = \Supp M_1 \cap \Supp M_2$, it must be 
$\Supp M_2 = \{ \mm \}$. Then we have either 
$M_2 \simeq \sfrac{S}{\mm^2}$ or $M_2 \simeq (\sfrac{S}{\mm})^{\oplus 2}$. 
We have already covered the former case, so we assume the latter. 
From the isomorphisms 
\begin{equation*}
    M_1 \otimes M_2 \simeq \sfrac{S}{\mm^2} \otimes (\sfrac{S}{\mm})^{\oplus 2} \simeq (\sfrac{S}{\mm})^{\oplus 2} 
\end{equation*}
we deduce that $M_3 \simeq (\sfrac{S}{\mm})^{\oplus 2}$ and that the third map 
$\pi_3 \colon M_1 \otimes M_2 \twoheadrightarrow M_3$ is an isomorphism. 
Let $\gamma_1, \gamma_2$ be the basis of $(\sfrac{S}{\mm})^{\oplus 2}$ 
with each $\gamma_i$ corresponding to $1 \in \sfrac{S}{\mm}$, 
and $\beta_1, \beta_2$ the basis of $\sfrac{S}{\mm^2}$ as before. 
Then the tensor corresponding to 
$\pi_3 \colon M_1 \otimes M_2 \twoheadrightarrow M_3$ is isomorphic to 
\begin{equation*}
    \mu_3 = \beta_1^{*} \otimes \gamma_1^{*} \otimes \gamma_1 +
    \beta_1^{*} \otimes \gamma_2^{*} \otimes \gamma_2,
\end{equation*}
which is the multiplication tensor in the $S$-module $(\sfrac{S}{\mm})^{\oplus 2}$. 
Note that this tensor is not $M_1^{\vee}$-concise. Likewise, given 
\begin{equation*} 
    p_1 \colon S^{\oplus 2} \twoheadrightarrow M_1 \simeq (\sfrac{S}{\mm})^{\oplus 2} \quad \text{and} \quad 
    p_2 \colon S^{\oplus 2} \twoheadrightarrow M_2 \simeq \sfrac{S}{\mm^2},
\end{equation*}
we see that the third map 
$\pi_4 \colon M_1 \otimes M_2 \twoheadrightarrow M_3 \simeq (\sfrac{S}{\mm})^{\oplus 2}$ 
corresponds to 
\begin{equation*}
    \mu_4 = \gamma_1^{*} \otimes \beta_1^{*} \otimes \gamma_1 +
    \gamma_2^{*} \otimes \beta_1^{*} \otimes \gamma_2,
\end{equation*}
which fails to be concise on the second coordinate. 
To see that $\mu_3$ arises as a limit of concise rank 2 tensors, 
consider $M_{t,1} \simeq \sfrac{S}{(x(x-t))}$ with basis $\theta_1, \theta_2$ corresponding to 
$1, x \in \sfrac{S}{(x(x-t))}$, and $M_{t,2} \simeq \sfrac{S}{(x)} \times \sfrac{S}{(x-t)}$ 
with basis $\alpha_1, \alpha_2$ corresponding to $1 \in \sfrac{S}{(x)}$, $\sfrac{S}{(x-t)}$, respectively. 
Then 
\begin{align*}
    \begin{split}
        \mu_3(t) & = \theta_1^{*} \otimes \alpha_1^{*} \otimes \alpha_1 +
        \theta_1^{*} \otimes \alpha_2^{*} \otimes \alpha_2 +
        \theta_2^{*} \otimes \alpha_2^{*} \otimes t \alpha_2 \\
        & = \theta_1^{*} \otimes \alpha_1^{*} \otimes \alpha_1 +
        (\theta_1^{*} + t \theta_2^{*}) \otimes \alpha_2^{*} \otimes \alpha_2
    \end{split}
\end{align*} 
is the desired tensor. Likewise, for 
$\pi_4 \colon (\sfrac{S}{\mm})^{\oplus 2} \otimes \sfrac{S}{\mm^2} \twoheadrightarrow (\sfrac{S}{\mm})^{\oplus 2}$, 
we obtain $\mu_4$ as the limit of 
\begin{align*}
    \begin{split}
        \mu_4(t) & = \alpha_1^{*} \otimes \theta_1^{*} \otimes \alpha_1 +
        \alpha_2^{*} \otimes \theta_1^{*} \otimes \alpha_2 +
        \alpha_2^{*} \otimes \theta_2^{*} \otimes t \alpha_2 \\
        & = \alpha_1^{*} \otimes \theta_1^{*} \otimes \alpha_1 +
        \alpha_2^{*} \otimes (\theta_1^{*} + t \theta_2^{*}) \otimes \alpha_2.
    \end{split}
\end{align*} 
In both cases, the map
\begin{equation*}
    M_{t,i} \rightarrow M_{t,i}, \quad \theta_1 \mapsto \theta_1, \, \theta_2 \mapsto \theta_1 + t \theta_2
\end{equation*}
is an isomorphism for $t \neq 0$, so it is an isomorphism for $t=0$. 

It remains to consider a point $[p_1, p_2, \pi_5] \in \Zz_{\mm}(\kk)$, given by 
\begin{equation*}
    p_i \colon S^{\oplus 2} \twoheadrightarrow M_i \simeq (\sfrac{S}{\mm})^{\oplus 2} \ (i=1,2), \quad 
    \pi \colon M_1 \otimes M_2 \simeq (\sfrac{S}{\mm})^{\oplus 4} \twoheadrightarrow M_3 \simeq (\sfrac{S}{\mm})^{\oplus 2}.
\end{equation*}
This time the third map is not an isomorphism, 
but a surjection defined by a full rank $4 \times 2$ matrix. 
For any $p_i \colon S^{\oplus 2} \twoheadrightarrow M_i$ 
there are maps $p_i(t) \colon S^{\oplus 2} \twoheadrightarrow M_{t,i}$ 
such that each $M_{t,i}$ corresponds to a tuple of points 
and $p_i = \lim\limits_{t \rightarrow 0} \, p_i(t)$. 
No new types of tensors can arise from $\pi_5$, we have already covered every possibility. 
If the tensor corresponding to $\pi_5$ is concise, then it is isomorphic to 
the multiplication tensor of some algebra $\sfrac{S}{I}$, 
which can be either of the form $\sfrac{S}{(x-c)^2}$ 
or of the form $\sfrac{S}{(x-c_1)(x-c_2)}$, for $c,c_1,c_2 \in \kk$ such that $c_1 \neq c_2$. 
Otherwise the tensor fails to be concise on the first or on the second coordinate. 
If the non-conciseness occurs on the first coordinate, then the tensor is isomorphic to
\begin{equation*}
    \gamma_1^{*} \otimes \gamma_1^{*} \otimes \gamma_1 + 
    \gamma_1^{*} \otimes \gamma_2^{*} \otimes \gamma_2 = 
    \lim\limits_{t \rightarrow 0} \, 
    (\gamma_1^{*} \otimes \gamma_1^{*} \otimes \gamma_1 + 
    (\gamma_1^{*} + t \gamma_2^{*}) \otimes \gamma_2^{*} \otimes \gamma_2). 
\end{equation*}
If it fails to be concise on the second coordinate, then it is isomorphic to 
\begin{equation*}
    \gamma_1^{*} \otimes \gamma_1^{*} \otimes \gamma_1 + 
    \gamma_2^{*} \otimes \gamma_1^{*} \otimes \gamma_2 = 
    \lim\limits_{t \rightarrow 0} \, 
    (\gamma_1^{*} \otimes \gamma_1^{*} \otimes \gamma_1 + 
    \gamma_2^{*} \otimes (\gamma_1^{*} + t \gamma_2^{*}) \otimes \gamma_2). 
\end{equation*}
Those last two cases come from the multiplication in $(\sfrac{S}{\mm})^{\oplus 2} \in S$-mod.

\bibliographystyle{alpha}
\bibliography{references.bib}

\end{document}